\begin{document}

% define theorem environments
\newtheorem{theorem}{Theorem}    %[section]
\newtheorem{proposition}[theorem]{Proposition}
\newtheorem{conjecture}[theorem]{Conjecture}
\def\theconjecture{\unskip}
\newtheorem{corollary}[theorem]{Corollary}
\newtheorem{lemma}[theorem]{Lemma}
\newtheorem{observation}[theorem]{Observation}
\theoremstyle{definition}
\newtheorem{definition}{Definition}
\newtheorem{notation}[definition]{Notation}
\newtheorem{remark}[definition]{Remark}
\newtheorem{question}[definition]{Question}
\newtheorem{example}[definition]{Example}
\newtheorem{problem}[definition]{Problem}
\newtheorem{exercise}[definition]{Exercise}

\numberwithin{theorem}{section}
\numberwithin{definition}{section}
\numberwithin{equation}{section}

\def\earrow{{\mathbf e}}
\def\uarrow{{\mathbf u}}

\def\reals{{\mathbb R}}
\def\torus{{\mathbb T}}
\def\heis{{\mathbb H}}
\def\integers{{\mathbb Z}}
\def\complex{{\mathbb C}\/}
\def\naturals{{\mathbb N}\/}
\def\distance{\operatorname{distance}\,}
\def\dist{\operatorname{dist}\,}
\def\Span{\operatorname{span}\,}
\def\degree{\operatorname{degree}\,}
\def\kernel{\operatorname{kernel}\,}
\def\dim{\operatorname{dim}\,}
\def\codim{\operatorname{codim}}
\def\trace{\operatorname{trace\,}}
\def\Span{\operatorname{span}\,}
\def\ZZ{ {\mathbb Z} }
\def\p{\partial}
\def\rp{{ ^{-1} }}
\def\Re{\operatorname{Re\,} }
\def\Im{\operatorname{Im\,} }
\def\ov{\overline}
\def\eps{\varepsilon}
\def\lt{L^2}
\def\diver{\operatorname{div}}
\def\curl{\operatorname{curl}}
\def\etta{\eta}
\newcommand{\norm}[1]{ \|  #1 \|}
\def\Span{\operatorname{span}}

\newcommand{\Norm}[1]{ \left\|  #1 \right\| }
\newcommand{\set}[1]{ \left\{ #1 \right\} }

\def\scriptf{{\mathcal F}}
\def\scriptg{{\mathcal G}}
\def\scriptm{{\mathcal M}}
\def\scriptb{{\mathcal B}}
\def\scriptc{{\mathcal C}}
\def\scriptt{{\mathcal T}}
\def\scripti{{\mathcal I}}
\def\scripte{{\mathcal E}}
\def\scriptv{{\mathcal V}}
\def\frakv{{\mathfrak V}}

\begin{comment}
\def\scriptx{{\mathcal X}}
\def\scriptj{{\mathcal J}}
\def\scriptr{{\mathcal R}}
\def\scripts{{\mathcal S}}
\def\scripta{{\mathcal A}}
\def\scriptk{{\mathcal K}}
\def\scriptd{{\mathcal D}}
\def\scriptp{{\mathcal P}}
\def\scriptl{{\mathcal L}}
\def\scriptv{{\mathcal V}}
\def\scripth{{\mathcal H}}
\def\scriptn{{\mathcal N}}
\def\frakg{{\mathfrak g}}
\def\frakG{{\mathfrak G}}
\def\boldn{\mathbf N}
\end{comment}

\author{Michael Christ}
\address{
        Michael Christ\\
        Department of Mathematics\\
        University of California \\
        Berkeley, CA 94720-3840, USA}
\email{mchrist@math.berkeley.edu}
\thanks{The author was supported in part by NSF grant DMS-040126}

\date{September 14, 2005. Revised June 3, 2011.}

\title
{Quasiextremals for a Radon-like Transform}

\maketitle
%{\Small \tableofcontents}

%\begin{abstract}
%\end{abstract}

\section{Introduction}
\subsection{An operator}
The object of our investigation is
the linear operator
$T$, mapping functions defined on $\reals^d$
to functions defined on $\reals^d$, defined by
\begin{equation}
\label{parabolaoperator}
Tf(x) = \int_{\reals^{d-1}}f(x'-t,x_d-|t|^2)\,dt
\end{equation}
where $x=(x',x_d)\in\reals^{d-1}\times\reals^1$.
$T$ is one of the most basic examples of a quite broad class
of generalized Radon transforms, and more generally, of Fourier
integral operators. 
These generalized Radon transforms take the form 
\begin{equation} \label{generalRadon}
Tf(x) = \int_{\scriptm_x}f(y)\,d\sigma_x(y),
\end{equation}
where for each $x$ in some ambient manifold,
each set $\scriptm_x$ is a smooth submanifold of a second ambient manifold,
$\scriptm_x$ varies smoothly with $x$,
and $\sigma_x$ is a smooth multiple of the induced surface measure
on $\scriptm_x$. A transversality hypothesis is also imposed,
guaranteeing that the transpose of $T$ is 
a generalized Radon transform in the same sense.

As is well known,
the particular operator
$T$ defined by \eqref{parabolaoperator}
maps $L^{(d+1)/d}(\reals^d)$ boundedly to $L^{d+1}(\reals^d)$,
but does not map
$L^p$ boundedly to $L^q$ for any other exponents $p,q$.
The localized operator
\begin{equation}
T_0f(x) = \int_{|t|\le 1} f(x'-t,x_d-|t|^2)\,dt,
\end{equation}
with $x$ restricted to a fixed bounded subset of $\reals^d$,
does obey a wider class of $L^p\mapsto L^q$ inequalities,
but all of them are consequences of this most
basic inequality by interpolation with trivial estimates
and H\"older's inequality.

The 
$L^{(d+1)/d}(\reals^d)\mapsto L^{d+1}(\reals^d)$ inequality,
in much greater generality, was originally
established by arguments relying on $L^2$ smoothing properties,
which in turn were established by Fourier transform
or Fourier integral operator theory.
In this paper we use combinatorial methods to establish
refinements of this inequality.

These refinements are of three types.
\newline
(i) A rough characterization is given of
quasiextremals, by which we mean functions $f$ for which 
$\norm{Tf}_{d+1}/\norm{f}_{(d+1)/d}$ 
is at least a constant multiple of the supremum of
this ratio over all functions. This constant can
be arbitrarily small.
\newline
(ii) 
Theorem~\ref{thm:reformulation} 
asserts that if $f$ is sparsely distributed
in a certain precise sense, then $\norm{Tf}_{d+1}/\norm{f}_{(d+1)/d}$ is small. 
\newline
(iii) It is shown that $T$ maps $L^{(d+1)/d}$
to the Lorentz space $L^{d+1,r}$ for any $r > \frac{d+1}{d}$;
these spaces are strictly smaller than $L^{d+1}$ when $r<d+1$.
The range of $r$ is optimal, except perhaps for
the endpoint case $r=\frac{d+1}{d}$, which remains open.
Underlying this extension is a general functional analytic
framework for passing from restricted weak type inequalities
to strong type, and more general Lorentz type, inequalities.
For such an extrapolation specific additional information, which here
takes the form of a certain multilinear inequality, is
also needed; see Lemma~\ref{lemma:trulymultilinear}.
This formalism has already been exploited by Stovall \cite{betsy}
to prove strong type inequalities for a different class of
Radon-like transforms, for which only restricted weak type
estimates had previously been known.
It has also been applied by Dendrinos, Laghi, and Wright \cite{DLW}
to another related class of transforms.
This formalism does not rely on the characterization of quasiextremals;
it is less specific and hence more flexible.

\medskip
The particular operator \eqref{parabolaoperator} 
is distinguished from others of the 
form \eqref{generalRadon} by the presence of a 
group of associated symmetries of quite high dimension. 
These symmetries are central to
our discussion, and dictate the form of the results.
% The symmetry group acts transitively on a natural ``full'' set of quasiextremals. 

More general operators of the same general class enjoy
fewer symmetries, and the most straightforward extensions
of the main results of this paper to those generalizations
are false. See for example Stovall's characterization \cite{betsyspheres} 
of quasiextremals
for the operator defined by convolution with surface measure on 
a sphere in $\reals^d$. 
The techniques developed here are nonetheless the basis of 
further work \cite{betsyspheres},\cite{betsygeneric} which, with
further ideas, establishes the correct extensions. 

It is natural to ask why the measure $dt$ is employed
in the definition \eqref{parabolaoperator} of $T$, rather than surface measure
on the paraboloid pulled back to $\reals^{d-1}$.
A partial answer is that $dt$ possesses a dilation symmetry
which surface measure lacks. A fuller answer may be found
in the discussion of affine surface measure in \cite{christextremal}.

\subsection{Restricted weak type inequality}
The slightly weaker restricted weak type formulation 
of the $L^{(d+1)/d}\mapsto L^{d+1}$ inequality
says that for any two measurable sets,
\begin{equation} \label{restrictedweaktype}
\langle T(\chi_{E^\star}),\,\chi_E\rangle \lesssim |E|^{d/(d+1)}|E^\star|^{d/(d+1)},
\end{equation}
where $\chi_E$ denotes the characteristic function of $E$.
Combinatorial proofs of \eqref{restrictedweaktype}
have been given in \cite{schlag} and \cite{edinburgh}.

\eqref{restrictedweaktype} has a more geometric interpretation
than does the $L^p$ norm inequality. 
Denote by $\scripti\subset\reals^d\times\reals^d$ the incidence manifold
\begin{equation}
\scripti = \set{(x,y)\in\reals^{d+d}: y_d = x_d - |y'-x'|^2}
\end{equation}
where $x=(x',x_d)$ and $y=(y',y_d)$.
Let $\pi,\pi^{\star}:\scripti\to\reals^d$ be the projections
\begin{equation}
\pi(x,x_\star)=x \text{ and } \pi^{\star}(x,x_\star)=x_\star.
\end{equation}
Then 
\[
\langle T(\chi_{E^\star}),\,\chi_E\rangle 
= c|\scripti\cap(E\times E^\star)|,
\]
where $|\cdot|$ denotes Lebesgue measure on $\scripti$, 
and thus 
$\langle T(\chi_{E^\star}),\,\chi_E\rangle$
represents the continuum number of incidences between $E,E^\star$.

The restricted weak type inequality \eqref{restrictedweaktype}
is sharp not only in the sense that neither exponent $\frac{d}{d+1}$ can be
increased without decreasing the other, but moreover,
for any $t,t_\star>0$
there exist sets $E,E^\star$ satisfying $|E|=t$ and 
$|E^\star|=t_\star$ with
$\langle T(\chi_{E^\star}),\,\chi_E\rangle \ge c|E|^{d/(d+1)}|E^\star|^{d/(d+1)}$
where $c>0$ is independent of $t,t_\star$.
Our refinement will quantify the principle that this inequality
can nonetheless be improved for typical sets.

\subsection{Definition of quasiextremals}
To formulate refinements requires a definition.
\begin{definition}  \label{defn:QEpairs}
Let $\eps\in\reals^+$.
An ordered pair $(E,E^\star)$ of Lebesgue measurable subsets of
$\reals^d$ is said to be $\eps$-quasiextremal 
for the inequality \eqref{restrictedweaktype}
if $0<|E|,|E^\star|<\infty$ and
\begin{equation}
\langle T(\chi_{E^\star}),\,\chi_E\rangle \ge\eps |E|^{d/(d+1)}|E^\star|^{d/(d+1)}.
\end{equation}
\end{definition}
\noindent
We will say simply that $(E,E^\star)$ is $\eps$-quasiextremal.

The first goal of this note
is to identify, in a natural sense, all $\eps$-quasiextremal pairs, 
thereby refining the norm inequalities already known.
This is rather different from the general problem of
identifying all {\em exact} extremals and finding the optimal constants
in the strong type and restricted weak type inequalities, 
concerning which we have nothing to contribute.
Here we are interested in pairs that are extremal merely up to the factor $\eps$.  
There are several natural asymptotic regimes for $\eps$.
The simplest has $\eps$ bounded below, while in the second,
$\eps$ tends to zero; both of these are addressed by our results.
In this paper we obtain no additional information when
$\eps$ approaches, or equals, the optimal constant in the
inequality, but those situations are the topic of a subsequent
work \cite{christextremal}. 

An alternative formulation of quasiextremality is more natural
for more general operators.
For any $t,t^\star>0$ define
\begin{equation*}
\Lambda(t,t_\star)
=
\sup_{|E|=t,\ |E^\star|=t_\star}
t_{\phantom{\star}}^{-d/(d+1)}
t_\star^{-d/(d+1)}
\langle T(\chi_{E^\star}),\,\chi_E\rangle.
\end{equation*}
One could then define an $\eps$-quasiextremal pair by the inequality
\begin{equation*}
\langle T(\chi_{E^\star}),\,\chi_E\rangle
\ge\eps \Lambda(|E|,|E^\star|).
\end{equation*}
For the particular operator \eqref{parabolaoperator},
it turns out that $\Lambda(t,t_\star)\sim t^{d/(d+1)}t_*^{d/(d+1)}$
for all $t,t_\star$.
For the localized operator $T_0$, however, the relationship between these
two alternative notions of quasiextremality is more complicated. 
See the discussion following Theorem~\ref{thm:Lambda} below.

\subsection{A family of quasiextremals}
We first describe a family of quasiextremals,
that is, $\eps$-quasiextremals with $\eps$ bounded below
by a fixed positive constant.

\begin{definition}
\label{balldefinition}
For any point $\bar z=(\bar x,\bar x_\star)\in \scripti$, any $\rho>0$, 
any orthonormal basis $\earrow=\{e_1,\cdots,e_{d-1}\}$ for $\reals^{d-1}$, 
and any $r,r^{\star}\in (\reals^+)^{d-1}$ satisfying
\begin{equation}
\label{dualellipses}
r_jr^{\star}_j=\rho\ \forall\, 1\le j\le d-1
\end{equation}
$\scriptb(\bar z,\earrow,r,r^{\star})$ denotes the set of all
$z=(x,x_\star)\in \scripti$ satisfying all of  
\begin{gather}
\label{firstfirstcoords}
|\langle x'-\bar x',e_j\rangle|<r_j\ \forall\, j,
\\
\label{firstlastcoord}
\big| x_d-(\bar x_\star)_d -|x'-\bar x_\star'|^2\big| <\rho,
\\
\label{secondfirstcoords}
|\langle x_\star'-\bar x_\star',e_j\rangle|<r^{\star}_j\ \forall\, j,
\\
\label{secondlastcoord}
\big| (x_\star)_d - \bar x_d + |x_\star'-\bar x'|^2 \big| <\rho.
\end{gather}
\end{definition}
$\scriptb$ is by definition the intersection of $\scripti$
with a certain Cartesian product $E\times E^\star$, 
whence $\pi(\scriptb)\subset E$ and $\pi^\star(\scriptb)\subset E^\star$.
In fact, $\pi(\scriptb),\pi^\star(\scriptb)$ are essentially equal
to $E,E^\star$; see the proof of Proposition~\ref{prop:trulyQE}
in \S\ref{section:trulyQE}.

\begin{comment}
\eqref{firstlastcoord} implies \eqref{secondlastcoord} (with $\rho$
replaced by $C\rho$ in \eqref{secondlastcoord}),
and conversely, for points $(x,y)\in\scripti$ satisfying 
\eqref{firstfirstcoords} and \eqref{secondfirstcoords}
when \eqref{dualellipses} is satisfied.
\end{comment}

Our canonical quasiextremal pairs will be all ordered pairs $(E,E^\star)= 
(\pi\scriptb,\pi^{\star}\scriptb)$, where 
$\scriptb= \scriptb(\bar z,\earrow,r,r^{\star})$ 
is any of the balls defined above.
\begin{proposition}
\label{prop:trulyQE}
There exists $c_0>0$ such that uniformly
for all sets $\scriptb$
described in Definition~\ref{balldefinition},
the pair of sets $(E,E^\star)=(\pi(\scriptb),\pi^\star(\scriptb))$ 
is $c_0$-quasiextremal
for the inequality \eqref{restrictedweaktype}, that is,
$\langle T(\chi_{\pi^\star(\scriptb)}),\,\chi_{\pi(\scriptb)}\rangle 
\ge c_0
|\pi(\scriptb)|^{d/(d+1)}|\pi^\star(\scriptb)|^{d/(d+1)}$.
\end{proposition}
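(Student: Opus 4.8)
The plan is to bound each of $|\pi(\scriptb)|$ and $|\pi^\star(\scriptb)|$ from above, and $\langle T(\chi_{\pi^\star(\scriptb)}),\chi_{\pi(\scriptb)}\rangle$ from below, each in terms of $\rho$ and the product $\prod_j r_j\prod_j r^\star_j$, and then to observe that thanks to the normalization \eqref{dualellipses} all powers of $\rho$ cancel, leaving a constant depending only on $d$.

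The upper bounds are immediate. Conditions \eqref{firstfirstcoords}--\eqref{firstlastcoord} involve $x$ alone: they confine $x'$ to a box of volume $2^{d-1}\prod_j r_j$ in the coordinates dual to $\earrow$, and then, for each such $x'$, confine $x_d$ to an interval of length $2\rho$. By Fubini, $|\pi(\scriptb)|\le 2^d\rho\prod_j r_j$ and, symmetrically, $|\pi^\star(\scriptb)|\le 2^d\rho\prod_j r^\star_j$. Using \eqref{dualellipses},
\[
|\pi(\scriptb)|^{d/(d+1)}\,|\pi^\star(\scriptb)|^{d/(d+1)}
=\bigl(|\pi(\scriptb)|\,|\pi^\star(\scriptb)|\bigr)^{d/(d+1)}
\le\bigl(2^{2d}\rho^{2}\,\textstyle\prod_j r_jr^\star_j\bigr)^{d/(d+1)}
=2^{2d^2/(d+1)}\rho^{d}.
\]

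The substantive point is the lower bound $|\scriptb|\gtrsim\rho^d$, where $|\scriptb|$ is the measure of $\scriptb$ as a subset of $\scripti$. Since $\scriptb\subseteq\scripti\cap(\pi(\scriptb)\times\pi^\star(\scriptb))$, this yields $\langle T(\chi_{\pi^\star(\scriptb)}),\chi_{\pi(\scriptb)}\rangle=c\,|\scripti\cap(\pi(\scriptb)\times\pi^\star(\scriptb))|\ge c\,|\scriptb|$. Parametrize $\scripti$ by $(x,t)$ via $x_\star=(x'-t,\,x_d-|t|^2)$, the substitution used to pass between \eqref{parabolaoperator} and the incidence picture; under it the measure on $\scripti$ is a fixed multiple of the pushforward of $dx\,dt$, so $|\scriptb|$ is, up to that constant, the $dx\,dt$-measure of those $(x,t)$ for which \eqref{firstfirstcoords}--\eqref{secondlastcoord} all hold. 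Write $u=x'-\bar x'$ and $v=x_\star'-\bar x_\star'$ (expanded in the basis $\earrow$) and $\bar t=\bar x'-\bar x_\star'$; then $t=u-v+\bar t$, and the change of variables $(x',x_d,t)\mapsto(u,v,x_d)$ has Jacobian of absolute value one. The key computation is that, because $\bar z\in\scripti$ forces $(\bar x_\star)_d=\bar x_d-|\bar t|^2$, conditions \eqref{firstlastcoord} and \eqref{secondlastcoord} become respectively
\begin{gather*}
\big|\,x_d-\bar x_d-|u|^2-2\langle u,\bar t\rangle\,\big|<\rho,
\\
\big|\,x_d-\bar x_d-|u|^2-2\langle u,\bar t\rangle+2\langle u,v\rangle\,\big|<\rho;
\end{gather*}
the quadratic terms $|t|^2$ cancel, and only the bilinear form $\langle u,v\rangle$ obstructs simultaneous solvability in $x_d$. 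Now restrict to $|\langle u,e_j\rangle|<\eta r_j$ and $|\langle v,e_j\rangle|<r^\star_j$, where $\eta=\eta(d)>0$ is small enough that $\eta(d-1)<\tfrac12$. Then \eqref{firstfirstcoords} and \eqref{secondfirstcoords} hold, and, crucially by \eqref{dualellipses},
\[
|\langle u,v\rangle|\le\textstyle\sum_j\eta r_jr^\star_j=\eta(d-1)\rho<\tfrac12\rho,
\]
so the two displayed intervals overlap in an $x_d$-interval of length exceeding $\rho$. Fubini together with \eqref{dualellipses} then gives
\[
|\scriptb|\ge\rho\cdot(2\eta)^{d-1}\textstyle\prod_j r_j\cdot 2^{d-1}\prod_j r^\star_j=(4\eta)^{d-1}\rho^{d}.
\]
Dividing this lower bound by the upper bound above proves the proposition with $c_0=c_0(d)>0$.

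I expect the one genuinely delicate point to be spotting the substitution that exhibits the cancellation in the two displayed inequalities; once the conditions are rewritten in the $(u,v,x_d)$ coordinates, the reciprocal relation $r_jr^\star_j=\rho$ of \eqref{dualellipses} does exactly the work needed to dominate $\langle u,v\rangle$, and the remainder is Fubini's theorem and exponent arithmetic. A minor secondary point is to confirm that the Lebesgue measure on $\scripti$ implicit in $\langle T(\chi_{E^\star}),\chi_E\rangle=c|\scripti\cap(E\times E^\star)|$ is indeed a constant multiple of the pushforward of $dx\,dt$ used above, so that the value of $|\scriptb|$ computed in the $(u,v,x_d)$ coordinates is the relevant one.
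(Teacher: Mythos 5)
Your proposal is correct and follows essentially the same route as the paper: trivial upper bounds on $|\pi(\scriptb)|$ and $|\pi^\star(\scriptb)|$, plus the lower bound $|\scriptb|\gtrsim\rho^d$, obtained by exhibiting the cancellation of the quadratic terms so that only the bilinear term $2\langle u,v\rangle$ remains, which the duality relation $r_jr_j^\star=\rho$ controls by a small multiple of $\rho$. The only (immaterial) difference is bookkeeping: the paper parametrizes $\scriptb$ by $(x,y')$ and shrinks the $x$-box by a factor $\eps$ while keeping the full $y'$-box, whereas you parametrize by $(u,v,x_d)$, shrink the $u$-box, and use the overlap of two $x_d$-intervals of length $2\rho$.
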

% \noindent We emphasize that $c_0$ is independent of
% $z,\rho,r,r^\star,\earrow$  provided that these parameters satisfy the
% conditions specified in Definition~\ref{balldefinition}.
\noindent  The straightforward verification of this
claim is postponed to \S\ref{section:trulyQE}.

These sets are numerous; $\scriptb$ depends on $2d-1 + \tfrac12 d(d+1)$ free parameters.
All of them are derived from a single example via the application of
geometric symmetries, discussed below.

We will call these sets 
``balls'' in recognition of the partial analogy with
balls introduced in connection
with various problems in harmonic analysis, partial differential
equations, and complex analysis in several variables; see for instance 
\cite{nsw},\cite{brunanagelwainger},\cite{feffermanphong},\cite{parme},\cite{christspeculation},\cite{mcneal},\cite{mcnealstein}.
However, whereas those other types of balls are associated 
to certain metrics in the sense of point-set topology,
the sets $\scriptb$
do {\em not} seem to be naturally associated with metrics. 
It seems to be an interesting question what the analogous geometric structures are,
if any, for other Radon-like transforms defined by integration over submanifolds
of dimension strictly greater than one. 
We maintain that the sets defined by
Definition~\ref{balldefinition}
are natural analogues, for our particular operator $T$, of the balls associated
to Radon-like transforms defined by integration over one-dimensional manifolds
\cite{taowright}.
The family of sets $\scriptb$ is studied in more detail in \cite{christextremal}.

\subsection{Main result}

If there is some $\scriptb$ such that
$E$ is the union of $\pi(\scriptb)$ with an arbitrary set having
measure $\eps^{-1}|\pi(\scriptb)|$, and likewise $E^\star$ is the union
of $\pi^\star(\scriptb)$ with an arbitrary set of measure
$\eps^{-1}|\pi^\star(\scriptb)|$,
then $(E,E^\star)$ is $c\eps$-quasiextremal.  Thus 
$\eps$-quasiextremality for small $\eps$ 
cannot impose structure on more than
small portions of $E,E^\star$.

Write $\scriptt(E,E^{\star}) = \langle T(\chi_{E^{\star}}),\,\chi_E\rangle$.
\begin{theorem} \label{maintheorem}
Let $d\ge 2$.
There exist $C,A<\infty$ with the following property.
For any $\eps>0$ and
any measurable sets
$E,E^{\star}\subset\reals^d$ of positive Lebesgue measures satisfying
$\scriptt(E,E^{\star})
\ge \eps|E|^{d/(d+1)} |E^{\star}|^{d/(d+1)}$,
there exists a set $\scriptb\subset \scripti$, 
of the type described in Definition~\ref{balldefinition},
such that the associated pair
$(B,B^{\star}) = (\pi(\scriptb),\pi^{\star}(\scriptb))$ satisfies
\begin{gather}
\scriptt(E\cap B,E^{\star}\cap B^{\star}) \ge C^{-1}\eps^A \scriptt(E,E^{\star})
\label{Abound}
\\
\intertext{and}
|B|\le |E| \text{ and } |B^{\star}|\le |E^{\star}|.
\label{Bbound}
\end{gather}
\end{theorem}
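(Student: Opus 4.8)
The plan is to extract the ball $\scriptb$ by a stopping-time / refinement argument applied to the incidence set $\scripti\cap(E\times E^\star)$, exploiting the symmetry group of $T$ to normalize at each stage. First I would dispose of the trivial ranges: if $\eps$ is larger than the absolute constant governing the restricted weak type bound \eqref{restrictedweaktype} there is nothing to prove (no such pair exists for $\eps$ beyond that constant, up to adjusting $C$), and if $\eps$ is bounded below by a fixed small constant we may take $A=0$ and argue by a compactness-free but purely combinatorial pigeonholing; the content is the regime $\eps\to 0$, where the exponent $A$ must be produced. The strategy throughout is that each elementary combinatorial step — pigeonholing a dyadic value of a density, localizing a fiber, passing to a subset of comparable measure — costs only a fixed power of $\eps$, and only a bounded number of such steps are needed, so the total loss is $C^{-1}\eps^A$.

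The core of the argument runs as follows. Using the symmetries of $T$ (translations in $\reals^d$, the $(d-1)$-parameter parabolic shears $x\mapsto(x'+v,\,x_d+2\langle x',v\rangle+|v|^2)$, the anisotropic dilations, and rotations in the $x'$ variable), I would first reduce to the case where the "typical" incidence configuration is already normalized: there is a point $\bar z\in\scripti$ through which a large (in the relevant weighted sense) portion of $\scripti\cap(E\times E^\star)$ passes, and after applying a symmetry we may assume $\bar z$ is the origin. One then runs the combinatorial proof of \eqref{restrictedweaktype} from \cite{schlag},\cite{edinburgh} — which estimates $\scriptt(E,E^\star)$ by iterating the basic transversality/Fubini inequality along the $(d-1)$-dimensional fibers $\scriptm_x$ — but now in \emph{reverse}: the hypothesis $\scriptt(E,E^\star)\ge\eps|E|^{d/(d+1)}|E^\star|^{d/(d+1)}$ forces, by pigeonholing at each application of the inequality, the existence of nested sub-populations of incidences on which all the intermediate densities (the number of $t$'s with $(x'-t,x_d-|t|^2)\in E^\star$, and the dual quantity) are pinned to dyadic values $\sim\lambda,\lambda^\star$ with $\lambda\lambda^\star\sim\rho$ for a single scale $\rho$. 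Those pinned densities, transported back through the symmetry normalization, are exactly the defining inequalities \eqref{firstfirstcoords}--\eqref{secondlastcoord} of a set $\scriptb(\bar z,\earrow,r,r^\star)$ with $r_jr_j^\star=\rho$: the $r_j$ record the extent of the $E$-fiber in the $j$-th normalized direction and $r_j^\star$ the extent of the dual $E^\star$-fiber, and \eqref{dualellipses} is precisely the pinning relation $\lambda\lambda^\star\sim\rho$. Setting $(B,B^\star)=(\pi\scriptb,\pi^\star\scriptb)$, the surviving incidences all lie in $(E\cap B)\times(E^\star\cap B^\star)$, which gives \eqref{Abound} with $A$ equal to the (bounded) number of pigeonholing steps times the loss per step. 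Finally \eqref{Bbound}: since $B\subset$ a set whose measure is controlled by the pinned $E$-fiber density times the number of occupied fibers, and that product is at most a constant times $|E|$ (the fibers being essentially disjoint in $E$ by construction), one gets $|B|\le|E|$ after absorbing the constant into the choice of $\rho$ — or, if necessary, by shrinking $\scriptb$ (decreasing one $r_j$ and correspondingly increasing $r_j^\star$ to preserve \eqref{dualellipses}), which only helps \eqref{Bbound} and costs a further fixed power of $\eps$ in \eqref{Abound}; similarly for $B^\star$.

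The main obstacle I anticipate is the interface between the \emph{continuum} incidence geometry and the \emph{discrete} pigeonholing: the quantities being pinned — fiber measures of $E$ and $E^\star$ along the $(d-1)$-dimensional sheets $\scriptm_x$ — are only meaningful up to sets of measure zero, so one must set up a robust notion of "refinement of the incidence manifold" (a subset $\Omega\subset\scripti\cap(E\times E^\star)$ of comparable measure on which all the projections and conditional measures behave uniformly) and track that a bounded number of refinements suffices. This is where \cite{taowright}-style bookkeeping is needed, and where the bounded-loss bound on $A$ is actually verified. A secondary but real difficulty is confirming that the intermediate densities produced by the reversed inequality assemble into the \emph{same} scale $\rho$ across all $d-1$ coordinate directions and across both $E$ and $E^\star$ — this is exactly where the high-dimensional symmetry group of $T$ is essential, since it is the parabolic shears that align the dual fiber structure of $E^\star$ with that of $E$, and without them (as Stovall's sphere example \cite{betsyspheres} shows) the single-ball conclusion fails. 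Once those two points are handled, \eqref{Abound} and \eqref{Bbound} follow by assembling the normalized data and reading off the ball, with all constants depending only on $d$.
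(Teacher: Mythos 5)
There is a genuine gap at the heart of your argument: the ``reverse pigeonholing'' step pins only \emph{scalar} quantities (dyadic values of fiber measures), and for $d>2$ this cannot produce the defining data of a ball in Definition~\ref{balldefinition}. The fibers here are $(d-1)$-dimensional, so knowing that the $E^\star$-fiber through a point has measure $\sim\lambda^\star$ says nothing about its shape or orientation; the orthonormal basis $\earrow$ and the radii $r_j,r_j^\star$ with the duality relation \eqref{dualellipses} are precisely the extra degrees of freedom that must be \emph{extracted} from quasiextremality, and pinned densities are not ``exactly the defining inequalities \eqref{firstfirstcoords}--\eqref{secondlastcoord}.'' You flag this as a ``secondary difficulty'' to be resolved by the parabolic shears, but the symmetry group only normalizes a configuration once it is found; it does not force the a priori unrelated fibers over different base points to be comparable to a \emph{common} ellipsoid, nor that ellipsoid to be dual (in the sense of \eqref{dualellipses}) to one containing the base set. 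The paper's proof is organized around exactly this point: each fiber is approximated by a balanced convex set (Lemma~\ref{lemma:convexify}), the inflation/determinant bound (Lemma~\ref{lemma:detbound}) penalizes fibers not concentrated in convex sets of measure $\lesssim\eps^{-C}\alpha_\star$, the base set $\Omega_1$ is put inside a convex set of measure $\lesssim\eps^{-C}\alpha$ via a three-step tower (Lemma~\ref{lemma:merging}), and then the slicing bound (Lemma~\ref{lemma:slice}) together with a dichotomy in $\rho$ forces most of each fiber into the ellipsoid dual to the one containing $\Omega_1$ --- this is where $r_jr_j^\star=\rho$ comes from. A final algebraic compatibility argument, resting on the identity $|r|^2-|s|^2+|t|^2-|r-s+t|^2=2(t-s)\cdot(s-r)$, shows a single $\scriptb$ serves both $E$ and $E^\star$. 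None of this machinery, or a substitute for it, appears in your outline, and ``bounded number of pigeonholing steps, each costing $\eps^{O(1)}$'' does not supply it.

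A smaller but real error concerns \eqref{Bbound}: the natural output of the argument is $|B|\le C\eps^{-A}|E|$, $|B^\star|\le C\eps^{-A}|E^\star|$, and your proposed fix --- decreasing one $r_j$ while increasing $r_j^\star$ to preserve \eqref{dualellipses} --- shrinks $|B|$ only at the cost of enlarging $|B^\star|$, so it cannot repair both inequalities simultaneously. One must instead shrink the ball in all parameters (in particular $\rho$) and show that some smaller ball still captures a fixed $\eps$-power of the incidences; the paper does this with a covering lemma (Lemma~\ref{lemma:covering}) producing $C\delta^{-A}$ sub-balls of prescribed projection measures, followed by pigeonholing.
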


The proof of Theorem~\ref{maintheorem}
yields a slightly stronger conclusion:
there exists a pair $(B,B^\star)$ satisfying
\begin{gather}
\scriptt(E\cap B,E^{\star}\cap B^{\star}) \ge C^{-1}\scriptt(E,E^{\star})
\label{eq:strongerconclusionA}
\\
\intertext{and}
|B|\le C\eps^{-A}|E| \text{ and } |B^{\star}|\le C\eps^{-A}|E^{\star}|.
\label{eq:strongerconclusionB}
\end{gather}
This implies \eqref{Abound},\eqref{Bbound} by a simple covering argument,
Lemma~\ref{lemma:covering}.

A result related in spirit, though for a different operator, is an inequality of
Moyua, Vargas, and Vega \cite{MVV}. A fundamental difference here is the more complicated
geometry; the set of all sets $B$ of the type described above having a specified measure,
does not enjoy simple covering properties like those of the set of all Euclidean balls
of specified measure.

Theorem~\ref{maintheorem} does not {\em characterize} quasiextremal {\em pairs},
even disregarding the ambiguity inherent in the exponent $A$.
There exists a constant $\delta>0$ such that
for any $\scriptb$, there exist sets $E\subset B=\pi(\scriptb)$,
$E^\star\subset B^\star=\pi^\star(\scriptb)$
satisfying $|E|\ge\delta |B|$ and $|E^\star|\ge\delta |B^\star|$,
yet $\langle T(\chi_{E^\star}),\,\chi_E\rangle=0$.
The analysis does give some further information about quasiextremal pairs, but
we do not know how to formulate it in a useful way.
However, more can be said about single quasiextremal sets and functions.
See Theorem~\ref{thm:single} below.

A further refinement would be to obtain the optimal value for the exponents
$A$ in \eqref{Abound} or
\eqref{eq:strongerconclusionB},
or $\delta$ in \eqref{reformulatedbound}, below.
Some concrete numbers could be extracted from the proof, but we have investigated 
neither their value nor their optimality.

Theorem~\ref{maintheorem} can be equivalently reformulated 
as a refinement of the restricted weak type bound.
\begin{theorem} \label{thm:reformulation}
There exist $C<\infty$ and $\delta>0$ such that for any measurable sets
$E,E^{\star}\subset\reals^d$ of positive Lebesgue measures, 
\begin{equation} \label{reformulatedbound}
\langle T(\chi_{E^{\star}}),\,\chi_E\rangle
\le C|E|^{d/(d+1)}
|E^\star|^{d/(d+1)}
\cdot
\sup_{\scriptb}
\Big(
\frac{|E\cap \pi(\scriptb)|}{|E|}
\cdot
\frac{|E^\star\cap \pi^\star(\scriptb)|}{|E^\star|}
\Big)^\delta
\end{equation}
where the supremum is taken over all $\scriptb$
described in Definition~\ref{balldefinition}
satisfying $|\pi(\scriptb)|\le |E|$ 
and $|\pi^\star(\scriptb)|\le |E^\star|$.
\end{theorem}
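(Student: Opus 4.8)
The plan is to obtain Theorem~\ref{thm:reformulation} as an essentially immediate consequence of Theorem~\ref{maintheorem}, the only additional ingredient being the restricted weak type inequality \eqref{restrictedweaktype} itself. Fix measurable $E,E^{\star}\subset\reals^{d}$ of positive, finite Lebesgue measure; we may assume $\scriptt(E,E^{\star})>0$, since otherwise \eqref{reformulatedbound} holds trivially. Normalize by setting
\[
\eps=\frac{\scriptt(E,E^{\star})}{|E|^{d/(d+1)}\,|E^{\star}|^{d/(d+1)}},
\]
so that $\scriptt(E,E^{\star})=\eps\,|E|^{d/(d+1)}|E^{\star}|^{d/(d+1)}$ with equality; by \eqref{restrictedweaktype}, $\eps\le C_{0}$ for an absolute constant $C_{0}$. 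Applying Theorem~\ref{maintheorem} with this value of $\eps$ produces $\scriptb\subset\scripti$ of the type described in Definition~\ref{balldefinition} whose associated pair $(B,B^{\star})=(\pi(\scriptb),\pi^{\star}(\scriptb))$ satisfies \eqref{Bbound}, so that $|B|\le|E|$ and $|B^{\star}|\le|E^{\star}|$, together with \eqref{Abound}, so that $\scriptt(E\cap B,\,E^{\star}\cap B^{\star})\ge C^{-1}\eps^{A}\scriptt(E,E^{\star})$.

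The next step is to bound the left-hand side of the latter inequality from above. This should be done by applying \eqref{restrictedweaktype} to the \emph{truncated} sets $E\cap B$ and $E^{\star}\cap B^{\star}$ --- not to $B$ and $B^{\star}$ --- which gives $\scriptt(E\cap B,E^{\star}\cap B^{\star})\le C_{0}|E\cap B|^{d/(d+1)}|E^{\star}\cap B^{\star}|^{d/(d+1)}$. Chaining this with the lower bound \eqref{Abound}, substituting $\scriptt(E,E^{\star})=\eps|E|^{d/(d+1)}|E^{\star}|^{d/(d+1)}$, and dividing through by $|E|^{d/(d+1)}|E^{\star}|^{d/(d+1)}$ yields
\[
\eps^{A+1}\le CC_{0}\Big(\frac{|E\cap B|}{|E|}\cdot\frac{|E^{\star}\cap B^{\star}|}{|E^{\star}|}\Big)^{d/(d+1)}.
\]
Because $|\pi(\scriptb)|=|B|\le|E|$ and $|\pi^{\star}(\scriptb)|=|B^{\star}|\le|E^{\star}|$, this particular $\scriptb$ is admissible in the supremum appearing in \eqref{reformulatedbound}; hence that supremum is $\gtrsim\eps^{(A+1)(d+1)/d}$. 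Raising to the power $\delta:=\tfrac{d}{(A+1)(d+1)}$ then shows $\bigl(\sup_{\scriptb}(\cdots)\bigr)^{\delta}\gtrsim\eps$, i.e.\ $\eps\lesssim\bigl(\sup_{\scriptb}(\cdots)\bigr)^{\delta}$, and multiplying back through by $|E|^{d/(d+1)}|E^{\star}|^{d/(d+1)}$ produces exactly \eqref{reformulatedbound} with this $\delta$ and a suitable finite constant $C$ depending only on $d$, $A$, and the constant in \eqref{restrictedweaktype}.

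I do not expect a substantial obstacle in this direction: the entire force of the statement is borrowed from Theorem~\ref{maintheorem}, and what remains is the normalization and the bookkeeping of exponents above. The single point meriting care is the truncation noted above --- one must estimate $\scriptt(E\cap B,E^{\star}\cap B^{\star})$ using $|E\cap B|$ and $|E^{\star}\cap B^{\star}|$, not $|B|$ and $|B^{\star}|$, since it is precisely the relative densities $|E\cap B|/|E|$ and $|E^{\star}\cap B^{\star}|/|E^{\star}|$ that the right-hand side of \eqref{reformulatedbound} is meant to detect. It is worth remarking that the reverse passage --- recovering the incidence-capturing conclusion of Theorem~\ref{maintheorem} from the density information implicit in \eqref{reformulatedbound} --- is less direct, since, as noted after the statement of Theorem~\ref{maintheorem}, a ball-pair $(B,B^{\star})$ can contain fixed proportions of both $E$ and $E^{\star}$ while registering no incidences between those portions; establishing a full converse would require additional argument.
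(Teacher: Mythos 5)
Your argument is correct and is essentially identical to the paper's own proof: define $\eps$ by $\scriptt(E,E^\star)=\eps|E|^{d/(d+1)}|E^\star|^{d/(d+1)}$, apply Theorem~\ref{maintheorem}, bound $\scriptt(E\cap B,E^\star\cap B^\star)$ above via \eqref{restrictedweaktype} applied to the truncated sets, and solve for $\eps$, obtaining $\delta=d/((A+1)(d+1))$. No further comment is needed.
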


\subsection{A local analogue}
The situation for
the localized operator $T_0$ 
is more complicated to describe, though not more subtle.
Recall that $T_0$ maps
$L^p$ to $L^q$ if and only if
$(p^{-1},q^{-1})$ belongs to the convex hull
of $(0,0)$, $(1,1)$, $(0,1)$, and $(\frac{d}{d+1},\frac1{d+1})$.
These inequalities follow from the
$L^{(d+1)/d}\to L^{d+1}$ inequality via interpolation with
the trivial $L^1\mapsto L^1$ and $L^\infty\mapsto L^\infty$ bounds.

Define
\begin{equation} \label{eq:Lambdadefn}
\Lambda_0(t,t_\star) = 
\sup_{|E|=t}
\,\,
\sup_{|E^\star|=t_\star}
\,\,
\langle T_0(\chi_{E^\star}),\chi_E\rangle.
\end{equation}
$(E,E^\star)$ is said to be $\eps$-quasiextremal
with respect to the functional $\Lambda_0$
if $\langle T_0(\chi_{E^\star}),\chi_{E}\rangle 
\ge \eps\Lambda(|E|,|E^\star|)$.

Since $T_0$ preserves both $L^1$ and $L^\infty$, there is the bound
\begin{equation*}
\langle T_0(\chi_{E^\star}),\chi_{E}\rangle
\le C\min(|E|,|E^\star|,|E|^{d/(d+1)}|E^\star|^{d/(d+1)}),
\end{equation*}
so $\Lambda_0(t,t_\star)
\le C\min\big(t,t_\star,t^{d/(d+1)}t_\star^{d/(d+1)}\big)$.
Simple examples demonstrate that there are no stronger power law bounds; 
\begin{equation*}
\Lambda_0(t,t_\star)\sim \min(t,t_\star,t^{d/(d+1)}t_\star^{d/(d+1)})
\end{equation*}
uniformly for all $t,t_\star$.
Note that $\Lambda_0(t,t_\star)\sim t^{d/(d+1)}t_\star^{d/(d+1)}$
if and only if
$|t|\gtrsim |t_\star|^d$ and $|t_\star|\gtrsim |t|^d$;
otherwise the upper bound $\min(|t|,|t_\star|)$ is more restrictive.

\begin{theorem} \label{thm:Lambda}
Let $c>0$ be arbitrary.
Suppose that 
$\langle T_0(\chi_{E^\star}),\chi_E\rangle
\ge \eps\Lambda_0(|E|,|E^\star|)$,
and moreover that 
\begin{equation} \label{eq:weaklycomparable}
|E|\ge c|E^\star|^d
\text{ and }
|E^\star|\ge c|E|^d.
\end{equation}
Then there exist $\scriptb,B,B^\star$
there exists a set $\scriptb\subset \scripti$,
of the type described in Definition~\ref{balldefinition},
such that the associated pair
$(B,B^{\star}) = (\pi(\scriptb),\pi^{\star}(\scriptb))$ satisfies
\begin{equation*}
\big\langle T_0(\chi_{E^{\star}\cap B^{\star}}),
\chi_{E\cap B}\big\rangle
\ge C^{-1}\eps^A 
\big\langle T_0(\chi_{E^{\star}}),
\chi_{E}\big\rangle
\end{equation*}
with $|B|\le C|E|$  and  $|B^{\star}|\le C|E^{\star}|$.
\end{theorem}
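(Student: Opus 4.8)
The plan is to deduce Theorem~\ref{thm:Lambda} from Theorem~\ref{maintheorem} by an appropriate rescaling that absorbs the truncation $|t|\le 1$ into the dilation structure of $T$, using the hypothesis \eqref{eq:weaklycomparable} to guarantee that the ball $\scriptb$ produced by the global theorem has bounded eccentricity relative to the unit scale. First I would record the dilation and translation symmetries of $T$: for $\lambda>0$ the map $\delta_\lambda(x',x_d)=(\lambda x',\lambda^2 x_d)$ satisfies $T(f\circ\delta_\lambda)=\lambda^{-(d-1)}(Tf)\circ\delta_\lambda$, so that $\langle T(\chi_{\delta_\lambda E^\star}),\chi_{\delta_\lambda E}\rangle = \lambda^{d+1}\langle T(\chi_{E^\star}),\chi_E\rangle$ while $|\delta_\lambda E| = \lambda^{d+1}|E|$; more generally $T$ commutes with a parabolic affine group, and the class of sets $\scriptb$ in Definition~\ref{balldefinition} is preserved by all these symmetries. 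The key observation is that under \eqref{eq:weaklycomparable} we have $\Lambda_0(|E|,|E^\star|)\sim |E|^{d/(d+1)}|E^\star|^{d/(d+1)}$, so the hypothesis $\langle T_0(\chi_{E^\star}),\chi_E\rangle\ge\eps\Lambda_0$ becomes, up to a harmless constant, an $\eps$-quasiextremality hypothesis for the \emph{global} operator $T$ restricted to the given sets, since $T_0\le T$ pointwise on nonnegative functions.

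Next I would apply Theorem~\ref{maintheorem} (in its stronger form \eqref{eq:strongerconclusionA}--\eqref{eq:strongerconclusionB}) to $E,E^\star$ and the global operator $T$, obtaining $\scriptb$ and $(B,B^\star)=(\pi(\scriptb),\pi^\star(\scriptb))$ with $\scriptt(E\cap B,E^\star\cap B^\star)\ge C^{-1}\scriptt(E,E^\star)\ge C^{-1}\langle T_0(\chi_{E^\star}),\chi_E\rangle$ and $|B|\le C\eps^{-A}|E|$, $|B^\star|\le C\eps^{-A}|E^\star|$. The issue is that this uses $T$, not $T_0$: the incidences counted in $\scriptt(E\cap B,E^\star\cap B^\star)$ come from \emph{all} $t\in\reals^{d-1}$, whereas $T_0$ counts only $|t|\le 1$. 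Here the truncation hypothesis on $E^\star$ (it is a subset of a fixed bounded set, so $|E^\star|\lesssim 1$) together with \eqref{eq:weaklycomparable} forces the relevant scales to be $\lesssim 1$: the parameters $r_j,r_j^\star$ of $\scriptb$, and in particular $\rho$, must satisfy $\prod_j r_j\cdot\rho\sim|B|\lesssim\eps^{-A}|E|$ and similarly for $B^\star$, and the constraint $|E|\gtrsim|E^\star|^d$, $|E^\star|\gtrsim|E|^d$ with both $\lesssim 1$ pins the geometry to a region of diameter $O(1)$ in each fiber. I would verify that for such $\scriptb$, the displacement vector $t=y'-x'$ between incident points $x\in B$, $y\in B^\star$ satisfies $|t|\lesssim 1$, possibly after first translating and dilating so that $\scriptb$ is centered and normalized; if $|t|$ could be comparable to a large constant, one rescales by that constant, which stays bounded because of \eqref{eq:weaklycomparable}. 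Consequently $\langle T_0(\chi_{E^\star\cap B^\star}),\chi_{E\cap B}\rangle\ge c\,\scriptt(E\cap B,E^\star\cap B^\star)$, and combining the inequalities yields the theorem, with $|B|\le C\eps^{-A}|E|$ converted to $|B|\le C|E|$ by the covering lemma, Lemma~\ref{lemma:covering}, exactly as in the passage from \eqref{eq:strongerconclusionA}--\eqref{eq:strongerconclusionB} to \eqref{Abound}--\eqref{Bbound}.

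The main obstacle I anticipate is the bookkeeping in the previous paragraph: showing rigorously that the ball $\scriptb$ furnished by the global theorem, after the normalizing symmetry, has all its defining parameters $\rho$, $r_j$, $r_j^\star$ bounded above by an absolute constant (so that incidences within $B\times B^\star$ genuinely use $|t|\lesssim 1$), rather than merely having bounded \emph{product}. This is where \eqref{eq:weaklycomparable} is essential and where the argument for $T_0$ genuinely differs from the clean global statement. A secondary point requiring care is that the symmetries of $T$ used to normalize $\scriptb$ do not preserve $T_0$ — dilation changes the truncation radius — so one must track how the truncation radius transforms and confirm it remains $\gtrsim 1$ throughout; this again comes down to the two-sided comparability \eqref{eq:weaklycomparable}. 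Once these geometric facts are in place, the rest is the soft combination of Theorem~\ref{maintheorem}, the pointwise bound $T_0\le T$, and the asymptotics $\Lambda_0(t,t_\star)\sim t^{d/(d+1)}t_\star^{d/(d+1)}$ under \eqref{eq:weaklycomparable}, together with Lemma~\ref{lemma:covering}.
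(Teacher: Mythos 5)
Your proposal opens exactly as the paper does: under \eqref{eq:weaklycomparable} one has $\Lambda_0(|E|,|E^\star|)\sim|E|^{d/(d+1)}|E^\star|^{d/(d+1)}$, so the hypothesis becomes a quasiextremality hypothesis of the type addressed by Theorem~\ref{maintheorem}; the paper states that Theorem~\ref{thm:Lambda} follows directly and offers no further discussion. You are right to worry that a black-box invocation is not quite enough---Theorem~\ref{maintheorem} gives a lower bound for $\scriptt(E\cap B,E^\star\cap B^\star)=\langle T(\chi_{E^\star\cap B^\star}),\chi_{E\cap B}\rangle$ for the untruncated $T$, and $T_0\le T$ points the wrong way for the desired conclusion about $T_0$---but the bridge you propose has a gap. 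You assert that \eqref{eq:weaklycomparable}, together with $|E|,|E^\star|\lesssim 1$, forces the ball $\scriptb$ produced by Theorem~\ref{maintheorem} to have all individual parameters $\rho,r_j,r_j^\star$ bounded, so that incident pairs in $B\times B^\star$ have displacement $|t|\lesssim 1$. This does not follow: the hypotheses control $\rho$ and the products $\prod_j r_j$ and $\prod_j r_j^\star$ (via $|B|,|B^\star|$), but place no constraint whatsoever on the anisotropy---a legitimate $\scriptb$ of the required volume can have $r_1$ arbitrarily large and $r_1^\star=\rho/r_1$ correspondingly small. Nor does anything force the center $\bar z=(\bar x,\bar x_\star)$ to satisfy $|\bar x'-\bar x_\star'|\lesssim 1$. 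So the key claim that the incidences captured in $B\times B^\star$ are $T_0$-incidences is unsupported, and the attempt to make the dilation argument precise is what you flag as the "main obstacle" without resolving it.

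The way to actually close the $T$-versus-$T_0$ gap is not to control the ball furnished by the global theorem, but to observe that the \emph{proof} of Theorem~\ref{maintheorem} applies verbatim with $T_0$ in place of $T$. Lemma~\ref{lemma:tower}, run for $T_0$, automatically produces displacements $s,t$ with $|s|,|t|\le 1$; the incidences between $E\cap B$ and $E^\star\cap B^\star$ that the proof eventually exhibits come from those displacements and are therefore genuine $T_0$-incidences, so the lower bound produced is a lower bound for $\langle T_0(\chi_{E^\star\cap B^\star}),\chi_{E\cap B}\rangle$ rather than for the untruncated form. The Slicing Lemma~\ref{lemma:slice}, Lemma~\ref{lemma:convexify}, Lemma~\ref{lemma:detbound}, and the merging argument are all purely measure-theoretic and combinatorial and make no reference to the specific operator, so the rest of the argument is unchanged. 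Rerunning the proof with the truncated operator, rather than invoking the theorem as a black box and then trying to retrofit a truncation bound, is what ``direct consequence'' should be taken to mean; the extra constant $C$ in $|B|\le C|E|$ (absent in Theorem~\ref{maintheorem}) provides the small additional slack this costs.
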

\noindent
This is a direct consequence of Theorem~\ref{maintheorem},
since \eqref{eq:weaklycomparable} implies that
$\Lambda_0(|E|,|E^\star|)$ is comparable to
$|E|^{d/(d+1)}|E^\star|^{(d/(d+1)}$.

No reasonable analogue of the conclusion holds without
the supplementary hypothesis \eqref{eq:weaklycomparable}.
Perhaps the simplest example illustrating this is
where $E^\star$ is the unit ball $B(0,1)$, and $E$ is
an arbitrary subset of $B(0,\tfrac12)$ of small measure.
Then $\langle T_0(\chi_{E^\star}),\chi_E\rangle
\sim |E|\sim\Lambda_0(|E|,|E^\star|)$,
uniformly over all $E\subset B(0,\tfrac12)$.

To construct a second class of trivial examples,
consider any positive integer $N$
and any subset $\{z_j: 1\le j\le N\}$ of $\reals^d$
of cardinality $N$. 
Let $F\subset\reals^d$ be the union of the paraboloids
$P_j=\{z_j-(t,|t|2): t\in\reals^{d-1} \text{ and } |t|<1\}$.
Let $E^\star_\delta$ be the set of all points within distance
$2\delta$ of $\cup_{j=1}^N P_j$,
and let $E_\delta=\cup_{j=1}^N B(z_j,\delta)$ be the union of the $N$
$\delta$-balls centered at the points $z_j$.
If $\delta\in(0,1]$ is chosen to be sufficiently small,
depending on $\{z_j\}$, then $|E_\delta^\star|\sim N\delta$,
while $|E_\delta|\sim N\delta^d$, uniformly in $N,\delta$
provided that $\delta$ is sufficiently small.
Thus $|E_\delta|\lesssim |E^\star_\delta|^d$,
whence $\Lambda_0(|E_\delta|,|E_\delta^\star|)\sim |E_\delta|$.
Moreover
$|E_\delta|\ll |E^\star_\delta|^d$ as $N\to\infty$.
Clearly $T(\chi_{E^\star_\delta})\gtrsim 1$
at every point of $E_\delta$, uniformly in all parameters,
whence 
$\langle T_0(\chi_{E^\star}),\chi_E\rangle
\gtrsim 
\Lambda_0(|E|,|E^\star|)$.

\subsection{Three extensions}
Theorem~\ref{maintheorem} 
has an extension to general functions.
We say that a pair of functions $(f,f^\star)$ is $\eps$-quasiextremal
if both $f,f^\star$ have finite $L^{(d+1)/d}$ norms
and 
\begin{equation} \label{eq:QEforfunctions}
|\langle Tf^\star,f\rangle|\ge \eps
\|f\|_{(d+1)/d} \|f^\star\|_{(d+1)/d}.
\end{equation}

\begin{comment}
Given $f,f^\star$, define sets
\begin{equation}
\begin{aligned}
&E_k=\set{x: 2^k\le|f(x)|<2^{k+1}} 
\\
&E_l^\star=\set{x_\star: 2^l\le|f^\star(x_\star)|<2^{l+1}}.
\end{aligned}
\end{equation}
Then $2^k|E_k|^{d/(d+1)}\le\|f\|_{(d+1)/d}$
and $2^l|E_l^\star|^{d/(d+1)}\le\|f^\star\|_{(d+1)/d}$.
\end{comment}

\begin{theorem} \label{thm:functions}
There exist $c,A\in\reals^+$ such that for any $\eps>0$,
for any pair of nonnegative functions $(f,f^\star)$
which is $\eps$-quasiextremal 
in the sense of inequality \eqref{eq:QEforfunctions},
there exist sets $E,E^\star$, positive scalars $t,t^\star$,
and a ball $\scriptb$ of the type described in
Definition~\ref{balldefinition},
such that
\begin{gather}
t\chi_E\le f \text{ and } 
t^\star\chi_{E^\star}\le f^\star 
\\
\langle T(t^\star\chi_{E^\star\cap B^\star}),\,t\chi_{E\cap B}\rangle
\ge c \eps^A
\langle T(f^\star),\,f\rangle
\\
|B|\le|E| \text{ and } |B^\star|\le|E^\star|,
\end{gather}
where $(B,B^\star) =(\pi(\scriptb), \pi^\star(\scriptb))$.
\end{theorem}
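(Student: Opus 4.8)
The plan is to deduce Theorem~\ref{thm:functions} from Theorem~\ref{maintheorem} by the standard pigeonhole-over-dyadic-levels argument, with the one subtlety being that the sum over level pairs must be controlled by the single-scale estimate. First I would normalize $\|f\|_{(d+1)/d}=\|f^\star\|_{(d+1)/d}=1$ and, using the elementary fact that rescaling $f$ and $f^\star$ affects both sides of \eqref{eq:QEforfunctions} in a balanced way, reduce to this case. Then decompose $f=\sum_k 2^k\chi_{E_k}$ and $f^\star=\sum_l 2^l\chi_{E_l^\star}$ into the dyadic pieces $E_k=\{2^k\le f<2^{k+1}\}$, $E_l^\star=\{2^l\le f^\star<2^{l+1}\}$ (the decomposition already flagged in the commented-out block), so that $2^k|E_k|^{d/(d+1)}\le 1$ and $2^l|E_l^\star|^{d/(d+1)}\le 1$ for every $k,l$. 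Bilinearity gives $\langle Tf^\star,f\rangle=\sum_{k,l}2^{k+l}\langle T\chi_{E_l^\star},\chi_{E_k}\rangle=\sum_{k,l}2^{k+l}\scriptt(E_k,E_l^\star)$.

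The heart of the reduction is to absorb the double sum. Because $2^{k}|E_k|^{d/(d+1)}\le 1$, only the pairs $(k,l)$ with $2^{k+l}\scriptt(E_k,E_l^\star)$ not too small can contribute: using the trivial bounds $\scriptt(E_k,E_l^\star)\le\min(|E_k|,|E_l^\star|)$ together with the restricted weak type bound \eqref{restrictedweaktype}, one shows that the tails of the sum over $k$ and over $l$ decay geometrically once $2^{k+l}\scriptt(E_k,E_l^\star)\le\eta$ for a suitable small $\eta$ comparable to a fixed power of $\eps$; a short computation (the kind of routine summation I will not grind through here) then produces a single pair $(k,l)$ with
\begin{equation*}
2^{k+l}\scriptt(E_k,E_l^\star)\gtrsim \eps^{A_1}\langle Tf^\star,f\rangle
\end{equation*}
for some absolute exponent $A_1$. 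Equivalently, writing $t=2^k$, $t^\star=2^l$, $E=E_k$, $E^\star=E_l^\star$, we have $t\chi_E\le f$, $t^\star\chi_{E^\star}\le f^\star$, and $\langle T(t^\star\chi_{E^\star}),t\chi_E\rangle\gtrsim\eps^{A_1}\langle Tf^\star,f\rangle$. Since also $t|E|^{d/(d+1)}\le\|f\|_{(d+1)/d}$ and $t^\star|E^\star|^{d/(d+1)}\le\|f^\star\|_{(d+1)/d}$, the pair $(E,E^\star)$ is $c\eps^{A_1}$-quasiextremal in the sense of Definition~\ref{defn:QEpairs} after the obvious rescaling.

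Now I would invoke Theorem~\ref{maintheorem} applied to the sets $E,E^\star$ with the quasiextremality constant $c\eps^{A_1}$: it supplies a ball $\scriptb$ of the type in Definition~\ref{balldefinition}, with $(B,B^\star)=(\pi(\scriptb),\pi^\star(\scriptb))$, satisfying $\scriptt(E\cap B,E^\star\cap B^\star)\ge C^{-1}(\eps^{A_1})^A\,\scriptt(E,E^\star)$ together with $|B|\le|E|$ and $|B^\star|\le|E^\star|$. Multiplying through by $t t^\star$ and chaining the two inequalities gives
\begin{equation*}
\langle T(t^\star\chi_{E^\star\cap B^\star}),\,t\chi_{E\cap B}\rangle
= t t^\star\,\scriptt(E\cap B,E^\star\cap B^\star)
\gtrsim \eps^{A_1 A}\,t t^\star\,\scriptt(E,E^\star)
\gtrsim \eps^{A_1 A + A_1}\,\langle Tf^\star,f\rangle,
\end{equation*}
which is the desired conclusion with the new exponent $A'=A_1A+A_1$ and a new constant $c'$. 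The containments $t\chi_E\le f$, $t^\star\chi_{E^\star}\le f^\star$ and $|B|\le|E|$, $|B^\star|\le|E^\star|$ are exactly the remaining assertions. The main obstacle — really the only place requiring care — is the pigeonholing step: one must rule out the scenario in which mass is spread across infinitely many, or a number growing with $\eps^{-1}$, of dyadic pairs with no single dominant term. This is handled precisely by the two a priori bounds on $\scriptt(E_k,E_l^\star)$ (the $L^1$–$L^\infty$ bound and \eqref{restrictedweaktype}), which force geometric decay away from the ``diagonal'' band $2^{k+l}\sim \eps^{-O(1)}$ and thereby cap the effective number of contributing pairs by $O(\log(1/\eps))^2$ or better, whose reciprocal is absorbed into the power $\eps^{A'}$.
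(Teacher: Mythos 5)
Your reduction in the second half — once a single pair $(E,E^\star)=(E_k,E^\star_l)$ is found with $2^{k+l}\scriptt(E_k,E^\star_l)\gtrsim\eps^{A_1}\langle Tf^\star,f\rangle$, feed it to Theorem~\ref{maintheorem} and chain the inequalities — is correct and matches what the paper does. The gap is in the pigeonholing step, and it is not a technicality.

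First, the ``trivial'' bound $\scriptt(E_k,E^\star_l)\le\min(|E_k|,|E^\star_l|)$ that you invoke is \emph{false} for the global operator $T$: the paper states explicitly that $T$ maps $L^{(d+1)/d}\to L^{d+1}$ and does not map $L^p\to L^q$ for any other pair, so in particular there is no $L^1\to L^1$ or $L^\infty\to L^\infty$ control (those hold only for the localized $T_0$). Without that bound there is no ``diagonal band $2^{k+l}\sim\eps^{-O(1)}$'' to speak of. Second, and more fundamentally, the restricted weak type bound \eqref{restrictedweaktype} by itself does \emph{not} force geometric decay in the double sum: it only gives
\begin{equation*}
2^{k+l}\scriptt(E_k,E^\star_l)\lesssim\big(2^k|E_k|^{d/(d+1)}\big)\big(2^l|E^\star_l|^{d/(d+1)}\big),
\end{equation*}
and the factors on the right are merely $\le 1$ and $\ell^{(d+1)/d}$-summable to $1$, which does not preclude $\sum_k 2^k|E_k|^{d/(d+1)}=\infty$. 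Concretely, a single dyadic slice $E^\star_l$ may interact near-extremally (in the sense $\scriptt(E_k,E^\star_l)\sim|E_k|^{d/(d+1)}|E^\star_l|^{d/(d+1)}$) with arbitrarily many slices $E_k$, so ``$O(\log(1/\eps))^2$ contributing pairs'' is not available from \eqref{restrictedweaktype}. This is exactly the obstruction flagged at the start of the paper's strong-type argument.

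The paper closes this gap with Lemma~\ref{lemma:strongtypewithgain}: if every dyadic piece of $f^\star$ satisfies $2^l|F_l|^{d/(d+1)}\le\eta\|f^\star\|_{(d+1)/d}$, then $\langle Tf,f^\star\rangle\le C\eta^\gamma\|f\|_{(d+1)/d}\|f^\star\|_{(d+1)/d}$. That lemma is not a consequence of \eqref{restrictedweaktype} alone; its proof hinges on the trilinear inequality (Lemma~\ref{lemma:trulymultilinear}), which shows that the many $E_k$'s interacting strongly with a fixed $F_l$ must do so on essentially disjoint pieces of $F_l$ (the sets $G_k$ in the paper, with $\sum_k|G_k|\lesssim|F_l|$). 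The paper's proof of Theorem~\ref{thm:functions} applies Lemma~\ref{lemma:strongtypewithgain} twice (once to isolate a dominant scale for $f^\star$, once for $f$) and then a third time to force some surviving pair $(E_k,F_l)$ to be $c\eps^C$-quasiextremal. Your proposal should be repaired by replacing the pigeonholing step with an appeal to Lemma~\ref{lemma:strongtypewithgain}; as written, the needed decay estimate is asserted but does not follow from the cited tools.
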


\begin{comment}
This implies an extension of Theorem~\ref{thm:reformulation}.
\begin{corollary}
There exist constants $\delta,C\in\reals^+$
such that for all nonnegative measurable functions
$f,f^\star$ with finite $L^{(d+1)/d}$ norms
satisfying $\langle T(f^\star),f\rangle \ne 0$,
\begin{equation}
\langle T(f^\star),f\rangle
\le
C\norm{f}_{(d+1)/d}
\norm{f^\star}_{(d+1)/d}
\sup_{\scriptb} \Big(
\frac{ \norm{f\chi_B}_{(d+1)/d} } {|B|^{d/(d+1)}}
\frac{ \norm{f^\star\chi_{B^\star}}_{(d+1)/d} } {|B^\star|^{(d/(d+1)} }
\Big)^\delta
\end{equation}
where the supremum is taken over all $\scriptb$ described in
Definition~\ref{balldefinition}, and $(B,B^\star) = (\pi(\scriptb),\pi^\star(\scriptb))$. 
\end{corollary}
\end{comment}

\begin{comment}
there exist $k,l$ such that
\begin{gather}
2^k|E_k|^{d/(d+1)} \ge c\eps^C\|f\|_{(d+1)/d}
\\
2^{l}|E^\star_{l}|^{d/(d+1)} \ge c\eps^C\|f^\star\|_{(d+1)/d}
\\
\intertext{and}
(E_k,E^\star_{l})
\text{ is a $c\eps^C$-quasiextremal pair of sets.}
\end{gather}
\end{comment}

The proof leads naturally to Lorentz space inequalities.
Denote by $L^{p,r}$ the usual Lorentz spaces \cite{steinweiss}. 
Any measurable function
function $f$ is expressed uniquely, modulo null sets, 
as $f(x) = \sum_{k\in\integers} 2^k f_k(x)$
where $\chi_{E_k}(x)\le |f_k(x)|< 2\chi_{E_k}(x)$ and
the sets $E_k$ are pairwise disjoint.
Then the $L^{p,r}$ norm of $f$ is comparable to
$(\sum_{k\in\integers} (2^k|E_k|^{1/p})^r)^{1/r}$;
$L^{p,r}$ is the set of all functions having finite norms.
$L^{p,r}$ embeds properly in $L^p$ whenever $r<p$.
\begin{theorem} \label{thm:Lorentz}
$T$ maps $L^{(d+1)/d}$ boundedly to the Lorentz space
$L^{d+1,r}$ for all $r>(d+1)/d$.
\end{theorem}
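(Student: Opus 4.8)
The plan is to derive the Lorentz estimate from the refined restricted weak type inequality of Theorem~\ref{thm:reformulation} (equivalently Theorem~\ref{maintheorem}), together with one further analytic input, the genuinely multilinear inequality of Lemma~\ref{lemma:trulymultilinear}, organized by the abstract extrapolation framework developed for this purpose. Since $T$ has a nonnegative kernel we may assume $f\ge 0$, and by homogeneity we normalize $\norm{f}_{(d+1)/d}=1$. Decompose $f=\sum_{j\in\integers}2^j\chi_{E_j}$ with the $E_j$ pairwise disjoint, so that $\sum_j\big(2^j|E_j|^{d/(d+1)}\big)^{(d+1)/d}\sim 1$, and put $F_k=\{x:2^k\le Tf(x)<2^{k+1}\}$. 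Since $\norm{Tf}_{d+1,r}^r\sim\sum_k\big(2^k|F_k|^{1/(d+1)}\big)^r$, it will suffice to show that for every $\theta>(d+1)/d$ there is $C_\theta<\infty$ with
\begin{equation*}
\#\{k:2^k|F_k|^{1/(d+1)}\ge\lambda\}\le C_\theta\,\lambda^{-\theta}\qquad\text{for all }\lambda>0;
\end{equation*}
summing a geometric series in $\lambda$ against any $\theta$ strictly between $(d+1)/d$ and $r$ then yields $Tf\in L^{d+1,r}$ with the required norm control, for every $r>(d+1)/d$.

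To count the indices $k$ appearing above, fix $\lambda$ and such a $k$. From $2^k|F_k|\le\langle Tf,\chi_{F_k}\rangle=\sum_j2^j\langle T\chi_{E_j},\chi_{F_k}\rangle$, discarding a tail that by the a priori $L^{(d+1)/d}\to L^{d+1}$ bound costs only logarithmically many scales of $f$, one pigeonholes an index $j=j(k)$ for which $(F_k,E_{j(k)})$ is $\eps$-quasiextremal with $\eps\gtrsim\lambda^{O(1)}$ and simultaneously $2^{j(k)}|E_{j(k)}|^{d/(d+1)}\gtrsim\lambda^{O(1)}$. The normalization $\sum_j\big(2^j|E_j|^{d/(d+1)}\big)^{(d+1)/d}\sim1$ forces the number of admissible values of $j(k)$ to be $O(\lambda^{-O(1)})$. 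So it remains to bound, for a fixed source layer $E=E_j$ of prescribed measure, the number $n$ of pairwise disjoint sets $F_1,\dots,F_n$ — which, after one more pigeonhole, may be taken to have comparable measures — each forming an $\eps$-quasiextremal pair $(F_i,E)$.

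The crude estimate $n\lesssim\eps^{-(d+1)}$ follows immediately from the restricted weak type inequality alone, by expanding $\langle T\chi_E,\chi_{\cup_iF_i}\rangle$, applying \eqref{restrictedweaktype}, and using disjointness; this already proves $T:L^{(d+1)/d}\to L^{d+1,r}$ for some large finite $r$, but not the sharp range. To reach every $r>(d+1)/d$ one uses the refinement: Theorem~\ref{thm:reformulation} attaches to each pair $(F_i,E)$ a ball $\scriptb_i$ of the type in Definition~\ref{balldefinition} capturing an $\eps^{O(1)}$-fraction of both $E$ and $F_i$, with $|\pi(\scriptb_i)|\le|F_i|$ and $|\pi^\star(\scriptb_i)|\le|E|$. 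Feeding the $\scriptb_i$ into the multilinear inequality of Lemma~\ref{lemma:trulymultilinear} — which controls the joint interaction of many such configurations far more efficiently than iterating the bilinear bound — upgrades the count to $n\lesssim_\theta\eps^{-\theta'}$ with $\theta'$ as close to $(d+1)/d$ as desired; combined with the previous paragraph and an optimization over the auxiliary exponents, this gives the displayed counting bound for every $\theta>(d+1)/d$.

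The main obstacle is exactly the multilinear inequality and the closely related failure of the family $\{\scriptb\}$ to have good covering properties: unlike Euclidean balls of a given measure, these sets neither tile nor efficiently cover $\reals^d$, so one cannot simply count overlaps among the $\scriptb_i$. The required gain must instead be extracted directly from the geometry of the incidence manifold $\scripti$ and its symmetry group, by estimating the interaction of balls occurring at genuinely different scales and orientations, and the bookkeeping of exponents must be sharp enough that $\theta'$ can be pushed all the way down to $(d+1)/d$ — the value at which this method, and presumably the theorem in the endpoint form $r=(d+1)/d$, ceases to apply.
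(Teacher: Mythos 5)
Your argument has a genuine gap at its pivotal step: the claimed upgrade of the counting bound is not merely unproven, it is false as stated. You reduce the theorem to the assertion that, for a fixed layer $E$ of prescribed measure, the number $n$ of pairwise disjoint sets $F_1,\dots,F_n$ of comparable measures, each forming an $\eps$-quasiextremal pair with $E$, satisfies $n\lesssim_\theta\eps^{-\theta'}$ with $\theta'$ arbitrarily close to $(d+1)/d$. Take $E$ to be (essentially) the unit ball; then $T\chi_E\gtrsim 1$ on a set $G$ with $|G|\sim 1$, and partitioning $G$ into $n=c\,\eps^{-(d+1)}$ disjoint pieces of equal measure $\sim\eps^{d+1}$ makes every pair $(F_i,E)$ $\eps$-quasiextremal. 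So the trivial count $\eps^{-(d+1)}$ obtained from \eqref{restrictedweaktype} is attained, and no exponent below $d+1$ is available for this abstract counting problem. Moreover the proposed mechanism for the improvement --- feeding the balls produced by Theorem~\ref{thm:reformulation} into Lemma~\ref{lemma:trulymultilinear} --- is not an argument and points the lemma in the wrong direction: Lemma~\ref{lemma:trulymultilinear} concerns \emph{two source sets} $E,E'$ and one set $G$ on which the potential of $E'$ is pointwise large, i.e.\ many layers of $f$ against a single level set of $Tf$, which is the opposite configuration from your one-source/many-targets count. The paper's extrapolation also deliberately makes no use of the quasiextremal characterization (Theorem~\ref{maintheorem} or \ref{thm:reformulation}); it is advertised as being independent of it.

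What the paper actually does is: decompose $f=\sum_k 2^k\chi_{E_k}$, $g=\sum_j 2^j\chi_{F_j}$, and, for a single $F$, show that layers $E_k$ of widely separated measures which each interact $\eps$-strongly with $F$ must do so on essentially pairwise disjoint subsets $G_k\subset F$: if two sets $G_k\cap G_l$ overlapped substantially, Lemma~\ref{lemma:trulymultilinear} (in the form of Lemma~\ref{lemma:comparablemeasures}) would force $|E_k|$ and $|E_l|$ to be comparable up to $\eps^{-C}$, contradicting the dyadic separation of the indices. The conclusion $\sum_k|G_k|\lesssim|F|$ is then combined with H\"older's inequality in $k$, and the sharp Lorentz range enters exactly at the duality step \eqref{lorentzimprovementkey}: one needs $q'>r'$, i.e.\ $r>(d+1)/d$, for the factor $\eta^\gamma$ to survive when the $\ell^{q'}$ norm of the coefficients $2^k|E_k|^{d/(d+1)}$ is replaced by an $\ell^{r'}$ norm. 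In other words, the gain that yields all $r>(d+1)/d$ comes from disjointness of the strong-interaction sets inside $F$ plus H\"older, not from a sharpened count of quasiextremal partners of a fixed layer --- which, as the example above shows, genuinely has size $\eps^{-(d+1)}$ and can only reproduce a crude large-$r$ bound. If you wish to salvage a counting formulation you would have to use that your $F_k$ are level sets at distinct heights with $2^k|F_k|^{1/(d+1)}$ confined to a window (within a fixed measure class the count is then trivially logarithmic), and the real work --- summing over measure classes --- still requires the disjointness mechanism; as written, that step is missing and the stated substitute is false.
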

This statement is nearly optimal; no such bound can hold for $r<(d+1)/d$.
However, our method leaves open the endpoint $r=(d+1)/d$.
The proof of Theorem~\ref{thm:Lorentz} introduces general ideas
which should be useful in other problems.
A novel feature of the argument 
is its reliance on a trilinear variant of the analysis. 

%As has kindly been pointed out to us by A.~Seeger, this theorem should not be considered to be genuinely new. 

In the case $d=2$, Lorentz space bounds,
including the endpoint $r=(d+1)/d$ not reached by our method,
are established in greater generality in \cite{bakoberlinseeger}.
It has been noted \cite{christradonextremals}
that certain changes of variables intertwine $L^{p,r}\to L^{d+1}$ inequalities for $T$  
with corresponding inequalities for the Radon transform. It has been shown \cite{christkplane}
that the Radon transform maps $L^{(d+1)/d,d+1}$ boundedly to $L^{d+1}$.
Reversing the changes of variables establishes the same mapping property for $T$. 

\begin{comment}
It seems likely that such bounds, including
the endpoint, can be proved in all dimensions
by combining an argument of Oberlin
\cite{oberlinmulti} with the multilinear interpolation argument
in \cite{christrestriction}, although this author has not verified the details. 
(This form of multilinearity is unrelated to the trilinear nature of our
proof of Theorem~\ref{thm:Lorentz}.)
\end{comment}

However, the reasoning in \cite{christkplane} relies on 
the exponent $d+1$ being an integer, a fact which plays no role in the present paper.
Indeed, Stovall \cite{betsy} has combined the method introduced 
here in the proof of Theorem~\ref{thm:Lorentz}
with an extension of the analysis in \cite{ccc}
to establish strong type endpoint bounds for the
Radon-like transforms defined by convolution with
smooth measures on the curves $(t,t^2,t^3,\cdots t^d)$
in $\reals^d$, for which restricted weak type bounds
were established in \cite{ccc}. In that situation, the corresponding
exponents are not integers, so the multilinear approach does not seem to be applicable.

These results lead directly to information about
individual sets or functions --- as opposed to pairs of
sets or functions --- which are quasiextremal in the natural sense. Here
are some of the possible formulations.
In the following theorem, $\scriptb$ always denotes a set
of the type introduced in Definition~\ref{balldefinition}.

\begin{theorem} \label{thm:single}
(i)
If $E$ is a measurable set such that 
$\norm{T(\chi_E)}_{L^{d+1,\infty}} \ge\eps|E|^{d/(d+1)}$
then
there exists $\scriptb$ 
which satisfies
$|\pi^\star(\scriptb)\cap E|\ge c\eps^C|E|$.
Conversely, for any set $\scriptb$ described in
Definition~\ref{balldefinition}, for any set $E\subset\pi^\star(\scriptb)$,
$\norm{T(\chi_E)}_{L^{d+1,\infty}}\ge c(|E|/|\pi^\star(\scriptb)|)^C |E|^{d/(d+1)}$. 

(ii) If $f$ is a nonnegative measurable function satisfying
$\norm{T(f)}_{L^{d+1}} \ge\eps \norm{f}_{L^{(d+1)/d}}$
then there exist a scalar $r\in\reals^+$, a measurable
set $E$, and a set $\scriptb$ of the type introduced in \eqref{balldefinition}
such that 
\begin{align*}
&r\chi_E\le f,
\\
&\norm{r\chi_E}_{L^{(d+1)/d}}\ge c\eps^C\norm{f}_{L^{d+1)/d}},
\\
&|\pi^\star(\scriptb)|\le |E|,
\\
&|\pi^\star(\scriptb)\cap E|\ge c\eps^C|E|.
\end{align*}

(iii) There exist $c,C \in\reals^+$
such that for any $\eps>0$,
if $f\in L^{(d+1)/d}$ is any complex-valued function satisfying
$\norm{T(f)}_{L^{d+1}} \ge\eps \norm{f}_{L^{(d+1)/d}}$
then 
there exist $r\in\reals^+$ and an $C\eps^{-C}$--bump function $\varphi$
such that 
\begin{equation} \label{complexvaluedconverse}
\norm{f-r\varphi}_{L^{(d+1)/d}}\le (1-c\eps^C)\norm{f}_{(d+1)/d}.
\end{equation}
\end{theorem}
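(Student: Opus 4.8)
The plan is to derive all three statements from the pair results already in hand --- Theorems~\ref{maintheorem} and \ref{thm:functions} and the basic $L^{(d+1)/d}\to L^{d+1}$ bound --- together with routine real-variable bookkeeping; only the converse half of (i) will use the internal structure of the sets $\scriptb$. For (i), forward direction, observe first that $\norm{T\chi_E}_{L^{d+1,\infty}}\le C|E|^{d/(d+1)}<\infty$, so there is $\lambda>0$ with $\lambda|F|^{1/(d+1)}\ge\tfrac12\eps|E|^{d/(d+1)}$ for $F=\{T\chi_E>\lambda\}$, a set of finite measure; then $\langle T\chi_E,\chi_F\rangle\ge\lambda|F|\ge\tfrac12\eps|E|^{d/(d+1)}|F|^{d/(d+1)}$, so $(F,E)$ is $\tfrac12\eps$-quasiextremal in the sense of Definition~\ref{defn:QEpairs}. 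Theorem~\ref{maintheorem} applied to $(F,E)$ produces $\scriptb$ with $(B,B^\star)=(\pi(\scriptb),\pi^\star(\scriptb))$, $|B|\le|F|$, and $\langle T\chi_{E\cap B^\star},\chi_{F\cap B}\rangle\gtrsim\eps^{A+1}|E|^{d/(d+1)}|F|^{d/(d+1)}$; bounding the left side above by $\norm{T\chi_{E\cap B^\star}}_{d+1}|B|^{d/(d+1)}\le C|E\cap B^\star|^{d/(d+1)}|F|^{d/(d+1)}$ and cancelling $|F|^{d/(d+1)}$ gives $|E\cap\pi^\star(\scriptb)|\gtrsim\eps^{(A+1)(d+1)/d}|E|$. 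Part (ii) is parallel: pick $g\ge0$ with $\norm g_{(d+1)/d}=1$ and $\langle Tf,g\rangle\ge\tfrac12\eps\norm f_{(d+1)/d}$, so $(g,f)$ is $\tfrac12\eps$-quasiextremal in the sense of \eqref{eq:QEforfunctions}; feeding it to Theorem~\ref{thm:functions} gives $r,t>0$, sets $E,E_1$, and $\scriptb$ with $r\chi_E\le f$, $t\chi_{E_1}\le g$, $|\pi^\star(\scriptb)|\le|E|$, and $\langle T(r\chi_{E\cap B^\star}),t\chi_{E_1\cap B}\rangle\gtrsim\eps^{A+1}\norm f_{(d+1)/d}$. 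Bounding this by $C\norm{r\chi_E}_{(d+1)/d}\norm{t\chi_{E_1}}_{(d+1)/d}\le C\norm{r\chi_E}_{(d+1)/d}$ yields $\norm{r\chi_E}_{(d+1)/d}\gtrsim\eps^{A+1}\norm f_{(d+1)/d}$; bounding it instead by $Cr|E\cap B^\star|^{d/(d+1)}\cdot t|E_1|^{d/(d+1)}\le Cr|E\cap B^\star|^{d/(d+1)}$ and dividing by $r|E|^{d/(d+1)}=\norm{r\chi_E}_{(d+1)/d}\le\norm f_{(d+1)/d}$ yields $|\pi^\star(\scriptb)\cap E|\gtrsim\eps^{(A+1)(d+1)/d}|E|$.

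For the converse in (i), let $B^\star=\pi^\star(\scriptb)$, $E\subseteq B^\star$, and $\tau=|E|/|B^\star|$. By H\"older's inequality in Lorentz spaces, $\langle T\chi_E,\chi_B\rangle\le C\norm{T\chi_E}_{L^{d+1,\infty}}\norm{\chi_B}_{L^{(d+1)/d,1}}\sim\norm{T\chi_E}_{L^{d+1,\infty}}|B|^{d/(d+1)}$, so it suffices to prove $\langle T\chi_E,\chi_B\rangle\gtrsim\tau\,|B|^{d/(d+1)}|B^\star|^{d/(d+1)}$. Since $\scriptb\subset\scripti$ and $\pi(\scriptb)\subseteq B$, one has $\langle T\chi_E,\chi_B\rangle=c\,|\scripti\cap(B\times E)|\ge c\,|\scriptb\cap(\reals^d\times E)|$, and writing Lebesgue measure on $\scripti$ in the coordinates $(x',x_d,x_\star')$ and changing $x_d\mapsto(x_\star)_d$ (unit Jacobian) gives $|\scriptb\cap(\reals^d\times E)|=\int_E|\{x':(x',x_\star)\in\scriptb\}|\,dx_\star$. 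The defining inequalities of $\scriptb$, in particular \eqref{dualellipses}, force each slice $\{x':(x',x_\star)\in\scriptb\}$ to be comparable to the full box $\prod_j\{|\langle x'-\bar x',e_j\rangle|<r_j\}$, of measure $\sim\prod_j r_j$, uniformly over $x_\star\in B^\star$ --- this uniformity is exactly the computation behind Proposition~\ref{prop:trulyQE}. Hence $|\scriptb\cap(\reals^d\times E)|\gtrsim|E|\prod_j r_j\sim\tau\,|B^\star|\prod_j r_j\sim\tau\,|\scriptb|\gtrsim\tau\,|B|^{d/(d+1)}|B^\star|^{d/(d+1)}$, the last step again by Proposition~\ref{prop:trulyQE}. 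This gives $\norm{T\chi_E}_{L^{d+1,\infty}}\gtrsim\tau|B^\star|^{d/(d+1)}=\tau^{1/(d+1)}|E|^{d/(d+1)}$, i.e. the asserted bound with $C=1/(d+1)$.

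For (iii): the kernel of $T$ is nonnegative, so $|Tf|\le T|f|$ pointwise and $\norm{T|f|}_{d+1}\ge\norm{Tf}_{d+1}\ge\eps\norm f_{(d+1)/d}$; applying (ii) to $|f|$ gives $r>0$, a set $E$ with $r\chi_E\le|f|$ and $\norm{r\chi_E}_{(d+1)/d}\ge c\eps^C\norm f_{(d+1)/d}$, and $\scriptb$ with $|\pi^\star(\scriptb)|\le|E|$ and $|E\cap\pi^\star(\scriptb)|\ge c\eps^C|E|$. Partition $E\cap\pi^\star(\scriptb)$ into six pieces according to the value of $\arg f$ modulo $2\pi$ and let $E''$ be one on which $\arg f$ stays within $\pi/6$ of some constant $\theta_0$; then $|E''|\ge\tfrac16|E\cap\pi^\star(\scriptb)|\gtrsim\eps^C|\pi^\star(\scriptb)|$, and on $E''$ we have $|f|\ge r$ and $\Re(e^{-i\theta_0}f)\ge\tfrac12|f|$. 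Set $\varphi=\tfrac12 e^{i\theta_0}\chi_{E''}$; this is a $C\eps^{-C}$-bump adapted to $\scriptb$, since $\varphi$ is bounded by $\tfrac12$, supported in $\pi^\star(\scriptb)$, and occupies a fraction $\gtrsim\eps^C$ of it. For $x\in E''$, $|f-r\varphi|^2=|f|^2-r\Re(e^{-i\theta_0}f)+\tfrac14 r^2\le|f|^2-\tfrac12 r|f|+\tfrac14 r^2\le|f|^2-\tfrac14 r|f|$, using $|f|\ge r$, so by the concavity of $s\mapsto s^{(d+1)/(2d)}$ one gets $|f-r\varphi|^{(d+1)/d}\le|f|^{(d+1)/d}-c_d\,r|f|^{1/d}$ on $E''$, while $f-r\varphi=f$ off $E''$. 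Integrating,
\[
\norm{f-r\varphi}_{(d+1)/d}^{(d+1)/d}\le\norm f_{(d+1)/d}^{(d+1)/d}-c_d\,r\!\int_{E''}|f|^{1/d}\le\norm f_{(d+1)/d}^{(d+1)/d}-c_d\,r^{(d+1)/d}|E''|,
\]
and $r^{(d+1)/d}|E''|\gtrsim\eps^C r^{(d+1)/d}|E|=\eps^C\norm{r\chi_E}_{(d+1)/d}^{(d+1)/d}\gtrsim\eps^{C'}\norm f_{(d+1)/d}^{(d+1)/d}$. Since $(1-u)^{1/(d+1)}\le1-\tfrac u{d+1}$, this gives $\norm{f-r\varphi}_{(d+1)/d}\le(1-c\eps^{C'})\norm f_{(d+1)/d}$, which is \eqref{complexvaluedconverse}.

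The main obstacle is the converse half of (i): everything else reduces mechanically to the pair theorems and the basic inequality, but that converse needs the structural fact that $\pi^\star\colon\scriptb\to\pi^\star(\scriptb)$ has essentially constant fibers, so that a density-$\tau$ subset of $\pi^\star(\scriptb)$ lies over a $\sim\tau$-fraction of $\scriptb$ by volume. This is precisely what makes Proposition~\ref{prop:trulyQE} true, and I would extract it from the verification in \S\ref{section:trulyQE}. The one point to watch in (iii) --- that $|f|$ may far exceed $r$ on $E''$, so that subtracting a height-$r$ bump changes $|f|$ only slightly there --- is disposed of by the pointwise estimate above, which shows that the $L^{(d+1)/d}$ mass actually removed is still of order $r^{(d+1)/d}|E''|$, and hence of order $\eps^{C'}\norm f_{(d+1)/d}^{(d+1)/d}$.
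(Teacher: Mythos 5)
Parts (i) and (ii) of your proposal are sound. The forward half of (i) — extracting a super-level set $F$ of $T\chi_E$, noting $(F,E)$ is an $\eps/2$-quasiextremal pair, and applying Theorem~\ref{maintheorem} — is exactly what the paper means by ``a weakening of Theorem~\ref{maintheorem},'' and your converse is in the same spirit as the paper's sketch: the paper enlarges the ball by a constant factor and uses $T^*(\chi_B)\gtrsim\prod_j r_j$ at every point of $\pi^\star(\scriptb)$, while you bound the $x'$-slices of $\scriptb$ over points of $\pi^\star(\scriptb)$ from below by a constant fraction of the box $\prod_j(-r_j,r_j)$; both rest on the cancellation $2\langle\Delta_x,\Delta_y\rangle$ together with $r_jr_j^\star=\rho$ from \S\ref{section:trulyQE}, and the Lorentz duality step is fine. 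Your (ii) takes a genuinely different route from the paper: the paper deduces it from the Lorentz bound of Theorem~\ref{thm:Lorentz} plus interpolation, isolating a dominant dyadic level $E_J$ and then invoking part (i), whereas you dualize ($g\ge0$ nearly attaining $\norm{Tf}_{d+1}$) and feed the pair $(g,f)$ to Theorem~\ref{thm:functions}. Since Theorem~\ref{thm:functions} is proved independently of Theorem~\ref{thm:single}, there is no circularity, and your derivation of all four conclusions from the two ways of bounding $\langle T(r\chi_{E\cap B^\star}),t\chi_{E_1\cap B}\rangle$ is correct and arguably shorter than the paper's.

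Part (iii), however, has a genuine gap: the function $\varphi=\tfrac12 e^{i\theta_0}\chi_{E''}$ is not a $C\eps^{-C}$--bump function in the sense the paper defines. A bump function must be of the form $\psi\circ\Phi_\scriptb$ with $\psi\in C^1$ supported in $Q_0$, with $\norm{\psi}_{C^0}\ge 1$ and a quantitative bound on $\norm{\psi}_{C^1}$; the pullback of the indicator of an arbitrary measurable set $E''$ is not $C^1$ (and your sup norm is $\tfrac12<1$, a lesser defect). This is not a cosmetic point: as the discussion following the theorem explains, the $C^1$ control in normalized coordinates is precisely what yields an {\it a priori} lower bound on $\norm{T(r\varphi)}_{d+1}$, i.e.\ what makes $r\varphi$ itself a quasiextremal and gives the statement its structural content; with a rough indicator in place of $\varphi$ one only recovers a weakened version of (ii). The missing step is the mollification in the paper's sketch: pull the dominant piece back to the cube via $\Phi_\scriptb$, split it into low and high frequencies at scale $\eps^{-A}$, use the $L^2$-Sobolev smoothing of $T$ (interpolated with the uniform $L^\infty$ bound coming from the support and size control) to show $T$ of the high-frequency part is negligible in $L^{d+1}$, and take the low-frequency part times a spatial cutoff as the $C^1$ bump. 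Your pointwise computation — that subtracting a height-$r$ profile on a set of relative measure $\gtrsim\eps^C$ removes $L^{(d+1)/d}$ mass $\gtrsim\eps^{C'}\norm{f}_{(d+1)/d}^{(d+1)/d}$, even where $|f|\gg r$ — is fine as far as it goes, but it must be run with such a smoothed, $\scriptb$-adapted profile (or some substitute argument controlling the $C^1$ norm of $\psi$), not with $\chi_{E''}$.
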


$L^{p,r}$ again denotes a Lorentz space, with the standard notation.
The notion of an $\eps$--bump function requires definition.
Let $Q_0$ be the open cube in $\reals^d$ consisting of
of all points $(x_1,\cdots,x_d)$ satisfying $|x_j|<1$ for all $1\le j\le d$.
To our set $\scriptb=\scriptb(\bar z,\earrow,r,r^\star)$ is associated a canonical one-to-one
correspondence $\Phi_\scriptb:\pi^\star(\scriptb)\to Q_0$.
Then an $\eps$--bump function associated to $\pi^\star(\scriptb)$ is any function 
of the form $\varphi= \psi\circ \Phi_\scriptb$
where $\psi\in C^1$ is supported in $Q_0$
and satisfies
$\norm{\psi}_{C^1}\le \eps^{-1}$
and
$\norm{\psi}_{C^0}\ge 1$.
An $\eps$--bump function is then
any such function associated to $\pi^\star(\scriptb)$ 
for some $\scriptb$.

In part (ii), there is of course a converse, by part (i).
Likewise in (iii), $r\varphi$ 
is a $c\eps^C$--quasiextremal.
For the condition 
$\norm{f-r\varphi}_{L^{(d+1)/d}}\le (1-c\eps^C)\norm{f}_{(d+1)/d}$
imposes upper and strictly positive lower bounds on the
coefficient $r$. 
$T$ is a unitary convolution operator on $L^2(\reals^d)$, as one sees by computing
the associated Fourier multiplier. 
Therefore $\norm{T\psi}_{L^2}$ satisfies a strictly positive lower bound.
Since $\psi$ has bounded $C^1$
norm and is supported in $Q_0$, $T\psi$ is also {\it a priori}
bounded above in $C^1$. 
An elementary argument shows that $|T\psi(x)|\le C_\eps |x|^{-1/2}$,
and $T\psi$ is supported in a tubular neighborhood of fixed width
of a paraboloid. 
% ?? Thus $\int_{|x|\ge R}|T\psi(x)|^{d}\,dx \le C_\eps R^{-1/4}$. 
These facts together imply an {\it a priori}
lower bound on $\norm{T\psi}_{L^{d+1}}$.
This is only a partial converse, to be sure; 
\eqref{complexvaluedconverse} does not directly imply that $f$ is a quasiextremal.

\medskip
The symbols $c,C$ are sometimes used to denote 
positive finite constants whose values may change from one
occurrence to the next. Typically $c$ will be assumed to be sufficiently
small, while $C$ will be sufficiently large, perhaps depending
on earlier values of $c,C$, to ensure that certain
inequalities hold. Thus an assertion $\delta\le C\eps^c$, where
$\delta$ depends on $\eps$ and perhaps on certain other parameters in some fashion,
means that there exist $c>0$ and $C<\infty$ such that the inequality holds,
uniformly for all $\eps$ in the relevant range and uniformly in the other
parameters as well.

\medskip
This paper is the first of a series of works 
% \cite{christextremal}, \cite{christxue}, \cite{christradonextremals}
treating aspects of the meta-question: If the ratio
$\Phi(f)=\|Tf\|_{q}/\|f\|_p$ is large, then what are the 
properties of $f$? The word ``large'' admits various interpretations.
The sequel \cite{christextremal} proves that there exist functions
which extremize $\Phi$, for $(p,q)=((d+1)/d,d+1)$.
In \cite{christxue}, qualitative properties of arbitrary critical
points of $\Phi$ are studied.
The paper \cite{christradonextremals} demonstrates an equivalence
between the inequality studied here, and a certain inequality for
the Radon transform, and explicitly identifies all extremizers for both.

\medskip
I am indebted to Betsy Stovall 
for pointing out the formulation 
\eqref{eq:strongerconclusionA},
\eqref{eq:strongerconclusionB}
of Theorem~\ref{maintheorem},
for innumerable other valuable comments,
and for a thorough proofreading of the manuscript.

\section{Comments}

\subsection{Motivation}
This investigation is motivated by broader considerations.
It is an open problem to determine all the $L^p\to L^q$ inequalities
for all generalized Radon transforms of the type described above.
In many concrete cases, one can guess certain families 
of pairs $(\scripte,\scripte^\star)$ which dictate all the $L^p\to L^q$ inequalities.
One expects that such pairs should fall into finitely many classes,
with each class depending on a small finite number of 
continuous parameters, and that the sets $\scripte,\scripte^\star$ 
should have rather simple geometry.
However, for the general Radon-like transform as described above,
satisfying the condition that $L^p$ is mapped to $L^q$ for some
$q$ strictly greater than $p$, or equivalently (in a localized situation) 
that $L^2$ is mapped to some Sobolev space of finite order,
it is quite unclear how to describe a natural family of such pairs
in terms of $T$ and the associated geometry.
Our second aim is to shed some light on their structure in general,
by examining a basic special case.
Thirdly, and still more speculatively, we hope that 
the development of more refined inequalities
might lead to progress on the basic $L^p\to L^q$ inequalities.

In the corank one case in which both $T$ and its transpose are defined
by integration over one-dimensional manifolds,
the natural pairs are associated to a two-parameter family of
Carnot-Caratheodory balls  in $\scripti$ \cite{taowright}.
For the fundamental example of convolution with the measure $dt$ 
on the curve $(t,t^2,t^3,\cdots,t^d)$ in $\reals^d$, 
an analogue of Theorem~\ref{maintheorem} can be deduced from the analysis
in \cite{ccc}.
More generally, we believe that
a weaker analogue for the general corank one case 
could be deduced from the analysis of
Tao and Wright \cite{taowright}.

\subsection{Symmetries imply a plethora of quasiextremals}
In addition to one-parameter dilation symmetries and rotation 
symmetries (there is a natural action of $O(d-1)$),
our operator enjoys further symmetries which are perhaps less immediately visible.
Adopt coordinates $x=(x',t)$, $x^\star 
= (x'_\star,t^\star)\in\reals^{d-1}\times\reals^d$.
After the substitutions
\begin{equation} \label{changecoords}
(x',t)\mapsto (x',t+|x'|^2),
\qquad
(x'_\star,t^\star)\mapsto (x'_\star,t^\star-|x'_\star|^2),
\end{equation}
The equation $t^\star-t = |x'-x'_\star|^2$
for the incidence manifold
becomes 
$t^\star-t = 2x'\cdot x'_\star$.
In these new coordinates there are manifest symmetries
\begin{equation}
\label{manifestsymmetries}
(x',t)\mapsto (Ax',t),
\qquad
(x'_\star,t^\star)\mapsto
((A^*)^{-1}x'_\star,t^\star)
\end{equation}
where $A$ is any invertible linear endomorphism of $\reals^{d-1}$,
and $A^*$ is its transpose.
The group of all such symmetries is described in greater detail
in \cite{christextremal}.

Closely related is a certain
degeneracy enjoyed by $\scripti$. Namely, for any $1\le k\le d-1$,
there exist manifolds $Y,Y^\star$ of $\reals^d$, of dimensions
$k$ and $d-1-k$ respectively,
such that $Y\times Y^\star\subset \scripti$.
Indeed, 
identify $\reals^d$ with $\reals^{k}\times\reals^{d-1-k}\times\reals^1$,
and take 
$Y=\set{(s;0;-|s|^2): s\in\reals^k}$
and
$Y^\star=\set{(0;t;|t|^2): t\in\reals^{d-1-k}}$.
The rotation symmetry produces large families of such pairs of manifolds
from these.

In this same way one sees that incidence manifolds
$\tilde\scripti$ defined by 
$t-t^\star = \sum_{j=1}^{d-1}c_j|x_j-x^\star_j|^2\}$,
with all $c_j$ nonzero,
are equivalent to $\scripti$ under the action of
${\rm Diff}(\reals^d)\times{\rm Diff}(\reals^d)$;
the signs of the coefficients $c_j$ play no role.

The substitution \eqref{changecoords} is related to
an equivalent description in terms of the Heisenberg group.
$\heis^{d-1}$ can be defined as
a real Lie group of dimension $2d-1$, with coordinates $(y,y^\star,t)
\in\reals^{d-1}\times\reals^{d-1}\times\reals^1$,
for which the left-invariant vector fields are spanned
by $V_j = \p_{y_j}+y^\star_j\p_t$ for $1\le j\le d-1$,
$V_j^\star = \p_{y^\star_j}-y_j\p_t$,
and $T=\p_t$.
The tangent spaces of the level sets of the two
projections  $\pi(y,y^\star,t)=(y,t+y\cdot y^\star)$
and $\pi^\star(y,y^\star,t)=(y^\star,t-y\cdot y^\star)$
of $\heis^{d-1}$ onto $\reals^d$
are spanned by $\{V^\star_j\},\set{V_j}$, respectively.
$\heis^{d-1}$ embeds into $\reals^d\times\reals^d$ via 
$\pi\times\pi^\star$
% $(y,y^\star,t) \mapsto (\pi(y,y^\star,t),\pi^\star(y,y^\star,t))$ 
and is thereby identified with the incidence manifold.
This geometric structure is precisely the one described above.

In this model, pairs of manifolds $Y\subset\reals^d$, $Y^\star\subset\reals^d$ 
with $Y\times Y^\star\subset\scripti\simeq\heis^{d-1}$
have a natural connection with the Lie algebra structure.
If $\frakv\subset\Span\{V_j\}$ and $\frakv^\star\subset\Span\{V_i^\star\}$
are vector subspaces satisfying $[\frakv,\frakv^\star]=0$,
then their images $Y,Y^\star$ 
under the exponential map form such a pair. Moreover, for any $\frakv$,
the dimension of its commutator is $d-1-\dim(\frakv)$.

$\reals^{d-1}\times\reals^{d-1}$ 
has a natural symplectic structure, and
its linear symplectic
automorphisms act naturally on $\heis^{d-1}$ via group automorphisms. 
A certain subgroup acts on $\reals^d\times\reals^d$ by
transformations which leave invariant the incidence manifold
$\scripti$, as described by \eqref{manifestsymmetries}. 
These and other linear symmetries of $\reals^d\times\reals^d$
which preserve $\scripti$,
such as dilations and joint translations in the original coordinate system,
produce all of the quasiextremals described 
in Definition~\ref{balldefinition} from a single quasiextremal.

\subsection{A generalization}
Our operator is prototypical of a class of 
Radon-like transforms characterized by a certain nondegeneracy property. 
Suppose that $\scripti\subset\reals^{d+d}$ is a smooth
manifold of dimension $2d-1$ equipped with submersions $\pi,\pi^\star$ 
mapping $\scripti$ to the two factors $\reals^d$. 
Suppose that the two foliations of  $\scripti$ defined by $\pi,\pi^\star$ are
transverse to one another.
We work only in a sufficiently small relatively compact
subset of $\reals^{d+d}$.
The incidence manifold $\scripti$ is foliated by two transverse
families of $d-1$-dimensional leaves, the level sets of $\pi,\pi^\star$. 
For each $z\in\scripti$ let $T_z,T^\star_z$ denote the tangent spaces to these
leaves, respectively.
Choose a nowhere-vanishing one form $\eta$ on $\scripti$ that annihilates
$T_z+T^\star_z$ at each $z\in\scripti$.
Then $(V,W)\mapsto \eta([V,W])$ defines a skew-symmetric bilinear
form on each subspace $T_z+T^\star_z$. (To define $\eta([V,W])$,
extend $V,V^\star$ to sections in a neighborhood of $z$, form the Lie
bracket, and evaluate; the result is independent of the choices of extensions.)
The general class of operators we have in mind is characterized by the nondegeneracy
of this bilinear form. 

For the generic incidence structure enjoying this nondegeneracy property,
the family of all quasiextremals ought to be smaller, in some natural sense,
than for the particular one studied here.
For such a geometric structure, 
for any manifolds $Y,Y^\star
\subset\reals^d$ 
satisfying $Y\times Y^\star\subset\scripti$,
the sum of the dimensions of $Y,Y^\star$ cannot exceed $d-1$.
For generic structures there exist no such pairs $Y,Y^\star$,
each having strictly positive dimension, with dimensions summing to $d-1$. 
In particular, this is so for another basic example,
convolution with surface measure on the unit sphere in $\reals^d$,
in which $\scripti=\{(x,x^\star)\in\reals^{d+d}: |x-x^\star|=1\}$.
In this case there exist such pairs satisfying $\dim(Y)+\dim(Y^\star)=d-2$,
but not $d-1$.
Stovall \cite{betsyspheres} 
has extended the method of this paper to characterize quasiextremals
for the corresponding inequality for that operator, 
and has found that quasiextremals there, while still numerous, are 
in a natural sense in one-to-one correspondence with a {\em proper} subset
of the set of all quasiextremals here.

\section{Parametrization of subsets of $E,E^\star$}

We now begin the proof of Theorem~\ref{maintheorem}.
Let $E,E^\star\subset\reals^d$ be measurable sets having finite, positive measures.
Define
$\alpha,\alpha_{\star}$  by
\begin{equation}
\alpha|E| = \alpha_{\star}|E^{\star}| = \scriptt(E,E^{\star}).
\end{equation}
As was emphasized in \cite{ccc}, these average numbers of incidences
play a fundamental role in this type of problem, as they do in discrete analogues.
In the case where $\pi,\pi^{\star}$ both have corank one,
Tao and Wright \cite{taowright} observed that $\alpha,\alpha_{\star}$
can be directly interpreted as radii of Carnot-Caratheodory
balls in $\scripti$. In the present situation, the ``balls'' $\scriptb\subset \scripti$
are no longer determined by their centers $\bar z$
and these two parameters; for $d>2$ there is quite a bit of additional freedom.

\begin{lemma} \label{lemma:tower}
There exist a point $\bar x\in E$, a measurable set $\Omega_1\subset\reals^{d-1}$,
and a measurable set $\Omega\subset\Omega_1\times \reals^{d-1}$
such that
\begin{gather}
|\Omega_1|= c\alpha
\\
\bar x-(s,|s|^2)\in E^\star 
\text{ for each } s\in\Omega_1
\\
|\set{t: (s,t)\in\Omega}| = c\alpha_{\star} \ \text{ for each } s\in \Omega_1
\\
\label{Einclusion}
\bar x-(s,|s|^2)+(t,|t|^2)\in E
\text{ for each } (s,t)\in \Omega.
\end{gather}
\end{lemma}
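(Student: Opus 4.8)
The plan is a two-step popularity (pigeonholing) argument built on the identity
$\scriptt(E,E^\star)=\langle T\chi_{E^\star},\chi_E\rangle=\langle\chi_{E^\star},T^\star\chi_E\rangle$
together with the pointwise formulas
\[
T\chi_{E^\star}(x)=\big|\set{t\in\reals^{d-1}:\ x-(t,|t|^2)\in E^\star}\big|,
\qquad
T^\star\chi_E(y)=\big|\set{t\in\reals^{d-1}:\ y+(t,|t|^2)\in E}\big|,
\]
the second obtained from the first by the substitution $(x',x_d)=(y'+t,\,y_d+|t|^2)$ inside $\langle Tf,g\rangle$. I will take $c=\tfrac12$; any smaller positive constant works as well. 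We may assume $\scriptt(E,E^\star)>0$, since otherwise $\alpha=\alpha_\star=0$ and the lemma holds trivially with $\Omega_1=\Omega=\emptyset$ and $\bar x$ any point of $E$; and $\scriptt(E,E^\star)<\infty$ by \eqref{restrictedweaktype}, so $\alpha,\alpha_\star\in(0,\infty)$. Finally, replacing $E$ and $E^\star$ by Borel subsets of the same measures changes none of the quantities above and only strengthens the membership conclusions of the lemma, so we may assume $E,E^\star$ are Borel; then every preimage formed below is Borel and no measurability issue arises.

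\emph{Step 1: a popular subset of $E^\star$, and the point $\bar x$.} Since $\int_{E^\star}T^\star\chi_E=\alpha_\star|E^\star|$ while $T^\star\chi_E<\tfrac12\alpha_\star$ off the Borel set $E^\star_1:=\set{y\in E^\star:\ T^\star\chi_E(y)\ge\tfrac12\alpha_\star}$, we have $\int_{E^\star\setminus E^\star_1}T^\star\chi_E<\tfrac12\alpha_\star|E^\star|$, hence
\[
\int_E T\chi_{E^\star_1}=\langle\chi_{E^\star_1},T^\star\chi_E\rangle=\int_{E^\star_1}T^\star\chi_E>\tfrac12\alpha_\star|E^\star|=\tfrac12\scriptt(E,E^\star)=\tfrac12\alpha|E|.
\]
Therefore $\set{x\in E:\ T\chi_{E^\star_1}(x)>\tfrac12\alpha}$ has positive measure; choose $\bar x$ to be any point of it. This $\bar x$, which lies in $E$, is the point of the lemma.

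\emph{Step 2: the sets $\Omega_1$ and $\Omega$.} Set $\Omega_1':=\set{s\in\reals^{d-1}:\ \bar x-(s,|s|^2)\in E^\star_1}$, so $|\Omega_1'|=T\chi_{E^\star_1}(\bar x)>\tfrac12\alpha$. As $r\mapsto|\Omega_1'\cap B(0,r)|$ is continuous and nondecreasing, running from $0$ up to $|\Omega_1'|>\tfrac12\alpha$, there is $r_0$ with $|\Omega_1'\cap B(0,r_0)|=\tfrac12\alpha$; put $\Omega_1:=\Omega_1'\cap B(0,r_0)$, so $|\Omega_1|=c\alpha$, and for every $s\in\Omega_1$ the point $y:=\bar x-(s,|s|^2)$ lies in $E^\star_1\subset E^\star$, which is the second conclusion. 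For such $s$, the slice $A_s:=\set{t\in\reals^{d-1}:\ \bar x-(s,|s|^2)+(t,|t|^2)\in E}$ equals $\set{t:\ y+(t,|t|^2)\in E}$, whence $|A_s|=T^\star\chi_E(y)\ge\tfrac12\alpha_\star$ by the definition of $E^\star_1$. Now set $\rho(s):=\inf\set{\rho>0:\ |A_s\cap B(0,\rho)|\ge\tfrac12\alpha_\star}$ for $s\in\Omega_1$; by continuity of $\rho\mapsto|A_s\cap B(0,\rho)|$ one has $|A_s\cap B(0,\rho(s))|=\tfrac12\alpha_\star$, and $s\mapsto\rho(s)$ is measurable because $(s,\rho)\mapsto|A_s\cap B(0,\rho)|$ is jointly measurable (Tonelli, $A_s$ being the $s$-slice of the Borel set $\set{(s,t):\ \bar x-(s,|s|^2)+(t,|t|^2)\in E}$). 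Finally let
\[
\Omega:=\set{(s,t)\in\Omega_1\times\reals^{d-1}:\ t\in A_s\cap B(0,\rho(s))},
\]
a measurable subset of $\Omega_1\times\reals^{d-1}$. Then $|\set{t:\ (s,t)\in\Omega}|=\tfrac12\alpha_\star=c\alpha_\star$ for every $s\in\Omega_1$, and $(s,t)\in\Omega$ forces $t\in A_s$, i.e.\ $\bar x-(s,|s|^2)+(t,|t|^2)\in E$, which is \eqref{Einclusion}. All four assertions of the lemma hold.

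The scheme itself is routine; the points needing care are (i) choosing the radii $\rho(s)$ measurably in $s$ and (ii) arranging the \emph{exact} equalities $|\Omega_1|=c\alpha$ and $|\set{t:\ (s,t)\in\Omega}|=c\alpha_\star$ rather than mere comparabilities. Both are handled by the atomlessness of Lebesgue measure (continuity of $r\mapsto|S\cap B(0,r)|$ for fixed $S$) and the joint measurability furnished by Tonelli's theorem once $E,E^\star$ have been taken Borel, so I anticipate no real obstacle beyond this bookkeeping.
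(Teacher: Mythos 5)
Your proof is correct, and it is essentially the same argument as the one the paper relies on: the paper defers the proof of this lemma to \cite{ccc}, where it is obtained by exactly this iterated popularity/refinement scheme (restrict to the subset of $E^\star$ where $T^\star\chi_E$ is at least half its average, then pick $\bar x$ where $T\chi_{E^\star_1}$ exceeds half its average, then refine the fibers). Your additional bookkeeping --- the Borel reduction to make the pointwise slices measurable, the trimming by balls to get exact equalities $|\Omega_1|=c\alpha$ and $|\{t:(s,t)\in\Omega\}|=c\alpha_\star$, and the measurable choice of $\rho(s)$ --- is handled correctly.
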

Here $c>0$ is a constant, independent of $E,E^\star,\alpha,\alpha_{\star}$. 
For the proof of Lemma~\ref{lemma:tower} see \cite{ccc}.
The roles of $E,E^\star$ in this lemma can be reversed, thus
producing certain subsets of $E^\star$.

Define
\begin{gather}
\label{tildeOmegadefn}
\tilde\Omega = \{(s,u): (s,s+u)\in\Omega\}
\\
\scriptf(s)=\{u: (s,u)\in\tilde\Omega\}.
\end{gather}
Then $|\scriptf(s)|=c\alpha_{\star}$ for all $s\in\Omega_1$.
Making the change of variables $t=u+s$,
\begin{equation}
-(s,|s|^2)+(t,|t|^2)
= (u,2s\cdot u + |u|^2) = \Psi(s,u).
\end{equation}
Define
$H(u,r) = (u,\tfrac12(r-|u|^2))$ and $\tilde E = H(E)$; then 
$|E| = 2|\tilde E|$. Defining
\begin{equation}\label{Phidefn}
\Phi(s,u) = (u,s\cdot u),
\end{equation}
we have $H\circ\Psi = \Phi$ and therefore, by \eqref{Einclusion},
%$\Phi(\tilde\Omega)\subset E-\bar x$ and therefore
\begin{equation}|E|\ge 2|\Phi(\tilde\Omega)|.\end{equation}

Following the strategy of \cite{ccc},
tather than seeking an upper bound for $\scriptt(E,E^\star)$
directly in terms of the measures of $E,E^\star$,
we will establish a lower bound on $|E|$ of the form
\begin{equation} \label{imagelowerbound}
|\Phi(\tilde\Omega)|
\ge c\alpha_{\star}^{d/(d-1)}\alpha^{1/(d-1)}.
\end{equation}
Since $\Phi(\tilde\Omega)\subset E$,
this implies that $|E|
\ge c\alpha_{\star}^{d/(d-1)}\alpha^{1/(d-1)}$.
By invoking the definitions of $\alpha,\alpha_{\star}$ 
one finds that this is equivalent to the endpoint restricted weak type inequality
$\scriptt(E,E^\star)\le C|E|^{d/(d+1)}|E^\star|^{d/(d+1)}$. 

\section{Slicing bound}

For polynomial mappings between spaces of equal dimensions, a bound
for $|\Phi(\tilde\Omega)|$
can be obtained \cite{ccc},\cite{taowright},\cite{christerdogan}
simply by writing $|\Phi(\tilde\Omega)|\ge c
\int_{\tilde\Omega} |J|$, where $J$ is the Jacobian determinant of $\Phi$
and $c$ is a positive constant, depending on $\Phi$, 
which takes into account the failure of $\Phi$ to be injective.
The basic difficulty in establishing any lower bound on $|\Phi(\tilde\Omega)|$, 
from this perspective, is that $\Phi$ maps a space of
dimension $2d-2$ to a space of lower (if $d>2$) dimension $d$.

In non-equidimensional circumstances, a simple way to obtain a bound 
is via a slicing argument, as was done in  \cite{ccc}. 
One chooses some submanifold $M$ of the domain of $\Phi$ having
the same dimension as the range of $\Phi$, and has the trivial
bound $|\Phi(\tilde\Omega)|\ge|\Phi(M\cap\tilde\Omega)|$;
the latter can then be analyzed by integrating the associated Jacobian.
One bound obtainable for the present situation via slicing is as follows.

\begin{lemma}[Slicing Lemma] \label{lemma:slice}
Let $B\subset\reals^d$ be the (open) unit ball,
and let $\Phi:\reals^{d-1}\times\reals^{d-1}\to\reals^{d-1}\times\reals^1$
be the mapping $\Phi(s,u) = (u,s\cdot u)$.
Let $A:\reals^{d-1}\to\reals^{d-1}$ be a symmetric invertible linear transformation.
Suppose that $\omega\subset A(B)\times\reals^{d-1}$.
Then 
\begin{equation} \label{slicebound}
|\Phi(\omega)| 
\ge c |\det A|^{-1}\int_\omega |Au|\,du\,ds.
\end{equation}
\end{lemma}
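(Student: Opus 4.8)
The plan is to slice the source space $\reals^{d-1}\times\reals^{d-1}$ by fixing $u$ and integrating over the $s$-fibers, then to change variables so that the map becomes the standard projection-plus-linear-form and a Fubini argument applies. Concretely, for fixed $u\in A(B)$, write $u = Av$ with $|v|<1$, and consider the restriction of $\Phi$ to the affine slice $\{u\}\times\reals^{d-1}$. On this slice $\Phi(s,u) = (u, s\cdot u)$ is an affine map onto $\{u\}\times\reals^1$ whose linear part is $s\mapsto s\cdot u$; its "Jacobian factor" in the fiber direction is $|u|$ (the norm of the covector $u$). So I would first record the fiberwise identity: for each fixed $u$,
\begin{equation*}
\big|\{u\}\times\{s\cdot u : (s,u)\in\omega\}\big|_{\reals^1}
\;\ge\; c\,|u|\cdot\big|\{s : (s,u)\in\omega\}\big|_{\reals^{d-1}}^{1/(d-1)}\cdot(\text{something})?
\end{equation*}
— but this is the wrong shape, since the left side is one-dimensional while the fiber of $\omega$ is $(d-1)$-dimensional. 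The correct move is instead: do \emph{not} try to capture the full fiber; rather, choose a one-dimensional sub-slice of the $s$-fiber on which $\Phi$ is injective, namely a segment in the direction of $u$ itself.

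So the key step is: for each $u$, pick the line $\ell_u = \{s_0 + \tau u/|u| : \tau\in\reals\}$ through (a point of) the fiber, parametrized by arclength $\tau$. Along $\ell_u$ the last coordinate of $\Phi$ is $s\cdot u = s_0\cdot u + \tau|u|$, which is a bijection onto an interval of length $|u|$ times the length of $\ell_u\cap(\text{fiber})$. This gives, after integrating $\tau$ against the $(d-2)$-dimensional measure of the perpendicular cross-sections of the $s$-fiber and using Fubini in the $s$-variable, the bound $|\Phi(\omega\cap(\{u\}\times\reals^{d-1}))|_{\reals^1}\ge c\,|u|\cdot(\text{average cross-section})$. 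To make this clean I would change variables $u = Av$, $du = |\det A|\,dv$, so that the constraint becomes $|v|<1$ — a fixed domain — and $|Au|$ in the integrand is untouched. Then integrating the fiberwise one-dimensional bound over $u\in A(B)$ and using that the preimages $\Phi^{-1}(\text{point})$ meet each slice boundedly often, one assembles
\begin{equation*}
|\Phi(\omega)| \;\ge\; c\int_{A(B)} |u|\,\Big(\text{measure of $u$-fiber of $\omega$}\Big)\,du,
\end{equation*}
and the $|\det A|^{-1}$ appears from comparing the product measure $du\,ds$ against the measure actually used in the slicing when we normalize the $v$-domain — more precisely, from the fact that $\Phi(\omega)$ lives in $\reals^d$ but we only exploit a $d$-dimensional worth of $\omega$, so the overcounting of the remaining $(d-2)$ dimensions of the $s$-fiber, which have diameter controlled by $A(B)$ having "width" set by $A$, costs a factor $|\det A|^{-1}$. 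This matches the right-hand side $c|\det A|^{-1}\int_\omega |Au|\,du\,ds$ once we note $|Au|$ there should read $|u|$ under $u=Av$; re-expressing in the original variable $u$ restores the stated form.

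The main obstacle I anticipate is handling the non-injectivity and the "extra" $d-2$ fiber dimensions quantitatively: $\Phi$ collapses a $(2d-2)$-dimensional set onto a $d$-dimensional one, so a naive coarea/Jacobian argument sees a vanishing Jacobian. The trick is that we must \emph{give up} on the collapsed directions and only integrate the honest one-dimensional Jacobian $|u|$ along the distinguished direction, while controlling that the set of collapsed directions doesn't help the image grow — which is exactly why only a \emph{lower} bound is asserted and why the factor $|\det A|^{-1}$ (the "cost" of the cross-section having linear size governed by $A^{-1}$ after normalization) is unavoidable. Getting the bookkeeping of these measure factors exactly right, and verifying the bounded-multiplicity of $\Phi$ on the chosen slices, is the delicate part; the rest is Fubini and a linear change of variables.
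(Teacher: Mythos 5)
Your slicing idea---fix $u$, observe that the image of the $u$-fiber is the one-dimensional set $\{u\}\times\{s\cdot u: (s,u)\in\omega\}$, and add up these disjoint contributions over $u$---can be made into a correct proof, but as written your plan has two genuine gaps. First, you misplace the hypothesis: $\omega\subset A(B)\times\reals^{d-1}$ confines the \emph{$s$-variable} to the ellipsoid, while $u$ is unrestricted; your steps ``for fixed $u\in A(B)$, write $u=Av$'' and ``change variables $u=Av$ so that the constraint becomes $|v|<1$'' normalize the wrong variable and never actually use the only geometric hypothesis in the lemma. Second, the step you leave vague is exactly where both factors $|Au|$ and $|\det A|^{-1}$ must come from. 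The correct fiberwise estimate is: writing $\hat u=u/|u|$ and $\omega_u=\{s:(s,u)\in\omega\}\subset A(B)$, Fubini gives $|\omega_u|\le |\{s\cdot\hat u: s\in\omega_u\}|\cdot\max_\tau \mathcal{H}^{d-2}\big(A(B)\cap\{s\cdot\hat u=\tau\}\big)$; since $A$ is symmetric, $A(B)$ has width $2|A\hat u|$ in direction $\hat u$, so by Brunn's theorem its maximal cross-section is $\le C_d|\det A|/|A\hat u|$, whence $|\{s\cdot u: s\in\omega_u\}|\ge c\,|\det A|^{-1}|Au|\,|\omega_u|$, and integrating in $u$ finishes the proof. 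Your text instead asserts a dimensionally incoherent fiber bound (``$\ge c|u|\cdot(\text{average cross-section})$'') and then attributes $|Au|$ and $|\det A|^{-1}$ to the substitution $u=Av$ (``$|Au|$ there should read $|u|$ under $u=Av$''), which is not a derivation and, taken literally, is wrong. Also, the injectivity/multiplicity issue you flag is a non-issue for this slicing: distinct $u$-slices have disjoint images, and within a slice only the Lebesgue measure of $\{s\cdot u\}$ is needed.

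For comparison, the paper sidesteps the cross-section estimate entirely: it changes variables in \emph{both} coordinates, $s=At$, $u=A^{-1}v$, so that by $A=A^{*}$ one has $\Phi(s,u)=\tilde A\,\Phi(t,v)$ with $\tilde A(y,r)=(A^{-1}y,r)$; this produces the factor $|\det A|^{-1}$ at once and reduces to the case $A=I$ with $t$ confined to the unit ball. It then slices the $t$-variable along lines $a+r\nu$, notes that $(r,v)\mapsto\Phi(a+r\nu,v)$ is an equidimensional, generically injective map with Jacobian $v\cdot\nu$, and averages over $a\in B\cap\nu^{\perp}$ and over unit vectors $\nu$ to recover $|v|$. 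Either route works, but your write-up, as it stands, is missing the estimate that makes your route go through and misassigns the origin of the two key factors.
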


\begin{proof}
Make the change of variables $s=At$, $u=A{}^{-1}v$, recalling that
$A$ is symmetric. 
Then $\Phi(s,u) = \tilde A\Phi(t,v)$ where
$\tilde A(y,r) = (A{}^{-1}y, r)$.
Therefore $|\Phi(\omega)| = |\det A|^{-1}\,|\Phi(\tilde\omega)|$
where $\tilde\omega = \{(t,v): (A^{-1}t,Av)\in\omega\}$.

Now $\tilde\omega\subset B\times\reals^{d-1}$.
Let $\nu\in\reals^{d-1}$ be any unit vector, and let 
$a\in\reals^{d-1}$ be any vector orthogonal to $\nu$.
Consider the mapping $\reals\times\reals^{d-1}\owns (r,v)\mapsto \Phi(a+r\nu,v)
\in\reals^{d-1}\times\reals^1$.
The image of $\tilde\omega_{a,\nu}
=\{(r,v): (a+r\nu,v)\in\tilde\omega\}$ under this mapping
lies in $\Phi(\tilde\omega)$, and this mapping is generically injective,
so since its Jacobian determinant equals $v\cdot\nu$,
\begin{equation}
|\Phi(\tilde\omega)| \ge \int_{\tilde\omega_{a,\nu}} |v\cdot\nu|\,dv\,dr.
\end{equation}
This holds for any $a\in \nu^\perp$; averaging over all $a\in B\cap \nu^\perp$
yields the bound
\begin{equation}
|\Phi(\tilde\omega)| \ge c\int_{\tilde\omega} |v\cdot\nu|\,dv\,dt.
\end{equation}
Averaging over all unit vectors $\nu$ gives
\begin{equation}
|\Phi(\tilde\omega)| \ge c\int_{\tilde\omega} |v|\,dv\,dt,
\end{equation}
from which the desired conclusion follows by reversing the change of variables.
\end{proof}

By itself, this bound is inadequate. For one thing, it is not given
that any sizable portion of $\Omega_1$ lies in any ellipsoid of controlled volume.
But there is an even more fundamental obstacle to the use of Lemma~\ref{lemma:slice}. 
Imagine that $|\Omega_1|=1$, that $\Omega_1$
is a subset of a Euclidean ball $B$ of radius $R\gg 1$,
and that $\Omega_1$ is rather evenly distributed throughout $B$,
up to some small spatial scale. 
Inequality \eqref{slicebound} then incorporates a factor of $R^{-(d-1)}$
resulting from the factor $|\det T|^{-1}$; it yields a weaker
bound as $R$ increases. But
according to our main theorem and the intuition underlying it, 
such a situation should be progressively farther from extremal
as $R$ increases, so we seek bounds which improve rather than
worsening as $R\to\infty$.
In contrast, the factor $|Au|$ in \eqref{slicebound}
does have the desired effect, penalizing $\omega$ (by guaranteeing
an improved lower bound for $|\Phi(\omega)|$ and hence
ultimately for $|E|$) if the variable $u$ is not mainly confined
to an appropriate ellipsoid.
If $A$ is $R$ times the identity where $R$ is large,
then for $d>2$, the factor of $R^1$ gained through the expression $|Au|$
is more than offset by the loss of $R^{-(d-1)}$ through $|\det A|^{-1}$.

In \S\ref{section:inflation} we will establish 
a second type of bound, which yields complementary information.
Each suffers from defects, but together they lead to the theorem.

\section{Approximation by convex sets}
In a sense appropriate for our purposes, any set in $\reals^n$ having
finite Lebesgue measure can be well approximated by a convex set,
that is, by an ellipsoid. 

\begin{lemma} \label{lemma:convexifyunbalanced}
For any $n\ge 1$
and $\etta>0$,
there exists $c>0$ with the following property.
For any Lebesgue measurable set $S\subset\reals^n$
satisfying $0<|S|<\infty$
there exists a bounded convex set $\scriptc\subset\reals^n$
so that
for any convex set $\scriptc'\subset\scriptc$,
\begin{equation} \label{convexexclusionunbalanced}
|\scriptc'|\le \tfrac12|\scriptc|
\Rightarrow |S\cap(\scriptc\setminus\scriptc')|
\ge c_0(|S|/|\scriptc|)^\etta |S|.
\end{equation}
\end{lemma}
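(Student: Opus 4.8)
The plan is to take $\scriptc$ to be (a fixed dilate of) the John ellipsoid, or more precisely an ellipsoid of minimal volume among those whose intersection with $S$ captures at least, say, half the mass of $S$ after a preliminary truncation. The key structural fact I want to exploit is the following dichotomy: either $S$ is already "concentrated" on a convex set of comparable measure --- in which case any proper convex subset $\scriptc'$ of at most half the volume must leave a definite fraction of $S$ behind, simply because the John ellipsoid of a convex body contains a constant-fraction-volume scaled copy --- or else $S$ spreads over a convex hull of much larger volume, and then the hypothesis $|\scriptc'| \le \tfrac12|\scriptc|$ forces $\scriptc\setminus\scriptc'$ to be a large proportion of $\scriptc$, inside which $S$ has a proportionate share. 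The exponent $\etta$ is the price paid for interpolating between these two regimes.

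Concretely, I would proceed as follows. \emph{Step 1 (normalization).} By an affine change of variables --- which changes neither the validity nor the form of \eqref{convexexclusionunbalanced}, since it preserves ratios of volumes of nested convex sets and the class of measurable sets --- reduce to the case where the minimal-volume ellipsoid $\scriptc$ containing (the bulk of) $S$ is the Euclidean ball $B(0,1)$. \emph{Step 2 (no thin slabs).} Use minimality of $\scriptc$: if a hyperplane slab $\{|x\cdot\nu| < \tau\}\cap\scriptc$ contained nearly all of $S\cap\scriptc$, one could shrink $\scriptc$ in the $\nu$ direction and obtain a smaller ellipsoid still capturing the bulk, contradicting minimality. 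This gives a quantitative lower bound: for every unit $\nu$, the portion of $S$ in $\scriptc$ lying in the slab of half-width $\tau$ is at most $C\tau^{?}$ times $|S\cap\scriptc|$ --- or, what I actually need, $S\cap\scriptc$ is not concentrated near $\p\scriptc$ in any direction. \emph{Step 3 (convex subsets are slab-like after erosion).} Given any convex $\scriptc'\subset\scriptc$ with $|\scriptc'|\le\tfrac12|\scriptc|$, the complement $\scriptc\setminus\scriptc'$ contains a spherical cap, hence a slab-like region of $\scriptc$ of definite width $\tau_0 \sim c$; more carefully, $|\scriptc'|\le\tfrac12|\scriptc|$ forces $\scriptc'$ to miss a cap of height comparable to a constant in some direction $\nu$. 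Combining with Step 2's lower bound on the mass of $S$ in that cap region yields $|S\cap(\scriptc\setminus\scriptc')| \ge c|S\cap\scriptc|$, and then $|S\cap\scriptc|\ge c(|S|/|\scriptc|)^\etta |S|$ must be arranged in the choice of $\scriptc$.

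\emph{Step 4 (getting the $\etta$-power mass inside $\scriptc$).} The point of the exponent is that I cannot both keep $|\scriptc|$ small and capture all of $S$; the standard device is to choose $\scriptc$ as follows. Order a covering of $S$ by homothetic ellipsoids at dyadic scales and let $\scriptc$ be the smallest ellipsoid with $|S\setminus\scriptc|$ smaller than a suitable power of $|S|/|\scriptc|$; a stopping-time / iteration argument (halving the captured mass at each stage, at the cost of a bounded factor in volume) produces a geometric series, and the resulting $\scriptc$ satisfies $|\scriptc|\le C_\etta$ and $|S\cap\scriptc|\ge |S| - (|S|/|\scriptc|)^\etta|S| \ge \tfrac12|S|$ once $|S|/|\scriptc|$ is small; when $|S|/|\scriptc|$ is not small, $\scriptc$ is already comparable to $S$ and the estimate is the easy John-ellipsoid case. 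Feeding $|S\cap\scriptc| \gtrsim |S|$ (or $\gtrsim (|S|/|\scriptc|)^\etta|S|$ in the residual regime) into Step 3 gives \eqref{convexexclusionunbalanced}.

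\emph{Main obstacle.} The delicate point is Step 2 together with its use in Step 3: turning the minimality of $\scriptc$ into a \emph{uniform-over-all-directions} lower bound on how much of $S\cap\scriptc$ sits away from the boundary, with a power of $|S|/|\scriptc|$ rather than a constant. If $S\cap\scriptc$ is allowed to hug $\p\scriptc$, a clever convex $\scriptc'$ (a slightly shrunken copy of $\scriptc$) could have volume close to $|\scriptc|$ --- but the hypothesis $|\scriptc'|\le\tfrac12|\scriptc|$ rules exactly this out, which is why the factor $\tfrac12$ is essential. Making the cap/slab geometry quantitative and checking it survives the affine normalization of Step 1 (so that "cap of definite height" in the normalized picture corresponds to something meaningful in the original) is where the real care is needed; everything else is a routine stopping-time argument and the classical fact that every convex body sits between an ellipsoid and its $n^{-1/2}$-dilate.
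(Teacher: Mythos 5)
Your Steps 2--3 are the load-bearing part of the argument, and they are not proved; as you yourself flag, ``making the cap/slab geometry quantitative'' is exactly where the content of the lemma lies. Two concrete problems. First, the dangerous convex subsets $\scriptc'$ are not only boundary-hugging caps: if $S\cap\scriptc$ happens to be concentrated well inside $\scriptc$, a small convex set around that core (of volume far below $\tfrac12|\scriptc|$) can retain almost all of the captured mass, and nothing in the minimality of an ellipsoid chosen by the single criterion ``smallest volume capturing half the mass'' visibly rules this out --- ruling it out for \emph{every} convex $\scriptc'$ of at most half the volume is essentially the statement of the lemma itself, so your scheme is circular unless Step 2 is turned into a uniform-over-all-directions (indeed over all convex excisions) mass lower bound, which you do not supply and for which the sketched ``flatten the ellipsoid'' variational argument does not obviously work (flattening may drop the captured mass below the constraint, and your exponent is literally left as ``$\tau^{?}$''). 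Second, Step 3 as written asserts $|S\cap(\scriptc\setminus\scriptc')|\ge c\,|S\cap\scriptc|$ with a constant independent of $|S|/|\scriptc|$; that is a strictly stronger statement than \eqref{convexexclusionunbalanced} and would itself require proof, and your Step 4 then redistributes the $\etta$-loss into the choice of $\scriptc$ in a way that is not made coherent (capturing ``all but $(|S|/|\scriptc|)^\etta|S|$'' of the mass is a different normalization from capturing half of it, and the stopping-time you describe acts on which ellipsoid captures how much of $S$, not on the exclusion property that the lemma actually demands).

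For comparison, the paper's proof needs none of the ellipsoid/John-position machinery: start with any bounded convex $C$ with $|C\cap S|\ge\tfrac34|S|$ and $|C|=2^m|S|$, and iterate the rule ``if some convex $C'\subset C$ with $|C'|=\tfrac12|C|$ retains at least $(1-c_02^{-\etta m})|S\cap C|$ of the mass, replace $C$ by $C'$ and $m$ by $m-1$.'' Since $\prod_{k\ge0}(1-c_02^{-\etta k})>\tfrac23$ for $c_0$ small, the process must stop at some $m\ge0$ (it cannot pass $m=0$, because a convex set of volume $\tfrac12|S|$ cannot contain more than $\tfrac12|S|$ of the mass of $S$), and the stopping condition \emph{is} \eqref{convexexclusionunbalanced}, with $2^{-\etta m}=(|S|/|C|)^\etta$. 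In other words, the localization onto a convex set and the non-concentration property are produced simultaneously by one greedy descent; if you want to salvage your write-up, replace Steps 1--3 by this descent (your Step 4 contains its germ) and drop the appeal to ellipsoid minimality and cap geometry entirely.
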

\noindent
It follows from \eqref{convexexclusionunbalanced} that
$|\scriptc|\ge c_\etta |S|$.
This result is a descendant of an idea of Tao and Wright \cite{taowright},
formulated originally in dimension one,
sharpened in \cite{christerdogan}, 
and generalized here to higher dimensions. The relevance of convex sets here
is an attribute of the particular operators studied in this paper; other sets
must play the corresponding role for other operators. Some related comments
are made in \S\ref{section:conjectures}.

We will require a variant.
A convex set $\scriptc\subset\reals^n$ is said to be balanced
if $x\in \scriptc\Rightarrow -x\in\scriptc$.
\begin{lemma} \label{lemma:convexify}
For any $n\ge 1$ and $\etta>0$, 
there exists $c>0$ with the following property.
For any Lebesgue measurable set $S\subset\reals^n$
satisfying $0<|S|<\infty$
there exists a bounded balanced convex set $\scriptc\subset\reals^n$
so that
for any balanced convex set $\scriptc'\subset\scriptc$,
\begin{equation} \label{convexexclusion}
|\scriptc'|\le \tfrac12|\scriptc|
\Rightarrow |S\cap(\scriptc\setminus\scriptc')|
\ge c(|S|/|\scriptc|)^\etta |S|.
\end{equation}
\end{lemma}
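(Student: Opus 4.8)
The plan is to deduce Lemma~\ref{lemma:convexify} from the unbalanced version Lemma~\ref{lemma:convexifyunbalanced} by replacing the convex set that the latter supplies with its smallest balanced convex overset. Given $S\subset\reals^n$ with $0<|S|<\infty$, I would apply Lemma~\ref{lemma:convexifyunbalanced} with the same $\etta$ to obtain a bounded convex set $\scriptc_0$ satisfying \eqref{convexexclusionunbalanced} (which, as the remarks preceding that lemma indicate, may even be taken to be an ellipsoid), and then set $\scriptc:=\operatorname{conv}(\scriptc_0\cup(-\scriptc_0))$. Since $\scriptc_0\cup(-\scriptc_0)$ is bounded and symmetric about the origin, $\scriptc$ is bounded, convex, and balanced; moreover $\scriptc_0\subseteq\scriptc$, so $|\scriptc|\ge|\scriptc_0|\ge c_\etta|S|$ and in particular $|S|\le|\scriptc|$.

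Now let $\scriptc'\subseteq\scriptc$ be any balanced convex set with $|\scriptc'|\le\tfrac12|\scriptc|$, and put $\scriptc'':=\scriptc'\cap\scriptc_0$, a convex subset of $\scriptc_0$. I would argue by cases on the size of $\scriptc''$. If $|\scriptc''|\le\tfrac12|\scriptc_0|$, then \eqref{convexexclusionunbalanced}, applied with $\scriptc''$ playing the role of $\scriptc'$ and $\scriptc_0$ that of $\scriptc$, gives $|S\cap(\scriptc_0\setminus\scriptc'')|\ge c_0(|S|/|\scriptc_0|)^\etta|S|$. Since any point of $\scriptc_0$ lying outside $\scriptc''=\scriptc'\cap\scriptc_0$ lies outside $\scriptc'$, we have $\scriptc_0\setminus\scriptc''\subseteq\scriptc\setminus\scriptc'$, and since $|\scriptc_0|\le|\scriptc|$ and $\etta>0$ we obtain $|S\cap(\scriptc\setminus\scriptc')|\ge c_0(|S|/|\scriptc|)^\etta|S|$, which is \eqref{convexexclusion}. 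In the remaining case $|\scriptc''|>\tfrac12|\scriptc_0|$, I claim a contradiction: because $\scriptc'=-\scriptc'$ we have $\scriptc'\cap(-\scriptc_0)=-\scriptc''$, hence $\scriptc'\supseteq\operatorname{conv}(\scriptc''\cup(-\scriptc''))$, and the geometric inequality stated below then forces $|\scriptc'|\ge|\operatorname{conv}(\scriptc''\cup-\scriptc'')|>\tfrac12|\operatorname{conv}(\scriptc_0\cup-\scriptc_0)|=\tfrac12|\scriptc|$, contradicting $|\scriptc'|\le\tfrac12|\scriptc|$. Hence the second case never occurs, \eqref{convexexclusion} holds for every admissible $\scriptc'$, and one may take $c=c_0$.

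The crux, and the step I expect to be the main obstacle, is the convex-geometry inequality that for convex bodies $A\subseteq K$ in $\reals^n$ one has $|\operatorname{conv}(A\cup-A)|/|A|\ge|\operatorname{conv}(K\cup-K)|/|K|$ --- that is, this ratio is nonincreasing under inclusion. Granting it, in the second case above one gets $|\operatorname{conv}(\scriptc''\cup-\scriptc'')|\ge(|\operatorname{conv}(\scriptc_0\cup-\scriptc_0)|/|\scriptc_0|)\,|\scriptc''|>\tfrac12|\operatorname{conv}(\scriptc_0\cup-\scriptc_0)|=\tfrac12|\scriptc|$, where the strict step uses $|\scriptc''|>\tfrac12|\scriptc_0|$; it is worth noting that strictness thus comes from the volume hypothesis and not from the geometric inequality itself, so only the nonstrict form is needed. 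To prove the inequality I would fix $K$, realize an arbitrary convex $A\subseteq K$ as a decreasing intersection of half-space truncations of $K$, and reduce by continuity to the case of a single cut $A=K\cap H$; there the nested spindle-shaped bodies $\operatorname{conv}(A\cup-A)\subseteq\operatorname{conv}(K\cup-K)$ can be compared hyperplane-section by hyperplane-section, with Brunn's concavity theorem for section volumes controlling how much volume the truncation removes. The option of taking $\scriptc_0$ to be an ellipsoid is a convenience here: a linear change of variables preserves balancedness and all the volume ratios, so one may assume $\scriptc_0$ is a Euclidean ball, after which only sections of a ball and of a spherocylinder need be compared.
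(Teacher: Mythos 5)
There is a genuine gap, and it sits exactly where you predicted: the monotonicity inequality $|\operatorname{conv}(A\cup(-A))|/|A|\ge|\operatorname{conv}(K\cup(-K))|/|K|$ for convex bodies $A\subseteq K$ is false. Already for $n=1$, take $A=[-1,1]\subset K=[-1,3]$: the left ratio is $2/2=1$ while the right ratio is $6/4=3/2$ (and $A=[\tfrac12,1]\subset K=[-1,1]$ shows the reverse inequality fails too, so the ratio is simply not monotone under inclusion). Moreover the failure is not confined to your case-2 argument: the construction $\scriptc=\operatorname{conv}(\scriptc_0\cup(-\scriptc_0))$ itself need not satisfy \eqref{convexexclusion}, because the statement of Lemma~\ref{lemma:convexifyunbalanced} gives no control on where $\scriptc_0$ sits relative to the origin. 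For example, with $n=1$, $S=[-1,1]$, and $\scriptc_0=[-1,\tfrac52]$: every interval $\scriptc''\subset\scriptc_0$ with $|\scriptc''|\le\tfrac12|\scriptc_0|=\tfrac74$ misses a subset of $S$ of measure at least $\tfrac14$, so $\scriptc_0$ satisfies \eqref{convexexclusionunbalanced} whenever the constant there is at most $\tfrac18(\tfrac74)^\etta$ --- which the statement of Lemma~\ref{lemma:convexifyunbalanced} in no way rules out (and which the stopping-time proof certainly permits), so this $\scriptc_0$ is a legitimate output. Yet $\scriptc=[-\tfrac52,\tfrac52]$ contains the balanced interval $\scriptc'=[-\tfrac54,\tfrac54]$ of exactly half its measure with $S\subset\scriptc'$, so $|S\cap(\scriptc\setminus\scriptc')|=0$ and \eqref{convexexclusion} fails outright; note this is precisely your case 2 ($|\scriptc'\cap\scriptc_0|=\tfrac94>\tfrac12|\scriptc_0|$) with no contradiction arising. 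So the reduction to the unbalanced lemma by symmetrized hulls cannot be repaired by a better convex-geometry estimate; taking $\scriptc_0$ to be an interval/ellipsoid does not help, since the obstruction is its position relative to the origin, not its shape.

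The intended argument is not a reduction at all: the balanced lemma is proved exactly like the unbalanced one, by running the stopping-time construction within the class of balanced convex sets. Start with a large ball $C$ centered at the origin satisfying $|C\cap S|\ge\tfrac34|S|$ and $|C|=2^m|S|$; as long as there exists a \emph{balanced} convex $C'\subset C$ with $|C'|=\tfrac12|C|$ and $|S\cap C'|\ge(1-c_0 2^{-\etta m})|S\cap C|$, replace $C$ by $C'$ and $m$ by $m-1$. The process must terminate, since otherwise at $m=0$ one would need $\tfrac12\ge\tfrac34\prod_{k\ge0}(1-c_0 2^{-\etta k})$, impossible for $c_0$ small; and the set at which it stops has property \eqref{convexexclusion}, because any balanced convex subset of measure $\le\tfrac12|C|$ is contained in a balanced convex subset of measure exactly $\tfrac12|C|$ (for instance $(1-t)C'+tC$ for a suitable $t\in[0,1]$, which is balanced, convex, contained in $C$, and has volume varying continuously in $t$). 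Since balancedness is imposed on the competitors throughout, no symmetrization step, and hence no comparison of $|\operatorname{conv}(A\cup(-A))|/|A|$ across nested bodies, is ever needed.
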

\noindent
As above, it follows that $|\scriptc|\ge c_\etta |S|$.

\begin{proof} 
For Lemma~\ref{lemma:convexifyunbalanced}, begin with some bounded convex set $C$ satisfying
$|C\cap S|\ge\tfrac34|S|$, with
$|C|=2^m|S|$ for some nonnegative integer $m$.
Let $c_0>0$ be a sufficiently small constant, to be determined.

Consider this stopping-time process:
If there exists no convex subset $C'\subset C$ satisfying $|C'|=\tfrac12 |C|$
with  $|S\cap C'|\ge (1-c_0 2^{-\eta m})|S\cap C|$,
then stop. Otherwise replace $C$ by $C'$ and $m$ by $m-1$, and repeat.

This process must stop at some $m\ge 0$.
For if we ever reach the stage $m=0$, the process then stops unless
there exists a convex set $C'$ satisfying both $|C'|=\tfrac12 |S|$
and $|S\cap C'| \ge \prod_{k=0}^\infty (1-c_02^{-\eta k})\tfrac34 |S|$.
Thus $\tfrac12\ge \tfrac34 \prod_{k=0}^\infty (1-c_02^{-\eta k})$.
This is impossible
if $c_0$ is chosen to be a sufficiently small function of $\eta$. 
\end{proof}

\begin{comment}
-------------------------------------------------------------
Apply Lemma~\ref{lemma:convexify}
to $U$ to obtain a certain balanced
convex set $\scriptc\subset\reals^{d-1}$ 
satisfying \eqref{convexexclusion} with $S=U$.
Define $\Omega^\dagger=\{(s,u)\in\Omega: u\in U\cap\scriptc\}$.
Then $|\Omega^\dagger|\ge c_\etta 2^{-\etta m}\alpha\alpha_{\star}$.
-------------------------------------------------------------
\end{comment}

\section{Inflation bound} \label{section:inflation}

The material in this section, taken from \cite{edinburgh}, yields a 
short, direct proof of the restricted weak type inequality \eqref{restrictedweaktype}.
It does not by itself suffice to characterize quasiextremals,
but will be one essential ingredient in the analysis.
See also Schlag \cite{schlag} for a related 
discrete combinatorial approach to the inequality.

Write $\uarrow=(u_1,\cdots,u_{d-1})$ to denote a point of $(\reals^{d-1})^{d-1}$.
Form the set
\begin{equation}
\Omega^\natural
=\{(s,\uarrow)\in (\reals^{d-1})^d:
(s,u_i)\in\tilde\Omega\ \forall\, 1\le i\le d-1.
\}
\end{equation}
\begin{comment}
We regard $\tilde\Omega$ as a fiber bundle, in the measurable ``category'',
with base space $\Omega_1$ and fibers $\scriptf(s)$.
Passage to the new ``inflated'' bundle $\Omega^\natural$, in which each
fiber $\scriptf(s)$ is replaced by the Cartesian product $\scriptf(s)^{d-1}$,
is analogous to 
the passage from the cotangent bundle of a differentiable manifold to
its exterior powers.
\end{comment}
Define $\Psi:(\reals^{d-1})^d\to (\reals^d)^{d-1}$ by
\begin{equation}
\Psi(s,\uarrow) = ((u_1,s\cdot u_1),(u_2,s\cdot u_2),\cdots,(u_{d-1},s\cdot u_{d-1})).
\end{equation}
Then 
\begin{equation}
\Psi(\Omega^\natural)\subset (\Phi(\tilde\Omega))^{d-1}\subset  \tilde E^{d-1}.
\end{equation}
Both the domain and range of $\Psi$ have dimension $d(d-1)$.

$\Psi$ is injective outside a set of measure zero,
its Jacobian determinant is $|\det(\uarrow)|$,
and 
\begin{equation}
|\Psi(\Omega^\natural)| 
= \int_{s\in \Omega_1}
\int_{\uarrow\in\scriptf(s)^{d-1}}
|\det(\uarrow)| 
\,d\uarrow\,ds.
\end{equation}

\begin{lemma} \label{lemma:detbound}
Let $\scriptc\subset\reals^n$ be a bounded, balanced convex set.
Let $\mu$ be a positive, finite measure supported on $\scriptc$.
Suppose that for any balanced convex subset $\scriptc'\subset\scriptc$
satisfying $|\scriptc'|\le\delta|\scriptc|$,
$\mu(\scriptc\setminus\scriptc')\ge \lambda$.
Then
\begin{equation}
\int_{\scriptc^n} |\det(\uarrow)|\,\prod_{i=1}^n \,d\mu(u_i)
\ge c\delta^{n}\lambda^n|\scriptc|
\end{equation}
where $c>0$ depends only on $n$.
\end{lemma}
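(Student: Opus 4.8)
I want to show that if a measure $\mu$ on a balanced convex body $\scriptc\subset\reals^n$ cannot be concentrated in any balanced convex subset of volume $\le\delta|\scriptc|$ (in the sense that such a subset always misses mass $\ge\lambda$), then the $n$-fold integral of $|\det(u_1,\dots,u_n)|$ is bounded below by $c\delta^n\lambda^n|\scriptc|$. The natural approach is an iterative ``John-position'' / greedy basis selection: pick the $u_i$ one at a time so that each new vector is quantitatively far from the span of the previous ones, and exploit that $\mu$ assigns mass $\ge\lambda$ to the complement of any thin convex slab.

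\emph{Step 1: Reduce to $\scriptc$ being (comparable to) a Euclidean ball.} By John's theorem there is a linear map $L$ with $B(0,1)\subset L(\scriptc)\subset B(0,n)$, say. Both sides of the claimed inequality transform the same way under $u\mapsto L u$: the determinant picks up $|\det L|^n$ and $|\scriptc|$ picks up $|\det L|$, but so does the volume of any convex subset, so the hypothesis on $\mu$ (pushed forward) is preserved with the same $\delta,\lambda$. Hence it suffices to prove the bound when $\scriptc$ is, up to the dimensional constant $n$, the unit ball; in particular $|\scriptc|\sim_n 1$ and every slab $\{|\langle x,\nu\rangle|<h\}\cap\scriptc$ has volume $\lesssim_n h$.

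\emph{Step 2: Greedy selection of directions.} Choose $u_1$ in the support of $\mu$ outside a tiny ball around the origin — more precisely, the balanced convex set $\{|x|<c\delta\}$ (for a suitable dimensional $c$) has volume $\le\delta|\scriptc|$, so $\mu$-mass $\ge\lambda$ lives outside it; thus $|u_1|\gtrsim\delta$ on a set $A_1$ of mass $\ge\lambda$. Given $u_1,\dots,u_{k-1}$ spanning a $(k-1)$-plane $V_{k-1}$, consider the slab $\{x: \operatorname{dist}(x,V_{k-1})<h\}\cap\scriptc$; it is (essentially) a balanced convex set of volume $\lesssim_n h$, so for $h=c'\delta$ its $\mu$-mass is $\le$ whatever we need, leaving a set $A_k$ of mass $\ge\lambda$ on which $\operatorname{dist}(u_k,V_{k-1})\gtrsim\delta$. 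Here I need to be slightly careful that the slab is balanced and convex — it is, being the preimage of a ball under the orthogonal projection to $V_{k-1}^\perp$ intersected with $\scriptc$ — and that its volume is controlled, which follows from Step 1. One subtlety: ``$|S\cap\scriptc'|$ is controlled by the slab's volume'' is exactly the content of the hypothesis only for subsets of $\scriptc$; the slab intersected with $\scriptc$ is such a subset, so this is fine.

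\emph{Step 3: Assemble the determinant bound.} On $A_1\times A_2\times\cdots\times A_n$ we have, by the base-times-height expansion of the Gram volume, $|\det(u_1,\dots,u_n)| = \prod_{k=1}^n \operatorname{dist}(u_k,V_{k-1}) \gtrsim_n \delta^n$. Therefore
\[
\int_{\scriptc^n}|\det(\uarrow)|\,\prod_{i=1}^n d\mu(u_i)
\;\ge\; \int_{A_1\times\cdots\times A_n}|\det(\uarrow)|\,\prod_i d\mu(u_i)
\;\gtrsim_n\; \delta^n\prod_{k=1}^n\mu(A_k)
\;\ge\; c\,\delta^n\lambda^n,
\]
and since $|\scriptc|\sim_n 1$ after the normalization of Step 1, this is $\ge c\delta^n\lambda^n|\scriptc|$ in the original normalization, with $c$ depending only on $n$.

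\emph{The main obstacle.} The delicate point is Step 2: the sets $A_k$ must be chosen \emph{simultaneously}, i.e. the construction must produce, for a positive-$\mu^n$-measure worth of tuples, a genuine lower bound on every successive distance at once. Because each $A_k$ depends on $u_1,\dots,u_{k-1}$, one should phrase this as an induction on $k$: having fixed $u_1,\dots,u_{k-1}$ in the ``good'' set, the hypothesis applied to the slab around $V_{k-1}$ furnishes mass $\ge\lambda$ of good choices for $u_k$, and then integrate. This requires only that for \emph{every} admissible choice of the earlier vectors the slab has volume $\le\delta|\scriptc|$ — which is why the John normalization (making the slab volume bound dimension-uniform, independent of which $(k-1)$-plane $V_{k-1}$ arises) is essential rather than cosmetic. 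Everything else is a routine Fubini and the Gram determinant identity.
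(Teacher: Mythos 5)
Your argument is correct and essentially identical to the paper's proof: after an affine normalization putting $\scriptc$ in John position, factor $|\det(\uarrow)|=\prod_{k}\dist(u_k,\Span(u_1,\dots,u_{k-1}))$ and, with the earlier vectors fixed, apply the hypothesis to the balanced convex slab of width $\sim\delta$ about $\Span(u_1,\dots,u_{k-1})$, iterating over $k$ (the paper simply integrates each distance directly, getting $\ge c\delta\lambda$ per step, rather than restricting to good sets $A_k$ --- a cosmetic difference). One minor slip in Step 1: under $u\mapsto Lu$ the determinant acquires a single factor $|\det L|$, not $|\det L|^n$, and this is exactly why both sides of the inequality scale identically and your reduction is valid as claimed.
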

\noindent The power of $\delta$ here is not optimal, but the precise
dependence on $\delta$ is unimportant for us.

\begin{proof}
By applying an affine change of coordinates in $\reals^n$, we may
reduce to the case where $\scriptc$ is the unit ball; 
the factor $|\scriptc|$ in the conclusion results from the Jacobian of
this change of variables and the transformation law for $|\det(\uarrow)|$.

Write $|\det(\uarrow)| = \prod_{i=1}^{d-1} \dist(u_i,V_{i-1})$
where $V_0=\{0\}$, $V_i=\Span\{u_1,\cdots,u_i\}$,
and $\dist(v,V)$ denotes the distance from $v$ to $V$.
Fixing $(u_1,\cdots,u_{n-1})$,
define $\scriptc'$ to be the set of all $u_n$ satisfying
$\dist(u_n,\Span(u_1,\cdots,u_{n-1}))<c_n\delta$, where $c_n$ is a 
constant chosen sufficiently small to ensure
that $|\scriptc'|\le\tfrac12 |\scriptc|$.
Since $\scriptc'$ is convex and balanced, 
\begin{equation}
\int_\scriptc \dist(u_n,\Span(u_1,\cdots,u_{n-1}))\,d\mu(u_n)
\ge c\mu(\scriptc\setminus\scriptc')
\ge c\delta\lambda.
\end{equation}

Next repeat the argument: Holding $(u_1,\cdots,u_{n-2})$ fixed,
redefine $\scriptc'$ to be the set of all $u_{n-1}$
satisfying $\dist(u_{n-1},\Span(u_1,\cdots,u_{n-2}))\le c_n\delta$,
for another sufficiently constant $c_n$.
The same reasoning as above gives
\begin{equation}
\int_\scriptc \dist(u_{n-1},\Span(u_1,\cdots,u_{n-2}))\,d\mu(u_{n-1})
\ge c\delta\lambda.
\end{equation}
Repeating this reasoning $n$ times results in the desired bound.
\end{proof}

Now for each $s\in\Omega_1$, apply Lemma~\ref{lemma:convexify}
to $\scriptf(s)$ to obtain a balanced convex set $\scriptc(s)\subset\reals^{d-1}$
of measure $\sim 2^{2m(s)}\alpha_{\star}$ for some nonnegative integer $m(s)$,
so that for any convex balanced subset $\scriptc'\subset\scriptc(s)$
of measure $\le\tfrac12|\scriptc(s)|$,
$|\scriptf(s)\cap (\scriptc(s)\setminus\scriptc')| 
\ge c_\etta 2^{-\etta m(s)}\alpha_{\star}$.
Lemma~\ref{lemma:detbound} (applied with $ \mu$ equal to Lebesgue measure
restricted to $\scriptf(s)\cap\scriptc(s)$) yields the lower bound
\begin{multline}
\int_{(\scriptf(s)\cap \scriptc(s))^{d-1}}
|\det(\uarrow)|\,d\uarrow
\ge c_\etta |\scriptf(s)\cap\scriptc(s)|^{d-1} |\scriptc(s) |
\\
\ge c_\etta 2^{2m(s)}2^{-(d-1)\etta m(s)}|\scriptf(s)|^d
\sim 2^{m(s)}\alpha_{\star}^d
\end{multline}
if we define $\etta = (d-1)^{-1}$.
We thus conclude that
\begin{equation}
|\Psi(\Omega^\natural)| 
\ge c  \alpha_{\star}^d \int_{\Omega_1} 2^{m(s)}\,ds
\ge c  \alpha\alpha_{\star}^d.
\end{equation}

We have proved
\begin{lemma} \label{lemma:imagebound}
Let $E,\alpha,\alpha_{\star},\Phi$ and $\bar x,\Omega_1,\Omega$ 
satisfy the conclusions of Lemma~\ref{lemma:tower}.
Define $\Phi,\tilde\Omega$ as in \eqref{tildeOmegadefn},\eqref{Phidefn}.
Then
$|\Phi(\tilde\Omega)|\ge c\alpha_{\star}^{d/(d-1)}\alpha^{1/(d-1)}$.
\end{lemma}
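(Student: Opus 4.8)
The plan is to derive the lemma by combining two facts already assembled above: an inclusion giving an upper bound for $|\Psi(\Omega^\natural)|$ in terms of $|\Phi(\tilde\Omega)|$, and the lower bound for $|\Psi(\Omega^\natural)|$ obtained just above. Recall the inflated set $\Omega^\natural=\{(s,\uarrow):(s,u_i)\in\tilde\Omega\ \text{for all}\ i\}$, with fiber $\scriptf(s)^{d-1}$ over each $s\in\Omega_1$, and the map $\Psi(s,\uarrow)=((u_1,s\cdot u_1),\dots,(u_{d-1},s\cdot u_{d-1}))$, which carries $\Omega^\natural$ into $(\Phi(\tilde\Omega))^{d-1}$. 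Since Lebesgue measure on $(\reals^d)^{d-1}$ is the $(d-1)$-fold product of Lebesgue measure on $\reals^d$, this inclusion gives $|\Psi(\Omega^\natural)|\le|\Phi(\tilde\Omega)|^{d-1}$. Together with the already-established bound $|\Psi(\Omega^\natural)|\ge c\alpha\alpha_{\star}^d$ this yields $|\Phi(\tilde\Omega)|^{d-1}\ge c\alpha\alpha_{\star}^d$, and extracting a $(d-1)$-th root produces $|\Phi(\tilde\Omega)|\ge c(\alpha\alpha_{\star}^d)^{1/(d-1)}=c\,\alpha_{\star}^{d/(d-1)}\alpha^{1/(d-1)}$, which is the assertion; via the relation $|E|\ge 2|\Phi(\tilde\Omega)|$ noted earlier it is also the lower bound on $|E|$ recorded in \eqref{imagelowerbound}, equivalent to \eqref{restrictedweaktype}.

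For completeness I would recall how $|\Psi(\Omega^\natural)|\ge c\alpha\alpha_{\star}^d$ itself is obtained, since that is the one step with any content. As $\Psi$ is injective off a null set with Jacobian determinant $|\det(\uarrow)|$, one has $|\Psi(\Omega^\natural)|=\int_{\Omega_1}\int_{\scriptf(s)^{d-1}}|\det(\uarrow)|\,d\uarrow\,ds$. For each fixed $s\in\Omega_1$ I would apply Lemma~\ref{lemma:convexify}, with $\etta=(d-1)^{-1}$, to $\scriptf(s)$ (which has $|\scriptf(s)|=c\alpha_{\star}$), getting a balanced convex $\scriptc(s)$ with $|\scriptc(s)|\sim 2^{2m(s)}\alpha_{\star}$, $m(s)\ge 0$, and then Lemma~\ref{lemma:detbound}, with $\mu$ Lebesgue measure on $\scriptf(s)\cap\scriptc(s)$ and $\delta=\tfrac12$, to bound the inner integral below by a constant times $2^{2m(s)}2^{-(d-1)\etta m(s)}\alpha_{\star}^d\sim 2^{m(s)}\alpha_{\star}^d\ge\alpha_{\star}^d$; integrating over $\Omega_1$ and using $|\Omega_1|=c\alpha$ then gives $c\alpha\alpha_{\star}^d$.

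I do not anticipate any real obstacle in the passage from $|\Psi(\Omega^\natural)|\ge c\alpha\alpha_{\star}^d$ to the lemma: it is a one-line root extraction once the inclusion $\Psi(\Omega^\natural)\subset(\Phi(\tilde\Omega))^{d-1}$ is in hand. The substance is all upstream, in the ingredients already proved --- the inflation device, which trades the badly non-equidimensional map $\Phi\colon\reals^{2d-2}\to\reals^d$ for the equidimensional $\Psi\colon\reals^{d(d-1)}\to\reals^{d(d-1)}$ with multilinear Jacobian $|\det(\uarrow)|$; the determinant lower bound Lemma~\ref{lemma:detbound}; and the convex-approximation Lemma~\ref{lemma:convexify}. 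The one point warranting a moment's care is the exponent bookkeeping: for a general $\etta$ the inner integral is only $\gtrsim 2^{(2-(d-1)\etta)m(s)}\alpha_{\star}^d$, so one must take $\etta\le 2/(d-1)$ so that the $m(s)$-dependence is a net gain that can simply be discarded using $m(s)\ge 0$; the choice $\etta=(d-1)^{-1}$ does this and is the value for which the power losses in Lemmas~\ref{lemma:detbound} and~\ref{lemma:convexify} are calibrated. The merely a.e.\ injectivity of $\Psi$ and the $\etta$-dependence of the constants are harmless and are absorbed into the final $c$.
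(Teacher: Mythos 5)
Your proposal is correct and follows the paper's own argument essentially verbatim: the inclusion $\Psi(\Omega^\natural)\subset(\Phi(\tilde\Omega))^{d-1}$ with the $(d-1)$-fold product measure, the Jacobian formula for $|\Psi(\Omega^\natural)|$, Lemma~\ref{lemma:convexify} applied fiberwise with $\etta=(d-1)^{-1}$, and Lemma~\ref{lemma:detbound} to get $|\Psi(\Omega^\natural)|\ge c\alpha\alpha_\star^d$, followed by the root extraction. The only detail left implicit in the paper is precisely the product-inclusion step you spell out, so nothing further is needed.
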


The conclusion implies \eqref{restrictedweaktype}.
Moreover, unless $m(s)$ is small for most $s\in\Omega_1$,
we obtain an improved bound, which implies that 
if $T(E,E^\star)\ge \eps |E|^{d/(d+1)}|E^\star|^{d/(d+1)}$, then 
\begin{equation}
|\Omega_1|^{-1}\int_{\Omega_1} 2^{m(s)}\,ds
\le C\eps^{-C}.
\end{equation}
Thus roughly speaking, the typical set $\scriptf(s)$ has a subset 
of measure $\sim\alpha_{\star}$ that is contained in a convex balanced set $\scriptc(s)$
of measure $\lesssim \eps^{-C}\alpha_{\star}$.

From the point of view of our main theorem, this conclusion is defective
in two respects.
Firstly no geometric conclusion on $\Omega_1$ is obtained; 
however, we will see momentarily that this is easily remedied.
Secondly, and more significantly,
no relation between the different sets $\scriptc(s)$ is implied.
We need to show that $\cup_s \scriptc(s)$ is comparable
to a convex balanced set of measure $\lesssim \eps^{-C}$; 
and that this convex set is appropriately related to a convex 
set to which $\Omega_1$ is comparable.

\section{Merging the inflation and slicing bounds}

\begin{lemma} \label{lemma:merging}
There exists an exponent $b<\infty$ with the following property.
Let $\eps>0$ and let $(E,E^\star)$ be an $\eps$-quasiextremal pair.
Define $\alpha = \scriptt(E,E^\star)/|E|$ and $\alpha_{\star} = \scriptt(E,E^\star)/|E^\star|$.
Then there exist a point $\bar x\in E$, a measurable set $\Omega_1\subset\reals^{d-1}$,
a measurable set $\Omega\subset\Omega_1\times \reals^{d-1}$,
and a convex set $\scriptc\subset\reals^{d-1}$ having finite Lebesgue measure,
such that
\begin{gather}
\Omega_1\subset\scriptc
\\
|\Omega_1|= c\alpha
\\
|\scriptc|\le C\eps^{-b}\alpha
\\
\bar x-(s,|s|^2)\in E^\star 
\text{ for each } s\in\Omega_1
\\
|\set{t: (s,t)\in\Omega}| = c\alpha_{\star} \ \text{ for each } s\in \Omega_1
\\
\bar x-(s,|s|^2)+(t,|t|^2)\in E
\text{ for each } (s,t)\in \Omega.
\end{gather}
Moreover, there exists $\bar s\in\reals^{d-1}$ such the translated
convex set $\scriptc-\bar s$ is balanced.
\end{lemma}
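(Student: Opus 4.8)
The plan is to produce $\bar x,\Omega_1,\Omega$ from Lemma~\ref{lemma:tower}, to extract geometric information about the fibres $\scriptf(s)$ from the inflation analysis of \S\ref{section:inflation}, and then to transfer this, via the Slicing Lemma~\ref{lemma:slice}, into the required bound on the convex hull of $\Omega_1$. Throughout I use that $\Omega_1$ may be replaced by any measurable subset of proportional measure (with $\Omega$ correspondingly restricted, leaving all the fibre conclusions intact), at the cost of adjusting the constant $c$. I first record an a priori bound $|E|\le C\eps^{-C}\alpha_\star^{d/(d-1)}\alpha^{1/(d-1)}$, valid for any $\eps$-quasiextremal pair: it follows by combining the quasiextremality hypothesis with Lemma~\ref{lemma:imagebound} applied to the reversed pair $(E^\star,E)$, which gives $|E^\star|\ge c\alpha^{d/(d-1)}\alpha_\star^{1/(d-1)}$, together with the identities $\alpha|E|=\alpha_\star|E^\star|=\scriptt(E,E^\star)$.

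Next I carry out the inflation argument. For each $s\in\Omega_1$ I apply Lemma~\ref{lemma:convexify} (with $\etta=(d-1)^{-1}$) to $\scriptf(s)$, obtaining a convex set $\scriptc(s)$, balanced after translation, of measure $\sim 2^{2m(s)}\alpha_\star$ for some integer $m(s)\ge 0$, inside which $\scriptf(s)$ has a subset $\scriptf'(s)$ of measure $\gtrsim\alpha_\star$ that is spread in the sense of \eqref{convexexclusion}. Since $\Psi(\Omega^\natural)\subset(\Phi(\tilde\Omega))^{d-1}\subset\tilde E^{d-1}$, Lemma~\ref{lemma:detbound} gives $|\tilde E|^{d-1}\ge c\alpha_\star^d\int_{\Omega_1}2^{m(s)}\,ds$; combined with the a priori bound on $|E|$ this forces $\int_{\Omega_1}2^{m(s)}\,ds\le C\eps^{-C}\alpha$. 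Discarding an $s$-set of small measure I may therefore assume $2^{m(s)}\le C\eps^{-C}$, hence $|\scriptc(s)|\le C\eps^{-C}\alpha_\star$, for every $s\in\Omega_1$; a dyadic pigeonholing also lets me assume the $\scriptc(s)$ have comparable size and shape.

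Now I apply Lemma~\ref{lemma:convexify} to $\Omega_1$ itself, with $\etta=(d-1)^{-1}$, producing a convex set $\scriptc$, balanced about a point $\bar s$, with $|\scriptc|\ge c\alpha$, $|\Omega_1\cap\scriptc|\ge\tfrac12|\Omega_1|$, and $\Omega_1\cap\scriptc$ spread in $\scriptc$. Replacing $\Omega_1$ by $\Omega_1\cap\scriptc$ we obtain $\Omega_1\subset\scriptc$, and every asserted conclusion except $|\scriptc|\le C\eps^{-b}\alpha$. For the latter, use John's theorem to replace $\scriptc-\bar s$ by a comparable ellipsoid $A(B)$, with $A$ symmetric positive definite and $|\det A|\sim|\scriptc|$, so that $\Omega_1-\bar s\subset A(B)$. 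Because $\Phi(s-\bar s,u)$ is obtained from $\Phi(s,u)$ by a unimodular shear of the target, $|\Phi(\tilde\Omega-(\bar s,0))|=|\Phi(\tilde\Omega)|\le|\tilde E|=\tfrac12|E|$, so Lemma~\ref{lemma:slice} applied with $\omega=\tilde\Omega-(\bar s,0)$ yields
\begin{equation*}
\tfrac12|E|\ \ge\ c|\det A|^{-1}\int_{\Omega_1}\int_{\scriptf'(s)}|Au|\,du\,ds.
\end{equation*}
The decisive step is to bound the inner integral from below. For $\tau>0$ the sub-ellipsoid $\{u:|Au|\le\tau\}$ has measure $\sim\tau^{d-1}|\det A|^{-1}$; if this does not exceed $\tfrac12|\scriptc(s)|$, then by the spreading of $\scriptf'(s)$ in $\scriptc(s)$ and $|\scriptc(s)|\le C\eps^{-C}\alpha_\star$ a proportion $\gtrsim\eps^{C}$ of $\scriptf'(s)$ lies in $\{|Au|>\tau\}$, whence $\int_{\scriptf'(s)}|Au|\,du\gtrsim\eps^{C}\alpha_\star\,\tau$. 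The point is that the largest admissible $\tau$ is controlled by the alignment of the axes of $A$ with those of $\scriptc(s)$: the directions in which $A$ is large are, via the fibre concentration extracted from the inflation bound and the bilinear structure $\Phi(s,u)=(u,s\cdot u)$, the directions in which the dual ellipsoid — hence $\scriptc(s)$ — is forced to be small, so that the product over axes of (eigenvalue of $A$)$\times$(width of $\scriptc(s)$) stays comparable to $\big(|\det A|\,|\scriptc(s)|\big)^{1/(d-1)}$; this keeps $\tau$, and hence the inner integral, large enough that, upon inserting it into the displayed inequality and comparing with the a priori bound $|E|\le C\eps^{-C}\alpha_\star^{d/(d-1)}\alpha^{1/(d-1)}$, one concludes $|\det A|\le C\eps^{-b}\alpha$, that is, $|\scriptc|\le C\eps^{-b}\alpha$.

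The main obstacle is precisely this last step. The Slicing Lemma degrades through the factor $|\det A|^{-1}$ as the hull of $\Omega_1$ grows, the inflation bound by itself says nothing about $\Omega_1$, and a crude combination of the two merely reproduces the restricted weak type inequality \eqref{restrictedweaktype}; the improvement has to come from the compensating factor $|Au|$, which, as noted after Lemma~\ref{lemma:slice}, penalizes exactly the configurations in which $u$ escapes the dual ellipsoid. Turning that heuristic into an honest estimate requires exploiting the two-sided control on $\scriptf(s)$ from the inflation analysis together with the duality built into the form $s\cdot u$, and also disposing of several technical points — the dependence of $\scriptc(s)$ on $s$, the centering of $\scriptc(s)$ relative to $\scriptf(s)$ in the slicing estimate, and the repeated passage between convex sets and ellipsoids.
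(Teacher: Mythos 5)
Your reduction to the key inequality $|\scriptc|\le C\eps^{-b}\alpha$ is where the argument breaks down, and the step you yourself flag as ``the main obstacle'' is not a technical point to be disposed of but the entire content of the lemma, and it is not obtainable by your route. The inflation analysis of \S5 gives, for typical $s$, a balanced convex set $\scriptc(s)\supset$ (most of) $\scriptf(s)$ with $|\scriptc(s)|\lesssim\eps^{-C}\alpha_\star$, but — as the paper stresses at the end of that section — it gives \emph{no} relation between the various $\scriptc(s)$, and no geometric information whatsoever about $\Omega_1$; in particular nothing forces the ``alignment'' you invoke between the John map $A$ of the hull of $\Omega_1$ and the fibre hulls $\scriptc(s)$. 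Without that alignment, the best admissible threshold in your dichotomy is $\tau\sim(|\det A|\,|\scriptc(s)|)^{1/(d-1)}$ (the full sub-ellipsoid $\{|Au|\le\tau\}$ then already has measure $\le\tfrac12|\scriptc(s)|$), which yields $\int_{\scriptf'(s)}|Au|\,du\gtrsim\eps^{C}\alpha_\star(|\det A|\,\alpha_\star)^{1/(d-1)}$ and hence, via the Slicing Lemma, $|E|\gtrsim\eps^{C}\alpha\,\alpha_\star^{d/(d-1)}|\det A|^{-(d-2)/(d-1)}$. Comparing this with the a priori bound $|E|\lesssim\eps^{-C}\alpha^{1/(d-1)}\alpha_\star^{d/(d-1)}$ produces a \emph{lower} bound $|\det A|\gtrsim\eps^{C}\alpha$, not the required upper bound: the slicing estimate degrades as the hull of $\Omega_1$ grows, so no choice of $\tau$ justified by the information in hand can turn it into a contradiction for large $|\det A|$. (The parenthetical claim that each eigenvalue of $A$ times the corresponding width of $\scriptc(s)$ is comparable to $(|\det A||\scriptc(s)|)^{1/(d-1)}$ is exactly the duality between the $s$-hull and the $u$-hulls that the paper derives only \emph{after} the merging lemma, from the dichotomy \eqref{eq:dichotomy1}--\eqref{eq:dichotomy2}, and that derivation uses $|\det A|\lesssim\eps^{-b}\alpha$ as input; assuming it here is circular. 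The secondary claim that a dyadic pigeonholing lets you take the $\scriptc(s)$ of ``comparable shape'' is also unjustified.)

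The paper's proof sidesteps all of this with a different device: instead of taking $\Omega_1$ from Lemma~\ref{lemma:tower} based at a point of $E$, it builds a three-level tower starting from a point $\bar x_\star\in E^\star$, producing $\omega_1\subset\reals^{d-1}$ of measure $c\alpha_\star$, fibres $\{s:(r,s)\in\omega_2\}$ of measure $c\alpha$ landing in $E^\star$, and second fibres of measure $c\alpha_\star$ landing in $E$. The set playing the role of $\Omega_1$ is then itself a \emph{fibre} $\{s:(\bar r,s)\in\omega_2\}$ for a well-chosen $\bar r\in\omega_1$, and the conclusion of \S5 applied with the roles of $E$ and $E^\star$ interchanged controls precisely such fibres: it yields a balanced convex $\scriptc$ with $|\scriptc|\lesssim\eps^{-C}\alpha$ containing a portion of measure $\ge c\alpha$ of that fibre. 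Setting $\bar x=\bar x_\star+(\bar r,|\bar r|^2)$ and taking $\Omega$ from the third level gives all the stated properties; no slicing, no convexification of $\Omega_1$ itself, and no relation among the fibre hulls is needed at this stage. If you want to salvage your plan, the missing ingredient is exactly this reversal: apply the inflation bound in the $s$-variables (toward $E^\star$) rather than only in the $u$-variables (toward $E$).
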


\begin{proof}
By the same reasoning already used above,
there exist $\bar x_\star\in E^\star$
and sets $\omega_1\subset\reals^d$,
$\omega_2\subset\omega_1\times\reals^d$,
$\omega_3\subset\omega_2\times\reals^d$
with the following properties:
\begin{gather}
\label{mergingprooffirst}
\bar x_\star+(r,|r|^2)\in E
\ \forall\,r\in\omega_1
\\
\bar x_\star+(r,|r|^2)-(s,|s|^2)\in E^\star
\ \forall\,(r,s)\in\omega_2
\\
\bar x_\star+(r,|r|^2)-(s,|s|^2)+ (t,|t|^2)\in E
\ \forall\,(r,s,t)\in\omega_3
\\
|\omega_1| = c\alpha_{\star}
\\
|\set{s: (r,s)\in\omega_2}| = c\alpha\ \text{ for each } r\in \omega_1
\\
|\set{t: (r,s,t)\in\omega_3}| = c\alpha_{\star}\ 
\text{ for each } (r,s)\in \omega_2.
\label{mergingprooflast}
\end{gather}
Suppose that the pair $(E,E^\star)$ is $\eps$-quasiextremal.
By considering $\omega_2$ and invoking the conclusion of \S\ref{section:inflation}
we conclude that there exist $\bar r\in\omega_1$
and a convex balanced set $\scriptc$ centered at $\bar r $ 
such that  $|\scriptc|\lesssim \eps^{-C}\alpha$
and
$|\scriptc\cap \{s: (\bar r,s)\in\omega_2\}|\ge c\alpha$.
Now set $\bar x = \bar x_\star + (\bar r,|\bar r|^2)$,
$\Omega_1=\{s: (\bar r,s)\in\omega_2\}$,
and
$\Omega=\{t: (\bar r,s,t)\in\omega_3\}$.
\end{proof}

We now prove the main result, Theorem~\ref{maintheorem}. 
Let $(E,E^\star)$ be an $\eps$-quasiextremal pair.
Let $\scriptc\subset\reals^{d-1}$ be a convex set 
satisfying the conclusions of Lemma~\ref{lemma:merging}.
There exists an ellipsoid which contains $\scriptc$ and has measure
comparable to that of $\scriptc$, up to a factor which depends only
on the dimension $d$. This ellipsoid equals $A(B)$
for a certain invertible symmetric linear transformation $A$
of $\reals^{d-1}$, where $B$ is the unit ball.
Thus  $|\det A|\sim |\scriptc|$.

By Lemma~\ref{lemma:slice},
\begin{align*}
|E| 
&\ge c |\det A|^{-1} \int_{\tilde\Omega} |A(u)|\,du\,ds
\\
&= c |\det A|^{-1} \int_{s\in \Omega_1}\int_{\scriptf(s)} |A(u)|\,du\,ds
\\
&= c |\det A|^{-2} \int_{\Omega_1}
\int_{\tilde\scriptf(s)} |w|\,dw\,ds
\end{align*}
where $w = A(u)$ ranges over the set 
$\tilde\scriptf(s)=A\scriptf(s)\subset\reals^{d-1}$,
and 
\[|\tilde\scriptf(s)|\sim |\det A|\alpha_{\star} \sim |\scriptc|\alpha_{\star}.\]
By passing to a subset of $\Omega$, we can assume that 
all sets $|\scriptf(s)|$ have the same measures, hence
that 
$|\tilde\scriptf(s)|= c|\scriptc|\alpha_\star$
for all $s\in\Omega_1$, for a certain small constant $c>0$.

Clearly $\int_S |w|\,dw\gtrsim |S|^{d/(d-1)}$
for any Lebesgue measurable set $S\subset\reals^{d-1}$.
Therefore
\begin{equation} \label{slicinginfo}
\int_{\tilde\scriptf(s)} |w|\,dw
\ge c |\tilde\scriptf(s)|^{d/(d-1)}
\sim |\det A|^{d/(d-1)} \alpha_{\star}^{d/(d-1)}.
\end{equation}
An equally evident strengthened version of this bound will be the key
to constraining the structure of $\Omega$:
For any $\rho\ge |\tilde\scriptf(s)|^{1/(d-1)}$, 
either
\begin{equation} \label{eq:dichotomy1}
\int_{\tilde\scriptf(s)} |w|\,dw
\ge c
\frac{\rho} { |\tilde\scriptf(s)|^{1/(d-1)} }
|\det A|^{d/(d-1)} \alpha_{\star}^{d/(d-1)},
\end{equation}
or 
\begin{equation} \label{eq:dichotomy2}
| \tilde\scriptf(s)\cap B(0,\rho) | 
\ge c'\alpha_{\star}|\det A|
\end{equation}
for a certain constant $c'>0$ independent of $\rho$, where
$B(0,\rho)\subset\reals^{d-1}$ denotes the ball of radius $\rho$
centered at the origin.

From the cruder conclusion \eqref{slicinginfo} we deduce already that
\begin{multline} \label{Elowerbound}
|E| \ge c|\det A|^{-2}|\det A|^{d/(d-1)}\alpha\alpha_{\star}^{d/(d-1)}
\\
\sim  |\scriptc|^{-(d-2)/(d-1)}\alpha\alpha_{\star}^{d/(d-1)}
\ge c \eps^{b(d-2)/(d-1)}\alpha^{1/(d-1)}\alpha_{\star}^{d/(d-1)}.
\end{multline}
From this and the definitions of $\alpha,\alpha_{\star}$ it follows by
a bit of algebra that
\[
\scriptt(E,E^\star)\le C\eps^{-C}|E|^{d/(d+1)}|E^\star|^{d/(d+1)}.
\]
But this, together with the $\eps$-quasiextremality hypothesis 
that
$\scriptt(E,E^\star)$ is $\ge \eps|E|^{d/(d+1)}|E^\star|^{d/(d+1)}$,
forces an upper bound on $\eps$,
independent of $E,E^\star$. Thus we once again recover the restricted weak
type endpoint inequality
$\scriptt(E,E^\star)\le C|E|^{d/(d+1)}|E^\star|^{d/(d+1)}$.

To squeeze out new information, 
apply the dichotomy \eqref{eq:dichotomy1},\eqref{eq:dichotomy2} with
\begin{equation}
\rho = \lambda\eps^{-a}|\tilde\scriptf(s)|^{1/(d-1)}, 
\end{equation}
where $a>0$ and $\lambda\gg 1$ are constants to be specified below.
Then either
\begin{enumerate}
\item
There exists a subset $\Omega_1^\dagger\subset\Omega_1$ of measure $\ge c\alpha$
such that for each $s\in\Omega_1^\dagger$,
$|\tilde\scriptf(s)\cap B(0,\lambda\eps^{-a})|\ge c\alpha_{\star}|\det A|$,
or
\item
There exists a subset $\Omega_1^\ddagger$ of measure $\ge c\alpha$
such that for each $s\in\Omega_1^\dagger$,
%$|\scriptf(s)\cap A(\reals^{d-1}\setminus B(0,\lambda\eps^{-a})|\ge c\alpha_{\star}$.
$\int_{\tilde\scriptf(s)} |w|\,dw \ge c\lambda\eps^{-a}|\det A|^{d/(d-1)}\alpha_{\star}^{d/(d-1)}$.
\end{enumerate}
In case (2),
by integrating over $\Omega_1^\ddagger$ we conclude that
\begin{equation}
|E| \ge 
c \lambda\eps^{-a+b(d-2)/(d-1)}\alpha^{1/(d-1)}\alpha_{\star}^{d/(d-1)}
\end{equation}
and thence, by choosing $a> b(d-2)/(d-1)$, that
\begin{equation}
\scriptt(E,E^\star)\le C\lambda^{-a'}\eps^{\gamma}|E|^{d/(d+1)}|E^\star|^{d/(d+1)}
\end{equation}
for some exponents $a',\gamma>0$.
The exponent $a$ can be chosen so that $\gamma=1$.
Here $C$ is independent of $\lambda,\eps,a'$.
Choose $\lambda$ sufficiently large that this contradicts 
the quasiextremality hypothesis
$\scriptt(E,E^\star)\ge \eps|E|^{d/(d+1)}|E^\star|^{d/(d+1)}$.
Therefore case (2) cannot arise; case (1) must hold.
Henceforth $\lambda,a$ and hence $\rho$ remain fixed.

In case (1), $\tilde E$ contains
$\Phi(\{(s,u)\in\Omega: s\in\Omega_1^\dagger
\text{ and } u\in A^{-1}(B(0,\rho)\})$.
The same reasoning that established \eqref{Elowerbound} 
proves that
this subset of $\tilde E$ has measure
$\ge c\eps^C \alpha^{1/(d-1)}\alpha_{\star}^{d/(d-1)}$.
Reversing the change of variables that transformed $E$ to $\tilde E$,
and unraveling notation,
we conclude that 
\[|E\cap \pi\scriptb(\bar z, \earrow,r,r^{\star})|\ge c\eps^C|E|\]
where
$\bar z =(\bar x,\bar y)$
with $\bar y = \bar x - (\bar s,|\bar s|^2)$,
$\bar s\in\reals^{d-1}$ is a point such that the convex set $\scriptc-\bar s$
is balanced,
and the elements $e_j$ of the orthonormal basis $\earrow$
and components $r_j$ of $r$
are eigenvectors and eigenvalues of $A$.

The sets $E,E^\star$ play symmetric roles, so it follows in
exactly the same way that $E^\star$ is related to $\pi^\star(\scriptb')$,
for some other ``ball'' $\scriptb'$, in the same way that
$E$ is related to $\pi(\scriptb)$.
It remains to show that $\scriptb,\scriptb'$
can be taken to be equal, after possibly enlarging 
the parameters $\rho,r_j,r^\star_j$ in their definitions
by a factor $C\eps^{-C}$.
This follows from information already brought out.

Indeed, it has been shown that there exist $\bar x_\star$
and sets $\omega_1,\omega_2,\omega_3$ as in the
proof of Lemma~\ref{lemma:merging},
together with convex balanced sets $\scriptc_1,\scriptc_2,\scriptc_3
\subset\reals^{d-1}$ and
a parameter $\bar r\in\reals^{d-1}$
such that $\omega_1\subset \bar r+\scriptc_1$,
and whenever $(r,s,t)\in\omega_3$,
$s-r\in\scriptc_2$ and $t-s\in\scriptc_3$. 
Both $|\scriptc_1|$
and $|\scriptc_3|$ 
are
$\sim C\eps^{-C}\alpha_\star$,
while
$|\scriptc_2|\sim C\eps^{-C}\alpha$.
$\scriptc_2$ is determined by $\scriptc_1$
in the following way:
There exist an orthonormal basis
$\set{e_j: 1\le j\le d-1}$ for $\reals^{d-1}$
and positive real numbers $r_j$ such
that $\scriptc_1$ is comparable to 
$\set{y'\in\reals^{d-1}: |\langle y',e_j\rangle|<r_j \text{ for all } j}$
and $\prod_{j=1}^{d-1}r_j = C\eps^{-C}\alpha_\star$;
we can redefine $\scriptc_1$ to be this set. 
Then $\scriptc_2$ can be taken to be 
$\{y'\in\reals^{d-1}: |\langle y',e_j\rangle|<r_j^\star \text{ for all } j\}$,
where $r_jr_j^\star=\rho$ and $\rho$ is determined 
from $\{r_j\}$ by the requirement that $\prod_j r_j^\star =
C\eps^{-C}\alpha$.
The above analysis shows that $\scriptc_2$ is determined by $\scriptc_1$
in this sense.

Now since $E,E^\star$ play symmetric roles, the same analysis
shows that $\scriptc_3$ is determined by $\scriptc_2$ in the same
way. This forces $\scriptc_3=\scriptc_1$, up to the replacement
of $r_j$ by $C\eps^{-C}r_j$ for each $j$. 
Thus we may take $\scriptc_3$ to equal $\scriptc_1$.

We know that 
\[\bar x_\star+(r,|r|^2)-(s,|s|^2)\in E^\star
\text{ for all $(r,s)\in\omega_2$,}\]
and that 
\[\phi(r,s,t)=\bar x_\star+(r,|r|^2)-(s,|s|^2)+(t,|t|^2)\in E
\text{ for all $(r,s,t)\in\omega_3$.}\]
This produces subsets of $\tilde E\subset E$ and $\tilde E^\star\subset E^\star$
satisfying the desired lower bound 
$\scriptt(\tilde E,\tilde E^\star)\ge c\eps^C \scriptt(E,E^\star)$.
Moreover $\tilde E^\star \subset\pi^\star(\scriptb)$.
Thus all that remains to be shown is that
$\phi(\omega_3)\subset\pi(\scriptb)$ for the same ball $\scriptb$.

By definition of $\scriptb$,
this amounts to showing that
\begin{equation}
\label{compatibility1}
\big|
\phi(r,s,t)_d-[(\bar x_\star)_d + |\phi(r,s,t)'-(\bar x_\star')|^2]
\big|
<C\eps^{-C}\rho
\end{equation}
for all $(r,s,t)\in\omega_3$,
where we have written 
\[\phi(r,s,t) = (\phi(r,s,t)',\phi(r,s,t)_d)
\in\reals^{d-1}\times\reals^1.\]
Substituting the definition
\[\phi(r,s,t) = \bar x_\star
+ (r,|r|^2)-(s,|s|^2)+(t,|t|^2),\]
\eqref{compatibility1} becomes
\begin{equation} \label{compatibility2}
\big|
|r|^2-|s|^2+|t|^2-|r-s+t|^2
\big|
<C\eps^{-C}\rho.
\end{equation}
Since $(t-s)\in\scriptc_1$, $(s-r)\in\scriptc_2$, and
\[ |r|^2-|s|^2+|t|^2-|r-s+t|^2
= 2(t-s)\cdot(s-r),\]
this follows directly from
the duality relationship between $\scriptc_1$ and $\scriptc_2$.

Thus we have shown that
there exists a pair $(B,B^\star)=(\pi(\scriptb),\pi^\star(\scriptb))$
satisfying
\begin{gather} 
\scriptt(E\cap B,E^{\star}\cap B^{\star}) \ge C^{-1}\scriptt(E,E^{\star})
\label{alternativeconclusion1}
\\
\intertext{and}
|B|\le C\eps^{-A}|E| \text{ and } |B^{\star}|\le C\eps^{-A}|E^{\star}|.
\label{alternativeconclusion2}
\end{gather}
This is essentially stronger than the conclusion stated in Theorem~\ref{maintheorem},
as will be shown below using the next lemma.

\begin{lemma} \label{lemma:covering}
There exist $C,A<\infty$ such that for any $\delta\in(0,1]$
and any set $\scriptb =\scriptb(\bar z,\earrow,r,r^{\star})\subset\scripti$ 
of the type described in Definition~\ref{balldefinition},
there exists a family of subsets $\{\scriptb_j: j\in J\}$ of $\scripti$,
each of which is likewise a set of the type described in Definition~\ref{balldefinition},
satisfying
\begin{gather*}
\scriptb=\cup_{j\in J}\scriptb_j,
\\
|J|\le C\delta^{-A},
\\
%\text{no point of $\reals^d$ is contained in $\pi(\scriptb_j)$
%for more than $C\delta^{-A}$ indices $j\in J$,}
%\\
%\text{no point of $\reals^d$ is contained in $\pi^\star(\scriptb_j)$
%for more than $C\delta^{-A}$ indices $j\in J$,}
%\\
|\pi(\scriptb_j)|=\delta|\pi(\scriptb)| \text{ for all $j$,}
\\
|\pi^\star(\scriptb_j)|=\delta|\pi^\star(\scriptb)| \text{ for all $j$.}
\end{gather*}
\end{lemma}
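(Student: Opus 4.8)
The plan is to reduce the lemma to two elementary facts about the sets of Definition~\ref{balldefinition}: an exact formula for the Lebesgue measures of their two projections, and the observation that any such set is covered by boundedly many parabolically rescaled, translated copies of itself.

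First I would record that for every $\scriptb=\scriptb(\bar z,\earrow,r,r^\star)$, with $\rho$ the common value $r_jr^\star_j$,
\[
|\pi(\scriptb)| = 2^d\rho\prod_{j=1}^{d-1}r_j
\qquad\text{and}\qquad
|\pi^\star(\scriptb)| = 2^d\rho\prod_{j=1}^{d-1}r^\star_j .
\]
Indeed $\pi(\scriptb)$ is \emph{exactly} the set of $x=(x',x_d)$ satisfying \eqref{firstfirstcoords} and \eqref{firstlastcoord}: for any such $x$, the point $x_\star$ with $x_\star'=\bar x_\star'$ and $(x_\star)_d=x_d-|x'-\bar x_\star'|^2$ lies on $\scripti$, obeys \eqref{secondfirstcoords} trivially, and obeys \eqref{secondlastcoord} because --- after substituting $(\bar x_\star)_d=\bar x_d-|\bar x'-\bar x_\star'|^2$, which holds since $\bar z\in\scripti$ --- the inequality \eqref{secondlastcoord} for this $x_\star$ is literally identical with \eqref{firstlastcoord}. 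As \eqref{firstfirstcoords} confines $x'$ to a box of volume $2^{d-1}\prod_j r_j$ in the orthonormal frame $\earrow$, and \eqref{firstlastcoord} then confines $x_d$ to an interval of length $2\rho$, Fubini gives the first identity; the second is symmetric. In particular $|\pi(\scriptb)|$ and $|\pi^\star(\scriptb)|$ depend on the parameters of $\scriptb$ only through $\rho$ and the products $\prod_j r_j$, $\prod_j r^\star_j$.

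Now put $\mu=\delta^{1/(d+1)}\in(0,1]$. Every $\scriptb_j$ is to be of the form $\scriptb(z^{(j)},\earrow,\mu r,\mu r^\star)$ for a center $z^{(j)}\in\scripti$; since $(\mu r_j)(\mu r^\star_j)=\mu^2\rho$ is independent of $j$, each such set is again one of the type in Definition~\ref{balldefinition}, and the displayed formulas force, \emph{whatever} the center,
\[
|\pi(\scriptb_j)| = 2^d(\mu^2\rho)\prod_j(\mu r_j) = \mu^{d+1}|\pi(\scriptb)| = \delta|\pi(\scriptb)| ,
\]
and likewise $|\pi^\star(\scriptb_j)|=\delta|\pi^\star(\scriptb)|$. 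So the whole task is to place the centers so that $\scriptb\subseteq\bigcup_{j\in J}\scriptb_j$ with $|J|\le C\delta^{-2d/(d+1)}$, which gives the lemma with $A=\tfrac{2d}{d+1}<2$. I would use a product grid: about $\mu^{-(d-1)}$ choices for the $x'$-part of $z^{(j)}$, spaced $\sim\mu r_i$ apart in direction $e_i$ across the box of \eqref{firstfirstcoords}; about $\mu^{-(d-1)}$ choices for the $x_\star'$-part, spaced $\sim\mu r^\star_i$ apart across the box of \eqref{secondfirstcoords}; and, for each resulting pair, about $\mu^{-2}$ choices for the $x_d$-part, spaced $\sim\mu^2\rho$ apart across an interval of length $\sim\rho$ --- the one on which \eqref{firstlastcoord} cuts $\scriptb$ out over that $x'$-location --- the final coordinate of $z^{(j)}$ being then fixed by $z^{(j)}\in\scripti$. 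The product of these counts is $\sim\mu^{-2d}$.

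Finally I would verify the covering pointwise. Given $(x,x_\star)\in\scriptb$, take the grid values nearest $x'$ and $x_\star'$, so that \eqref{firstfirstcoords} and \eqref{secondfirstcoords} hold for $\scriptb_j$. A short expansion --- in which the a priori large terms built from $\bar x'-\bar x_\star'$ cancel --- shows that over one grid cell the parabolic slab cut out by \eqref{firstlastcoord} for $\scriptb_j$ is a vertical translate of the corresponding slab of $\scriptb$ by the $x_d$-grid parameter, up to an error $\lesssim\mu\rho$; since that parameter runs over $\sim\mu^{-2}$ equally spaced values filling an interval of length $\sim\rho$ with spacing $\sim\mu^2\rho$, and $\mu\rho\ll\rho$, these slabs stack up to cover the portion of $\scriptb$ lying over the cell. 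The margin left in \eqref{firstlastcoord} for $\scriptb_j$ then yields \eqref{secondlastcoord} for $\scriptb_j$, because under \eqref{firstfirstcoords} and \eqref{secondfirstcoords} at scale $\mu r,\mu r^\star$ the two inequalities \eqref{firstlastcoord}, \eqref{secondlastcoord} for $\scriptb_j$ differ by at most $2\sum_i(\mu r_i)(\mu r^\star_i)\lesssim\mu^2\rho$ --- an error absorbed by shrinking the cells and the $x_d$-spacing by fixed dimensional factors, costing only a dimensional constant in $|J|$. This produces $\scriptb\subseteq\bigcup_j\scriptb_j$, which is the covering property actually needed in deducing \eqref{Abound}--\eqref{Bbound} from \eqref{eq:strongerconclusionA}--\eqref{eq:strongerconclusionB}: there one only uses that $\scriptt(E\cap\pi(\scriptb),\,E^\star\cap\pi^\star(\scriptb))\le\sum_j\scriptt(E\cap\pi(\scriptb_j),\,E^\star\cap\pi^\star(\scriptb_j))$, which follows from $\scriptb\subseteq\bigcup_j\scriptb_j$ together with $\scripti\cap(\pi(\scriptb)\times\pi^\star(\scriptb))\subseteq\scriptb$, followed by pigeonholing. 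The one place where genuine care is needed --- the main obstacle --- is precisely this last verification: because the curved cross-section of each small ball $\scriptb_j$ is \emph{sheared} relative to that of $\scriptb$, one must check that $\sim\mu^{-2}$ slabs suffice per $(x',x_\star')$-cell rather than a number growing with $\rho$ or with the location of $\bar z$, and this rests on the cancellation which makes the shear over a single cell a mere linear perturbation of size $\lesssim\mu\rho$.
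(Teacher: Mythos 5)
Your proof is correct and follows essentially the same route as the paper's: cover $\scriptb$ by balls of the same type whose tangential radii are scaled by $\delta^{1/(d+1)}$ and whose $\rho$ is scaled by $\delta^{2/(d+1)}$, centered at a suitable net, so that each projection has measure exactly $\delta$ times the original. The only difference is presentational: the paper first normalizes $\bar z=0$, $r=r^\star=\rho=1$ by the symmetries and then asserts the covering, whereas you work in the original coordinates with an anisotropic product grid and carry out explicitly (via the exact projection formula and the $2\langle\Delta_x,\Delta_y\rangle$ cancellation) the verification the paper leaves implicit.
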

\noindent Here $J$ denotes the cardinality of the index set $J$.

\begin{proof}
Symmetries of $\scripti$ 
(cf.\ \eqref{manifestsymmetries})
permit a reduction to the case
where $\bar z=(0,0)$, $\earrow$ is the standard basis for
$\reals^d$, and $\rho=r_i=r_j^\star=1$ for all $i,j$. 
Then
$|\pi(\scriptb)|=|\pi^\star(\scriptb)|$.

Let $\eta=c\delta^{1/(d+1)}$ and $\eta'=c'\delta^{2/(d+1)}$
for constants $c,c'$ to be chosen below.
Let $\{z_j: j\in J\}$ be a finite subset of $\scriptb$
such that $|z_i-z_j|\gtrsim\eta'$ for all $i\ne j$,
and such that for any $z\in\scriptb$
there exists $j$ such that $|z-z_j|\le \eta'$. 
Then $|J|\le C\delta^{-A}$ for some finite constants $C,A$.

Define $\scriptb_j = \scriptb(z_j,\earrow,r,r^\star)$
where $r_k=r^\star_l=\eta$ (and consequently $\rho=\eta^2$)
for all indices $1\le k,l\le d-1$.
Then 
$|\pi(\scriptb_j)|=|\pi^\star(\scriptb_j)|
= C\eta^{d+1} = Cc^{d+1}\delta$
for a certain constant $C$;
in particular, these are independent of $j$. 
There is a unique $c$, independent of $\delta$, such that 
$|\pi(\scriptb_j)|=\delta|\pi(\scriptb)|$
and
$|\pi^\star(\scriptb_j)|=\delta|\pi^\star(\scriptb)|$
for all $j$.
If $c'$ is chosen to be sufficiently small,
then $\cup_j\scriptb_j$ clearly covers $\scriptb$;
the exponent $2/(d+1)$ in the definition of $\eta'$
is essential here because $\rho$ is proportional to $\eta^2$.
\end{proof}

To complete the proof of Theorem~\ref{maintheorem}, 
let $\scriptb$ be as in 
\eqref{alternativeconclusion1},\eqref{alternativeconclusion2}.
Apply the lemma with $\delta = \eps^\Gamma$ for a sufficiently
large exponent $\Gamma$, to obtain 
sets $\scriptb_j$ such that $B_j=\pi(\scriptb_j)$
and
$B_j^\star = \pi^\star(\scriptb_j)$
satisfy $|B_j|\le |E|$ and $|B_j^\star|\le |E^\star|$ for all $j$.
$\Gamma$ can be taken to depend only on the exponent $A$
in \eqref{alternativeconclusion1},\eqref{alternativeconclusion2}.
Since
\begin{align*}
\scriptt(E\cap B,E^\star\cap B^\star)
&= c|\scripti\cap (E\cap B\times E^\star\cap B^\star)|
\\
&\le c\sum_{j\in J} 
|\scripti\cap (E\cap B_j\times E^\star\cap B^\star_j)|
\\
&= 
\sum_{j\in J}
\scriptt(E\cap B_j,E^\star\cap B_j^\star)
\end{align*}
and $|J|\le C\eps^{-C}$,
there must exist an index $j$ for which
\begin{equation*}
\scriptt(E\cap B_j,E^\star\cap B_j^\star)
\ge c\eps^C 
\scriptt(E\cap B,E^\star\cap B^\star)
\ge c\eps^C\scriptt(E,E^\star),
\end{equation*}
as was to be proved.
Here $C$ is determined by $\Gamma$, hence by $A$; it does not
depend on $E,E^\star$.
\qed

\begin{proof}[Proof of Theorem~\ref{thm:reformulation}]
Let $E,E^\star$ be arbitrary measurable sets of strictly
positive Lebesgue measures. If $\scriptt(E,E^\star)=0$ then
there is nothing to prove. Otherwise define 
$\eps>0$ by
\begin{equation} \label{eq:epsilondefn}
\scriptt(E,E^\star) = \eps |E|^{d/(d+1)}|E^\star|^{d/(d+1)}.
\end{equation}
According to Theorem~\ref{maintheorem},
there exists a pair $(B,B^\star)=(\pi(\scriptb),\pi^\star(\scriptb))$
such that  $|B|\le|E|$, $|B^\star|\le |E^\star|$,
and 
\[
\scriptt(E\cap B, E^\star\cap B^\star)\gtrsim
\eps^A
\scriptt(E,E^\star). 
\]
Since 
\[ \scriptt(E\cap B, E^\star\cap B^\star)
\le C|E\cap B|^{d/(d+1)}|E^\star\cap B^\star|^{d/(d+1)},
\]
it follows by a bit of algebra that
\[
\frac{|E\cap B|}{|E|}\cdot \frac{|E^\star\cap B^\star|}{|E^\star|}
\gtrsim \eps^{(A+1)(d+1)/d}.
\]
Substituting this upper bound for $\eps$ into
\eqref{eq:epsilondefn} yields 
\[
\scriptt(E,E^\star)\lesssim
|E|^{d/(d+1)}|E^\star|^{d/(d+1)}
\Big(
\frac{|E\cap B|}{|E|}\cdot \frac{|E^\star\cap B^\star|}{|E^\star|}\Big)^\delta
\]
for a certain $\delta>0$.
\end{proof}

% marker 8/21 

\section{A trilinear variant}
A restricted weak type inequality cannot be extrapolated
to a strong type inequality without additional information.
Our basic bilinear inequality for $\scriptt(E,F)$ admits
the following trilinear variant, which will be the key to 
the extrapolation.

\begin{lemma} \label{lemma:trulymultilinear}
Let $E,E',G\subset\reals^d$
be Lebesgue measurable sets with finite measures.
Suppose that $T(\chi_{E'})(x)\ge\beta'$ for all $x\in G$.
Then
\begin{equation} \label{productbound1}
\big(\scriptt(E,G)|E|^{-1}\big)^{1/(d-1)}
{\beta'}^{d/(d-1)}\le C|E'|.
\end{equation}
\end{lemma}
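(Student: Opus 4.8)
The plan is to re-run the ``inflation'' argument of \S\ref{section:inflation}, starting this time from the triple $(E,G,E')$ rather than from a pair of sets. Since $\scriptt(E,G)=\int_E T(\chi_G)(x)\,dx$, Lemma~\ref{lemma:tower} applied to the pair $(E,G)$ (I keep only its conclusions concerning $\bar x$ and $\Omega_1$) furnishes a point $\bar x$ and a measurable set $\Omega_1\subset\reals^{d-1}$ with $|\Omega_1|= c\,\scriptt(E,G)/|E|=:c\alpha$ and $\bar x-(s,|s|^2)\in G$ for every $s\in\Omega_1$. For each such $s$ the point $\bar x-(s,|s|^2)$ lies in $G$, so the hypothesis that $T(\chi_{E'})\ge\beta'$ on $G$ provides a set
\[
\scriptg(s)=\{t\in\reals^{d-1}:\ \bar x-(s,|s|^2)-(t,|t|^2)\in E'\}
\qquad\text{with}\qquad
|\scriptg(s)|\ge\beta';
\]
discarding part of each $\scriptg(s)$, I may assume $|\scriptg(s)|=\beta'$ for all $s\in\Omega_1$.

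Next I would normalize the relevant map. Translation by $-\bar x$, followed by $H(v,r)=(v,-\tfrac12(r+|v|^2))$ and then by $z\mapsto-z$ --- a composition with constant nonzero Jacobian, hence carrying $E'$ onto a set $\hat E'$ with $|\hat E'|=\tfrac12|E'|$ --- transforms $(s,t)\mapsto\bar x-(s,|s|^2)-(t,|t|^2)$ into $\Phi''(s,t):=(s+t,\ s\cdot t)$. Thus $\Phi''(s,t)\in\hat E'$ whenever $s\in\Omega_1$ and $t\in\scriptg(s)$. Now inflate as in \S\ref{section:inflation}: setting
\[
\Psi''(s;t_1,\dots,t_{d-1})=\bigl(\Phi''(s,t_1),\dots,\Phi''(s,t_{d-1})\bigr),
\qquad
\Omega^\natural=\{(s;t_1,\dots,t_{d-1}):\ s\in\Omega_1,\ t_i\in\scriptg(s)\ \forall i\},
\]
one has $\Psi''(\Omega^\natural)\subset(\hat E')^{d-1}$, with $\Psi''$ mapping between spaces of equal dimension $d(d-1)$.

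The only genuinely new point is the Jacobian determinant of $\Psi''$. Because the variable $s$ is shared among all $d-1$ components, the Jacobian matrix is not the one from \S\ref{section:inflation}; in $(d-1)\times(d-1)$-block form the rows corresponding to the component $s+t_i$ equal $[\,I_{d-1}\mid 0\mid\cdots\mid I_{d-1}\mid\cdots\,]$ (identity in the $s$-block and in the $t_i$-block), while the row corresponding to $s\cdot t_i$ equals $[\,t_i^{\mathsf T}\mid 0\mid\cdots\mid s^{\mathsf T}\mid\cdots\,]$. Subtracting $s^{\mathsf T}$ times the $(s+t_i)$-rows from the $(s\cdot t_i)$-row, for each $i$, and then reordering columns, leaves a block-triangular matrix of determinant $\pm\det(t_1-s,\dots,t_{d-1}-s)$. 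Since $\Psi''$ is polynomial with Jacobian determinant vanishing only on a proper algebraic subset, the reasoning used for $\Psi$ in \S\ref{section:inflation} shows it is at most $C_d$-to-one off a null set, whence $|\hat E'|^{d-1}\ge c\int_{\Omega^\natural}|\det(t_1-s,\dots,t_{d-1}-s)|$. Substituting $u_i=t_i-s$ in each fibre (a translation of measure $1$) rewrites the right side as $\int_{\Omega_1}\int_{(\scriptg(s)-s)^{d-1}}|\det(\uarrow)|\,d\uarrow\,ds$ --- exactly the integral estimated in \S\ref{section:inflation}, with $\alpha_\star$ there replaced by $\beta'$.

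The rest is a verbatim appeal to \S\ref{section:inflation}: applying Lemma~\ref{lemma:convexify} to each $\scriptg(s)-s$, a set of measure $\beta'$, with $\etta=(d-1)^{-1}$, and then Lemma~\ref{lemma:detbound}, gives $\int_{(\scriptg(s)-s)^{d-1}}|\det(\uarrow)|\,d\uarrow\ge c\,\beta'^{\,d}$; integrating over $\Omega_1$ yields $|\hat E'|^{d-1}\ge c\,|\Omega_1|\,\beta'^{\,d}\ge c\,\alpha\,\beta'^{\,d}$. Since $|\hat E'|=\tfrac12|E'|$ and $\alpha=\scriptt(E,G)/|E|$, this rearranges to $\bigl(\scriptt(E,G)\,|E|^{-1}\bigr)^{1/(d-1)}\,\beta'^{\,d/(d-1)}\le C|E'|$, which is \eqref{productbound1}. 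I expect the Jacobian identity for $\Psi''$ to be the only real obstacle; the remaining ingredients are pigeonholing (Lemma~\ref{lemma:tower}), the constant-Jacobian normalization, and the estimates already proved in \S\ref{section:inflation}.
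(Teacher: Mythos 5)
Your argument is correct and follows essentially the same route as the paper: the paper likewise applies the tower/pigeonhole construction to the pair $(E,G)$, uses the hypothesis $T\chi_{E'}\ge\beta'$ on $G$ to produce fibers of measure $\sim\beta'$ landing in $E'$, and then invokes the inflation bound (Lemma~\ref{lemma:imagebound}) to get $|E'|^{d-1}\gtrsim\big(\scriptt(E,G)|E|^{-1}\big)\,\beta'^{\,d}$. The only difference is cosmetic: where the paper cites Lemma~\ref{lemma:imagebound} directly (absorbing the $T$ versus $T^*$ sign by a harmless reflection), you rerun the inflation argument for the map $(s,t)\mapsto(s+t,s\cdot t)$ and verify its Jacobian is $\det(t_1-s,\dots,t_{d-1}-s)$, which is a correct and adequate substitute.
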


A more symmetric variant is as follows: If in addition
$T(\chi_E)(x)\ge\beta$ for all $x\in G$, then
$|E'|\ge c \beta^{1/(d-1)}{\beta'}^{d/(d-1)}$.

\begin{comment}
Raising the endpoint restricted weak type bounds already established for $\scriptt(E,G)$
and $\scriptt(E',G)$ to the appropriate powers and multiplying yields
a bound with different exponents.
\begin{equation} \label{productbound2}
\scriptt(E,F)^{1/(d-1)}
\scriptt(E',F)^{d/(d-1)}
\le C
|E|^{d/(d^2-1)}\cdot|E'|^{d^2/(d^2-1)}\cdot|F|^{d/(d-1)}.
\end{equation}
If $|E|\ll |E'|$ then \eqref{productbound1} is superior to \eqref{productbound2}.
This improvement, together with a (perhaps nonstandard) real interpolation argument,
leads directly to the strong type inequality, as will be shown below.
\end{comment}

\begin{proof}[Proof of Lemma~\ref{lemma:trulymultilinear}]
The proof of Lemma~\ref{lemma:tower} yields the following variant.
There exist a point $\bar x\in E$, a measurable set $\Omega_1\subset\reals^{d-1}$,
and a measurable set $\Omega\subset\Omega_1\times \reals^{d-1}$
such that 
\begin{gather}
|\Omega_1|= c\scriptt(E,G)|E|^{-1}
\\
\bar x-(s,|s|^2)\in G
\text{ for each } s\in\Omega_1
\\
|\set{t: (s,t)\in\Omega}| = c\beta' \ \text{ for each } s\in \Omega_1
\\
\bar x-(s,|s|^2)+(t,|t|^2)\in E'
\text{ for each } (s,t)\in \Omega.
\end{gather}
Lemma~\ref{lemma:imagebound} now directly yields the bound \eqref{productbound1}.
\end{proof}

\section{The strong type and Lorentz space inequalities}

Although the strong type $(\frac{d+1}{d},d+1)$ inequality is already known, 
we next show how it can be deduced from an extension of the
above proof of the restricted weak type bound.
This argument will be the basis for our proofs 
of Theorems~\ref{thm:functions} and \ref{thm:Lorentz}. 

Write $p=q=\frac{d+1}{d}$ and
consider functions $f,g\in L^p,L^q$.
By sacrificing a bounded factor we may take
$f = \sum_k 2^k\chi_{E_k}$
and
$g = \sum_j 2^j\chi_{F_j}$
where the sets $E_k$ are pairwise disjoint
and the sets $F_j$  are likewise pairwise disjoint,
and $j,k$ range independently over subsets of $\integers$.
The simple bound for $\sum_{j,k}2^j2^k\scriptt(E_k,F_j)$
obtained directly from the restricted weak type bound 
does not suffice, because a single set $F_j$
could conceivably interact strongly with many $E_k$, in
the sense that $\scriptt(E_k,F_j)\gtrsim |E_k|^{1/p}|F_j|^{1/q}$,
and vice versa. The main idea is to show that this can happen only
in a trivial and harmless way.

Consider first the case of a single index $j$; this amounts
to a weak type $(p,q')$ estimate.
Let $\eps,\eta\in(0,\tfrac12]$
be arbitrary. Suppose that $\sum_k 2^{kp}|E_k|=1$, and that 
\begin{equation}|E_k|\sim \eta 2^{-kp} \text{ for all } k.\end{equation}
Suppose further that 
\begin{equation}
\scriptt(E_k,F)\sim \eps|E_k|^{1/p}|F|^{1/q}
\text{ for all } k.
\end{equation}
Then the number $M$ of indices $k$ is finite, and $M\eta\lesssim 1$.
We suppose that $|k-l|\ge A\log(1/\eps)$ for any two distinct
indices appearing in the sum, where $A$ is a sufficiently large positive constant,
to be specified later in the proof. This will cost a factor of
$C_A\log(1/\eps)$, which will be dealt with below.

Define 
\begin{equation}
G_k=\{x\in F: T\chi_{E_k}(x)\ge c_0\eps |E_k|^{1/p}|F|^{1/q}\cdot|F|^{-1}\},
\end{equation}
where $c_0>0$ is a constant.
If $c_0$ is chosen to be sufficiently small then
$\scriptt(E_k,F\setminus G_k)\le\tfrac12\scriptt(E_k,F)$,
so 
\begin{equation}
\scriptt(E_k,G_k)\sim\scriptt(E_k,F).
\end{equation}
Since $\scriptt(E_k,G_k)\lesssim |E_k|^{1/p}|G_k|^{1/q}$,
this implies that 
\begin{equation}|G_k|\gtrsim \eps^q|F|. \end{equation}

A useful bound is obtained by considering $|F|^{-1}\sum_k|G_k|
= |F|^{-1}\int_F \sum_k\chi_{G_k}$. By H\"older's inequality,
\begin{equation}
\begin{split}
(|F|^{-1}\sum_k|G_k|)^2
&\le |F|^{-1}\int_F (\sum_k\chi_{G_k})^2
\\
&\le |F|^{-1}\sum_k|G_k|\
+ |F|^{-1}\sum_{k\ne l}|G_k\cap G_l|.
\end{split}
\end{equation}
Therefore either $\sum_k|G_k|\lesssim |F|$,
or $ (|F|^{-1}\sum_k|G_k|)^2
\lesssim |F|^{-1} \sum_{k\ne l}|G_k\cap G_l|$.

Let $N$ be the number of indices $k$.
In the second case of this dichotomy,
since $|G_k| \gtrsim \eps^q|F|$,
we conclude that 
\begin{equation}
(N\eps^q)^2
\lesssim (|F|^{-1}\sum_k|G_k|)^2
\lesssim N^2|F|^{-1}\max_{k\ne l}|G_k\cap G_l|,
\end{equation}
so there exists a pair $k\ne l$ such that 
\begin{equation}\label{overlap}|G_k\cap G_l|\gtrsim \eps^{2q}|F|.\end{equation}
We have now arrived at the key step of the proof of the strong type inequality;
we claim that \eqref{overlap} cannot hold for $k\ne l$.
From this it would follow that
\begin{equation}\sum_k|G_k|\lesssim |F|.\end{equation}
The interpretation is that while many sets $E_k$
can interact $\eps$-strongly with a single set $F$
for small $\eps$, they can do so only in a trivial way,
by interacting with essentially pairwise disjoint subsets of $F$.

\begin{proof}[Proof of Claim]
Apply Lemma~\ref{lemma:trulymultilinear}
with $E=E_k$, $E'=E_l$, $G = G_k\cap G_l$,
and $\beta\sim \eps |E'|^{d/(d+1)}|F|^{d/(d+1)}|F|^{-1}$;
we have inserted the relevant values $p=q=\frac{d+1}d$ of the exponents.
Since $T(\chi_E)\ge c\eps|E|^{d/(d+1)}
|F|^{d/(d+1)}|F|^{-1}$
at each point of $G_k\supset G$,
there is the lower bound 
\begin{equation*}
\scriptt(E,G)\gtrsim \eps |E|^{d/(d+1)}
|F|^{d/(d+1)}|F|^{-1}|G|. 
\end{equation*}
The lemma thus yields
\begin{multline*}
|E'|\gtrsim
\Big( \eps |E|^{d/(d+1)} |F|^{d/(d+1)}|F|^{-1}|G||E|^{-1}\Big)^{1/(d-1)}
\\
\cdot \Big( \eps|E'|^{d/(d+1)} |F|^{d/(d+1)} |F|^{-1}  \Big)^{d/(d-1)}.
\end{multline*}
Since $|G|\gtrsim \eps^{2d/(d+1)}|F|$, this implies that
\begin{equation*}
|E'|^{d-1} \gtrsim
\Big( \eps |E|^{-1/(d+1)} |F|^{d/(d+1)} \eps^{2d/(d+1)}  \Big)
\Big( \eps |E'|^{d/(d+1)} |F|^{-1/(d+1)}  \Big)^d.
\end{equation*}
This is equivalent, via a bit of algebra, to
\begin{equation}\label{viaabitofalgebra}
|E'|\le C\eps^{-B}|E|
\end{equation}
for a certain positive exponent $B$.
Since $|E|=|E_k|\sim \eta 2^{-kp}$
and $|E'|=|E_l|\sim \eta 2^{-lp}$,
this last inequality is equivalent to
$2^{-lp}\le C\eps^{-B}2^{-kp}$,
whence  $l\ge k-C'\log(\eps^{-1})$
for a certain finite constant $C'$.
The situation is symmetric in the indices $k,l$,
so the reversed bound also holds.
This contradicts the assumption that $|k-l|\ge A\log(\eps^{-1})$,
provided that the constant $A$ is  chosen to be sufficiently
large at the beginning of the proof.
\end{proof}

Let $q',p'$ be
the exponents conjugate to $q,p$. Then by H\"older's inequality,
\begin{multline*}
\sum_k 2^k\scriptt(E_k,F)
\sim \sum_k 2^k\scriptt(E_k,G_k)
\\
\lesssim \big( \sum_k 2^{kq'}|E_k|^{q'/p} \big)^{1/q'}
(\sum_k|G_k|)^{1/q}
\\
\lesssim \max_k(2^{kp}|E_k|)^\gamma |F|^{1/q}
\lesssim \eta^\gamma |F|^{1/q}
\end{multline*}
for a certain exponent $\gamma$ which is strictly $>0$,
because $\frac1p+\frac1q>1$. We've invoked the
normalization $\sum_k 2^{kp}|E_k|=1$.

An alternative bound is also available.
The number $M$ of indices $k$ in the sum satisfies 
$M\sim \eta^{-1}$, so
\begin{equation}
\begin{split}
\sum_k 2^k\scriptt(E_k,F)
&\sim \sum_k 2^k\eps|E_k|^{1/p}|F|^{1/q}
\\
&\lesssim \eps M\eta^{1/p}|F|^{1/q}
= \eps \eta^{-r}|F|^{1/q}
\end{split}
\end{equation}
where $r = 1-p^{-1}$ is positive.

If the restriction that $|k-l|\ge A\log(1/\eps)$ for distinct
indices $k,l$ is now dropped,
but the normalizations involving $\eta,\eps$ are retained,
then we conclude that 
$\langle Tf,\chi_F\rangle
\lesssim \log(1/\eps)\min(\eta^\gamma, \eps \eta^{-r})|F|^{1/q}$
for certain positive, finite exponents $\gamma,r$.
Therefore
\begin{equation}
\langle Tf,\chi_F\rangle
\lesssim \min(\eps^a,\eta^b)\|f\|_{L^p}|F|^{1/q}
\end{equation}
for certain positive exponents $a,b$,
for all $f,F$ subject to the normalizations involving $\eps,\eta$.
This in turn implies that
\begin{equation} \label{penultimate}
\langle Tf,\chi_F\rangle
\le C\eps^a\|f\|_{L^p}|F|^{1/q}
\end{equation}
for all $f,F$, subject only to the normalization involving $\eps$.
Summing one more series yields the weak type bound
$C\|f\|_{L^p}|F|^{1/q}$ for arbitrary $f,F$;
but \eqref{penultimate} will be used below.

It is now a simple matter to repeat this argument to pass from
the weak type $(p,q')$ inequality to the corresponding strong type
inequality.
Let $g=\sum_j 2^j\chi_{F_j}$,
let $f = \sum_k 2^k \chi_{E_k}$,
and assume that $\|f\|_{L^p}=\|g\|_{L^q}=1$.
Let $\eps,\eta \in(0,\tfrac12]$.
Suppose that $|E_k|\sim\eta 2^{-kp}$ for all indices $k$
for which $|E_k|>0$; drop all other indices $k$.
Consider $\sum_{j,k}^* 2^j2^k\scriptt(E_k,F_j)$,
where a $*$ indicates that a sum is taken only
$j$, $k$, or pairs $(j,k)$ such that 
$\scriptt(E_k,F_j)\sim \eps |E_k|^{1/p}|F_j|^{1/q}$.
At the expense of a factor $\lesssim\log(\eps^{-1})$
we may assume that $|k_1-k_2|\ge A\log(\eps^{-1})$ for
all distinct indices $k_1,k_2$ in the sum representing $f$.

Just as above, to each pair $(j,k)$ is associated a set $G_{j,k}\subset F_j$,
such that $\scriptt(E_k,F_j)\sim \scriptt(E_k,G_{j,k})$
and $\sum^*_k |G_{j,k}|\lesssim |F_j|$.
Then
\begin{align*}
\sum^*_{j,k}2^j2^k\scriptt(E_k,F_j)
&\lesssim 
\sum^*_{j,k}2^j2^k\scriptt(E_k,G_{j,k})
\\
&= \sum_k 2^k \langle T( \chi_{E_k}), \sum^*_j 2^j \chi_{G_{j,k}} \rangle
\\
&\lesssim 
\sum_k 2^k |E_k|^{1/p} 
(\sum^*_j 2^{jq}|G_{j,k}|)^{1/q}.
\end{align*}
To obtain the last line we have invoked the weak type inequality
established above, for the transpose of $T$, which is the same as $T$. 
By H\"older's inequality 
and the bound $\sum^*_k |G_{j,k}|\lesssim|F_j|$
this last line is
\begin{equation}
\label{lorentzimprovementkey}
\begin{split}
\lesssim 
(\sum_k 2^{kq'}|E_k|^{q'/p})^{1/q'} 
&(\sum_k \sum^*_j 2^{jq}|G_{j,k}|)^{1/q}
\\
&\lesssim
\eta^\gamma 
( \sum_j 2^{jq}|F_{j}|)^{1/q}
\lesssim\eta^\gamma.
\end{split}
\end{equation}

On the other hand, if $M$ is the number of indices $k$ then
by applying \eqref{penultimate} to the transpose operator we conclude that
\begin{equation}
\begin{split}
\sum_{j,k}^* 2^j 2^k \scriptt(E_k,F_j)
&\lesssim \eps^a 
\sum_k 2^k|E_k|^{1/p}(\sum_j 2^{jq}|F_j|)^{1/q}
\\
&\lesssim M\eps^a \eta^{1/p}
= \eps^a\eta^{-r}.
\end{split}
\end{equation}
As in the proof of the weak-type bound, summation over dyadic values
of $\eps$ and $\eta$ leads to the desired strong type inequality.
\qed

\begin{proof}[Proof of Theorem~\ref{thm:Lorentz}] 
The Lorentz space bound 
is implicit in the above argument. The dual of $L^{d+1,r}$
is $L^{(d+1)/d, r'}$ where $r'=r/(r-1)$. Thus in the first
factor of the first line of \eqref{lorentzimprovementkey},
one has control over $\sum_k 2^{kr'}|E_k|^{r'/p}$.
A positive power of $\eta$ is therefore obtained
in the second line of \eqref{lorentzimprovementkey}
provided that $q'>r'$. Here $q=(d+1)/d$,
so $q'>r'$ is equivalent to $r>(d+1)/d$.
The only other difference 
is that $M$ is now majorized by a different power of $\eta$,
but all that is needed in the argument is some negative power.
\end{proof}

For the characterization of quasiextremals, we need the following
more quantitative form of the strong type inequality,
which was implicitly established in the course of the proof.
\begin{lemma} \label{lemma:strongtypewithgain}
There exist $\gamma>0$ and $C<\infty$ with the following property.
Let $f=\sum_{k\in\integers} 2^k\chi_{E_k}$
and $f^\star=\sum_{l\in\integers} 2^l\chi_{F_l}$,
where $\{E_k\}$ are pairwise disjoint, and likewise $\{F_l\}$
are pairwise disjoint.
If $2^l|F_l|^{d/(d+1)} \le\eta\|f^\star\|_{L^{(d+1)/d}}$
for all $l$ then 
\begin{equation} 
\langle Tf,f^\star\rangle \le C\eta^\gamma 
\|f\|_{L^{(d+1)/d}}
\|f^\star\|_{L^{(d+1)/d}}.
\end{equation}
\end{lemma}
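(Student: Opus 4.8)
The plan is to extract this statement from the proof of the strong type inequality that was just carried out, paying attention to how the parameter $\eta$ enters. Recall that in that argument one normalizes $\|f\|_{(d+1)/d}=\|f^\star\|_{(d+1)/d}=1$, decomposes $f=\sum_k 2^k\chi_{E_k}$ and $f^\star=\sum_l 2^l\chi_{F_l}$, and for each dyadic pair of thresholds $(\eps,\eta_0)$ restricts attention to the sub-sum over indices for which $|E_k|\sim\eta_0 2^{-kp}$ and $\scriptt(E_k,F_l)\sim\eps|E_k|^{1/p}|F_l|^{1/q}$. The key output there was the bound $\sum^*_{j,k}2^j2^k\scriptt(E_k,F_j)\lesssim\min(\eta_0^\gamma,\eps^a\eta_0^{-r})\log(\eps^{-1})$ for positive exponents $\gamma,a,r$, which one then sums over dyadic $\eps,\eta_0$. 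The point of the present lemma is that the hypothesis $2^l|F_l|^{d/(d+1)}\le\eta\|f^\star\|_{(d+1)/d}$ is a \emph{one-sided} smallness condition on $f^\star$ rather than on $f$, so we should run the symmetric version: decompose $f^\star$ according to the size of $2^l|F_l|^{d/(d+1)}$, not $f$ according to $|E_k|$. Here the roles of $f$ and $f^\star$ are interchanged, which is legitimate since $T$ equals its own transpose up to a harmless reflection.

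First I would normalize $\|f\|_{(d+1)/d}=\|f^\star\|_{(d+1)/d}=1$, so the hypothesis reads $2^l|F_l|^{d/(d+1)}\le\eta$ for all $l$. Write $2^l|F_l|^{1/q}=\eta_l$ with $\eta_l\le\eta$; these are the ``column masses'' of $f^\star$, and $\sum_l\eta_l^q=\|f^\star\|_q^q=1$. Now I would re-run the weak-type/strong-type argument with the decomposition organized by the $F_l$'s: group the indices $l$ into dyadic blocks $\eta_l\sim\sigma$ for $\sigma\le\eta$ dyadic, and within each block further decompose the pairs $(k,l)$ by $\scriptt(E_k,F_l)\sim\eps|E_k|^{1/p}|F_l|^{1/q}$. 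The sets $G_{k,l}\subset F_l$ (on which $T\chi_{E_k}$ is large) are defined exactly as before, the overlap estimate $\sum^*_l|G_{k,l}|\lesssim|E_k|$ holds by the same application of Lemma~\ref{lemma:trulymultilinear} with the roles of $E,E'$ swapped, and H\"older's inequality then gives $\sum^*_{k,l}2^k2^l\scriptt(E_k,F_l)\lesssim\sigma^\gamma$ from the gain built into the exponents (since $\tfrac1p+\tfrac1q>1$), while the crude counting bound gives $\lesssim\eps^a\sigma^{-r}\cdot(\text{number of blocks})$. The essential new observation is that, with the normalization $\sum_l\eta_l^q=1$, the total contribution of all blocks with $\eta_l\sim\sigma$ is already weighted by a factor $\sigma$-worth of mass, and summing $\min(\sigma^\gamma,\eps^a\sigma^{-r})$ over dyadic $\sigma\le\eta$ and dyadic $\eps$ produces $C\eta^{\gamma'}$ for some $\gamma'>0$ in place of the $O(1)$ one got before without the restriction $\eta_l\le\eta$. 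One then takes $\gamma$ in the statement to be this $\gamma'$.

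The main obstacle, and the step to be careful about, is bookkeeping the interaction between the two dyadic parameters so that the small parameter $\eta$ (bounding all the $\eta_l$) actually survives the double summation as a genuine positive power, rather than being absorbed into the constant. Concretely: in the original proof $\eta_0$ ranged over \emph{all} dyadic values $\le\tfrac12$ and the sum $\sum_{\eta_0}\min(\eta_0^\gamma,\eps^a\eta_0^{-r})$ converged to $O(\eps^{a'})$; here the analogous sum is $\sum_{\sigma\le\eta}\min(\sigma^\gamma,\ldots)\lesssim\eta^\gamma$ precisely because the range of $\sigma$ is truncated at $\eta$ and the dominant term near the top of the range is $\sigma^\gamma$. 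One must check that the ``number of blocks'' factor (the count of $l$ with $\eta_l\sim\sigma$, which is $\lesssim\sigma^{-q}$, and the count of $k$) only contributes further negative powers of $\sigma$ and $\eps$ that are dominated by the gains, exactly as in the unrestricted argument — the text already notes ``all that is needed in the argument is some negative power.'' Once this accounting is done the conclusion follows by the same dyadic summation over $\eps$ (costing the harmless $\log(\eps^{-1})$, then summed), yielding $\langle Tf,f^\star\rangle\le C\eta^\gamma\|f\|_{(d+1)/d}\|f^\star\|_{(d+1)/d}$ after undoing the normalization.
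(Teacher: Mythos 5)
Your proposal is correct and is essentially the paper's own proof: the paper gives no separate argument for this lemma, asserting only that it was implicitly established in the preceding proof of the strong type inequality, and your reconstruction --- pigeonhole the level masses $2^l|F_l|^{d/(d+1)}$ of $f^\star$ (the side carrying the hypothesis, hence the side from which the H\"older gain $\max_l(2^{lp}|F_l|)^\gamma$ is extracted), keep the $\eps$-pigeonholing of the interactions and the index separation at cost $\log(1/\eps)$, and note that the dyadic sum over level-mass values is now truncated at $\eta$, so the final double summation yields $C\eta^\gamma$ rather than $O(1)$ --- is exactly that extraction. One bookkeeping slip should be fixed: you declare $G_{k,l}\subset F_l$ (where $T\chi_{E_k}$ is large) but then use the disjointness bound $\sum_l^*|G_{k,l}|\lesssim |E_k|$; in the swapped argument the auxiliary sets must live inside $E_k$, namely $G_{k,l}=\{x\in E_k:\ T^t\chi_{F_l}(x)\ge c_0\eps\,\scriptt(F_l,E_k)/|E_k|\}$, and Lemma~\ref{lemma:trulymultilinear} (with the roles swapped, as you say) shows that two indices $l\ne l'$ whose sets $G_{k,l},G_{k,l'}$ overlap substantially inside $E_k$ must have $|F_l|\le C\eps^{-B}|F_{l'}|$ and conversely, contradicting the separation of the pigeonholed indices $l$ and giving $\sum_l^*|G_{k,l}|\lesssim|E_k|$, which is what the H\"older step requires; with that orientation corrected, your argument goes through as written.
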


We also digress to record the following lemma, whose proof is
implicit in the above derivation of \eqref{viaabitofalgebra}.
\begin{lemma} \label{lemma:comparablemeasures}
For any $d\ge 2$ there exist $C,C'<\infty$ with the following
property.
Let $E,E',F\subset\reals^d$ be measurable sets with 
positive, finite measures.
Let $\eta>0$.
If 
$T\chi_E(x)\ge \eta |E|^{d/(d+1)}|F|^{-1/(d+1)}$
and
$T\chi_{E'}(x)\ge \eta |E'|^{d/(d+1)}|F|^{-1/(d+1)}$
for every $x\in F$,
then $|E'|\le C\eta^{-C}|E|$.
\end{lemma}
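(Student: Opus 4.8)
The plan is to derive Lemma~\ref{lemma:comparablemeasures} directly from the trilinear inequality \eqref{productbound1}, taking the auxiliary set $G$ there to be $F$. The only extra input needed is a lower bound for $\scriptt(E,F)$, and this comes for free from the pointwise hypothesis on $T\chi_E$.

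First I would integrate the hypothesis $T\chi_E(x)\ge \eta|E|^{d/(d+1)}|F|^{-1/(d+1)}$ over $x\in F$ to get $\scriptt(E,F)=\int_F T\chi_E \ge \eta|E|^{d/(d+1)}|F|^{d/(d+1)}$, hence $\scriptt(E,F)|E|^{-1}\ge \eta|E|^{-1/(d+1)}|F|^{d/(d+1)}$. Next, since $T\chi_{E'}(x)\ge \beta':=\eta|E'|^{d/(d+1)}|F|^{-1/(d+1)}$ for every $x\in F$, Lemma~\ref{lemma:trulymultilinear} applies with $G=F$ and this value of $\beta'$, and \eqref{productbound1} yields
\[
\big(\eta|E|^{-1/(d+1)}|F|^{d/(d+1)}\big)^{1/(d-1)}\big(\eta|E'|^{d/(d+1)}|F|^{-1/(d+1)}\big)^{d/(d-1)}\le C|E'|.
\]

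The proof then finishes with routine bookkeeping. The two powers of $|F|$ cancel exactly, since $\tfrac{1}{d-1}\cdot\tfrac{d}{d+1}=\tfrac{d}{d-1}\cdot\tfrac{1}{d+1}$, so the displayed inequality becomes $\eta^{(d+1)/(d-1)}|E|^{-1/((d+1)(d-1))}|E'|^{d^2/((d+1)(d-1))}\le C|E'|$. Because $\tfrac{d^2}{(d+1)(d-1)}-1=\tfrac{1}{(d+1)(d-1)}$, this is equivalent to $|E'|^{1/((d+1)(d-1))}\le C\eta^{-(d+1)/(d-1)}|E|^{1/((d+1)(d-1))}$, and raising to the power $(d+1)(d-1)$ gives $|E'|\le C^{d^2-1}\eta^{-(d+1)^2}|E|$, which is the asserted bound $|E'|\le C\eta^{-C}|E|$. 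Interchanging the roles of $E$ and $E'$ gives the reverse inequality as well, so the two measures are in fact comparable up to the factor $\eta^{-(d+1)^2}$, justifying the name.

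There is no serious obstacle; the only point deserving care is the exact cancellation of $|F|$, which occurs precisely because the exponents $\tfrac{d}{d+1}$ and $\tfrac{1}{d+1}$ in the two hypotheses form the dual pair matching the homogeneity built into \eqref{productbound1}. Under any other normalization a residual power of $|F|$ would survive and the conclusion, as stated without reference to $|F|$, would be false — so the statement is as much a reflection of the scaling of $T$ as of Lemma~\ref{lemma:trulymultilinear} itself.
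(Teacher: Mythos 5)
Your proof is correct and is essentially the paper's own argument: the paper states that the lemma is implicit in the derivation of \eqref{viaabitofalgebra}, which is exactly this application of Lemma~\ref{lemma:trulymultilinear} with the pointwise lower bounds on $T\chi_E$ and $T\chi_{E'}$ feeding into \eqref{productbound1} (there with $G$ a large subset of $F$, here simply $G=F$), followed by the same exponent bookkeeping. Your algebra, including the exact cancellation of the $|F|$ powers and the resulting bound $|E'|\le C\eta^{-(d+1)^2}|E|$, checks out.
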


\section{Quasiextremals for the strong type inequality}

\begin{proof}[Proof of Theorem~\ref{thm:functions}]
Let $f,f^\star$ be any nonnegative measurable functions which
are finite almost everywhere. There exist measurable sets $E_k,F_l$
as in Lemma~\ref{lemma:strongtypewithgain}
such that 
$\tfrac12 f\le \sum_{k\in\integers} 2^k\chi_{E_k}\le f$
and
$\tfrac12 f^\star\le \sum_{l\in\integers} 2^l\chi_{E_l}\le f^\star$.

Unless (with the above notation)
$\sup_l 2^l|F_l|^{d/(d+1}\gtrsim \eps^C\|f^\star\|_{(d+1)/d}$,
Lemma~\ref{lemma:strongtypewithgain} implies that 
$|\langle Tf^\star,f\rangle| \ll\eps\|f\|_{(d+1)/d}\|f^\star\|_{(d+1)/d}$, 
contradicting the hypothesis that $(f,f^\star)$ is $\eps$-quasiextremal.
In the same way it follows that $\sup_k 2^k|E_k|^{d/(d+1)}
\gtrsim \eps^C\|f\|_{(d+1)/d}$.
All sets $E_k,F_l$ not satisfying these inequalities can be discarded.
If none of the remaining pairs $(E_k,F_l)$ were $c\eps^C$-quasiextremal,
then the above reasoning would again imply
$|\langle Tf^\star,f\rangle| \ll\eps\|f\|_{(d+1)/d}\|f^\star\|_{(d+1)/d}$, 
a contradiction.
\end{proof}

This line of argument, leading from a restricted weak type inequality
to a strong type inequality, is rather general. 
See \cite{betsy} for a related application.

\section{Sketch of proof of Theorem~\ref{thm:single}}
In part (i), the first conclusion is a weakening of Theorem~\ref{maintheorem}.
On the other hand,
if $E\subset B^\star = \pi^\star(\scriptb)$
where $\scriptb=\scriptb(\bar z,\earrow,r,r^*)$
then $B=\pi(\scriptb(\bar z,\earrow, Cr,Cr^*))$ satisfies
$|B|\sim|\pi(\scriptb)|$ and
$T^*(\chi_B) \ge c\prod_{j=1}^{d-1} r_j$ at every point of $B^\star\supset E$,
provided that the constants $C$ and $c$ are chosen to be sufficiently
large and small respectively, but independent of $r$.
The stated converse follows from a simple calculation using
the relations $r_jr_j^\star=\rho$ and the definitions of $B,B^\star$.

From the Lorentz space inequality of Theorem~\ref{thm:Lorentz}
and interpolation it follows that $T$ maps $L^{(d+1)/d, \delta+(d+1)/d}$
to $L^{d+1,d+1-\delta}$ for some $\delta>0$.
It follows easily that if $f$ is decomposed as $\sum_{j} 2^j f_j$
where the summands have disjoint supports $E_j$
and satisfy $\chi_{E_j} \le f_j\le 2\chi_{E_j}$ for all $j$,
then there exists $J$ such that $\norm{2^J f_J}_{(d+1)/d}
\ge c\eps^C\norm{f}_{(d+1)/d}$,
and $E_J$ is $c\eps^C$--quasiextremal for the restricted weak type inequality.
Part (i) then gives the stated conclusion for $r=2^J$
and $E=E_J$.

To prove (iii), let $J$ be as in the preceding paragraph and $\scriptb$
be as in the conclusion (ii), and decompose $f = 2^J f_J\cdot\chi_{B^\star} + h$,
where $B^\star = \pi^\star(\scriptb)$.
Since the two summands $h, f_J\chi_{B^\star}$ have disjoint supports,
$\norm{h}_{(d+1)/d}\le (1-c\eps^C)\norm{f}_{(d+1)/d}$.
Let $\Psi = \Psi_{\scriptb}$ be as in the definition of an $\eps$--bump function
associated to $\scriptb$, and consider $F = f_J\chi_{B^\star}\circ\Psi^{-1}$.
Then $F$ is supported on $Q_0$, $\norm{F}_{L^\infty}\le 2$,
and the support of $F$ has measure $\ge c\eps^C$.
Split $F$ as $F = F_{\text{high}}+F_{\text{low}}$ 
into a high-frequency and a low-frequency component,
with the cutoff around frequencies of order of magnitude $\eps^{-A}$.
Using the fact that $T$ is smoothing of positive order in the scale of $L^2$ Sobolev
spaces, it follows readily that if $A$ is chosen to be sufficiently
large, independent of $\eps$, then 
$\norm{T(F_{\text{high}})}_{d+1}$ is small relative to $\norm{T(F)}_{d+1}$.
For the $L^\infty$ norm and support control on $F$ imply similar
control on $F_{\text{high}}$, whence follows an $L^\infty$
bound for 
$T(F_{\text{high}})$ 
which is uniform in $A\ge 1$;
the smoothing property implies an $\lt$ bound for
$T(F_{\text{high}})$ 
which tends to zero as $A\to\infty$;
so interpolation yields a favorable $L^{d+1}$ bound for large $A$.
Multiplying $F_{\text{low}}$ by a suitable spatial cutoff function 
supported in $\pi^\star(\scriptb(\bar z,\earrow, C\eps^{-C}r, C\eps^{-C}r^\star))$
yields a $c\eps^C$--bump function, up to a uniformly bounded constant factor,
with a further remainder term which is again negligible.

Details are left to the dedicated reader.

\section{Verification of Proposition~\ref{prop:trulyQE}}
\label{section:trulyQE}

Let $z=(\bar x,\bar y)\in \scripti$, let $\rho>0$, let $r_jr_j^\star=\rho$
for $j\in\{1,2,\cdots,d-1\}$,
and let $\earrow$ be an orthonormal basis for $\reals^{d-1}$;
all of these parameters are otherwise arbitrary.
We claim that
$\scriptb=\scriptb(\bar z,\earrow,r,r^{\star})$ 
and its projections satisfy
\begin{gather}
|\scriptb|
\gtrsim \rho^{d},
\qquad
|\pi(\scriptb)| \lesssim \rho\prod_{j=1}^{d-1}r_j,
\qquad
|\pi^{\star}(\scriptb)| \lesssim \rho\prod_{j=1}^{d-1}r^{\star}_j
\\
\intertext{whence}
\frac{|\scriptb|}{
|\pi(\scriptb)|^{d/(d+1)}
|\pi^{\star}(\scriptb)|^{d/(d+1)}
}
\gtrsim 1
\end{gather}
uniformly in all these parameters;
thus $(\pi(\scriptb),\pi^\star(\scriptb))$
is a $c_0$-quasiextremal for some constant $c_0$ independent of all parameters.

\begin{proof}
The upper bounds $|\pi(\scriptb)|,|\pi^\star(\scriptb)|$
follow directly from the definition of $\scriptb$,
which is defined to be the intersection of $\scripti$
with a certain Cartesian product $E\times E^\star$.
What must be verified is the lower bound for $|\scriptb|$.
 
Fix a small constant $\eps>0$.
Without loss of generality, we may suppose that $\earrow$
is the standard basis for $\reals^{d-1}$, so that
points $(x,y)\in\scriptb$ satisfy
$|x_j-\bar x_j|<r_j$ and 
$|y_j-\bar y_j|<r^\star_j$ 
for all $j\in\{1,2,\cdots,d-1\}$.
Define $E_\eps$ to be the set of all 
$x=(x',x_d)\in\reals^{d-1}\times\reals$ satisfying
\begin{align*}
&|x'_j-\bar x'_j|<\eps r_j
\text{ for all $j\in\{1,2,\cdots,d-1\}$}
\\
% \intertext{and}
& \big| x_d-\bar y_d - |x'-\bar y'|^2 \big| <\eps\rho. 
\end{align*}
Then $|E_\eps|\gtrsim \eps^d \rho\prod_{j=1}^{d-1}r_j$.

We will show that if $\eps$ is chosen to be sufficiently small
but independent of $z,r_j,r_j^\star,\rho,\earrow$, then
for any $x\in E_\eps$, the set of all $y'\in\reals^{d-1}$
for which there exists $y_d\in\reals$ such that $(x,(y',y_d))\in\scriptb$
has measure $\gtrsim \prod_{j=1}^{d-1}r_j^\star$. 
Since the mapping $\scripti\owns (x,y)\mapsto (x,y')\in\reals^d\times\reals^{d-1}$
is a diffeomorphism, this together with the 
lower bound for $|E_\eps|$ and the identities $r_jr_j^\star\equiv\rho$
implies the required lower bound on $|\scriptb|$.

Let $y\in\reals^{d-1}$
satisfy $|y'_j-\bar y'_j|<r_j^\star$ for all $j\le d-1$, and define
$y_d-x_d=-|y'-x'|^2$, so that $(x,y)\in\scripti$.
Then 
\[
(x,y)\in\scriptb
\ \text{ if and only if }\ 
\big| y_d-\bar x_d + |y'-\bar x'|^2 \big|<\rho,
\]
and we aim to show that this last inequality is satisfied.
One has
\begin{align*}
y_d-\bar x_d &+ |y'-\bar x'|^2 
\\
&=
(x_d - |y'-x'|^2) -\bar x_d + |y'-\bar x'|^2 
\\
&=
\big(\bar y_d + |x'-\bar y'|^2 +O(\eps\rho)\big)
- |y'-x'|^2 -\bar x_d + |y'-\bar x'|^2 
\\
&=
\bar y_d -\bar x_d
+ |x'-\bar y'|^2 
- |y'-x'|^2 
+ |y'-\bar x'|^2 
+O(\eps\rho)
\\
&=
-|\bar y'-\bar x'|^2
+ |x'-\bar y'|^2 
- |y'-x'|^2  
+ |y'-\bar x'|^2 
+O(\eps\rho)
\end{align*}
where ``$O(\eps\rho)$'' signifies a quantity whose
absolute value is at most $\eps\rho$; such quantities
are harmless here.
Substitute $x'=\bar x'+\Delta_x$, $y'=\bar y'+\Delta_y$, 
and
$v=\bar y'-\bar x'$.
Then
\begin{align*}
-|\bar y'-\bar x'|^2
&+ |x'-\bar y'|^2 
- |y'-x'|^2  
+ |y'-\bar x'|^2 
\\
&=
-|v|^2
+ |\Delta_x - v|^2 
- |(\Delta_y-\Delta_x) + v|^2  
+ |\Delta_y+v|^2 
\\
&=
2\langle\Delta_x,\Delta_y\rangle;
\end{align*}
all other terms cancel in pairs after all four 
quantities are squared.
Since 
$|\langle\Delta_x,\Delta_y\rangle|
\le \sum_{j=1}^{d-1} \eps r_jr^\star_j
= (d-1)\eps\rho$, we conclude that
\[
|y_d-\bar x_d + |y'-\bar x'|^2|
\le (2d-1)\eps\rho.
\]
This is $<\rho$ provided that
$\eps$ is chosen to be sufficiently small.
\end{proof}

\begin{remark}
This conclusion could have been obtained by exploiting
symmetries of the problem to reduce the general case to $\bar x=\bar y$;
this boils down to the same algebraic calculations used above.
For instance, writing $x=(x',x_d)$ and $y=(y',y_d)$,
for any $\Delta\in\reals^{d-1}$,
the mappings
\begin{align*}
&(x',x_d;\ y',y_d)
\mapsto (x'+\Delta,x_d;y'+\Delta,y_d)
\\
&(x',x_d;\ y',y_d)
\mapsto
(x'+\Delta, x_d + 2\langle \Delta, x'\rangle+|\Delta|^2;\ 
y',y_d+2\langle\Delta,y'\rangle)
\end{align*}
are each Cartesian products of two measure-preserving transformations
of $\reals^d$, and preserve the incidence manifold $\scripti$.
These symmetries reduce the general case to the case where $z=(\bar x,\bar y)
= (0,x_d;0,x_d)$.
\end{remark}

\section{On subalgebraic structure} \label{section:conjectures}

Consider the general situation of two (small, open) manifolds $X,X^\star$
and a smooth incidence manifold $\scripti\subset X\times X^\star$,
equipped with a nonnegative measure $\sigma$ with a smooth, nonvanishing density.
Assume that the projections $\pi,\pi^\star$ of $\scripti$ onto $X,X^\star$ 
are submersions, and that the two foliations of $\scripti$ defined
by the level sets of $\pi,\pi^\star$ are everywhere transverse.  
Associated to these data is 
$\scriptt(E,E^\star)
=\scriptt_\scripti(E,E^\star)=\sigma({\scripti\cap(E\times E^\star)})$,
the continuum number of incidences between $E$ and $E^\star$.
Assume that there exist some exponents $a,a_\star\in(0,1)$ 
satisfying $a+a_\star>1$ for which there is an $L^p$-improvement inequality 
$\scriptt(E,E^\star)\le C|E|^a|E^\star|^{a_\star}$
uniformly for all measurable sets.
For all $t,t_\star>0$ define
\begin{equation}
\Lambda(t,t_\star) =
\sup_{|E|=t, |E^\star|=t_\star} \scriptt(E,E^\star).
\end{equation}

We say that $\scriptt_\scripti$ {\em has subalgebraic almost-extremals} if 
for every $\delta>0$,
for all sufficiently small positive $t,t_\star$,
there exist sets $E,E^\star$ of measures $t,t_\star$ such that
(i) 
$\scriptt(E,E^\star)\ge c_\delta t^\delta t_\star^\delta \Lambda(t,t_\star)$
and
(ii) $E,E^\star$ are subalgebraic sets of degrees and complexities
bounded above by
quantities depending only on $\delta$, uniformly in $t,t_\star$.
The qualifier ``almost'' refers to the sacrificed factor $t^\delta t_\star^\delta$,
which compensates for an obvious defect: The class of subalgebraic
sets is not compatible with the symmetry group ${\rm Diff(X)}\times{\rm Diff(X^\star)}$
of Cartesian products of diffeomorphisms.

It might seem plausible that
for all $\scriptt_\scripti$ satisfying an $L^p$-improvement inequality,
subalgebraic almost-extremals exist. 
A stronger assertion would be that
any $\eps$-quasiextremal pair has a large subalgebraic subpair.
By this we mean that if $\scriptt(E,E^\star)\ge\eps \Lambda(|E|,|E^\star|)$
then there exist subalgebraic sets $\scripte,\scripte^\star$, 
of uniformly bounded degrees and complexities,
whose measures are comparable to the measures of $E,E^\star$ respectively,
such that 
$\scriptt(E\cap\scripte,E^\star\cap\scripte^\star)\ge c\eps^A|E|^\delta|E^\star|^\delta
\scriptt(E,E^\star)$.
But this stronger assertion is false,
as was shown above in the discussion following
the statement of Theorem~\ref{thm:Lambda}.
That discussion demonstrates it can only
hold for a limited regime of values of $(|E|,|E_\star|)$.
Perhaps a restriction related to the inequality
$\Lambda(|E|,|E^\star)\ll \min(|E|,|E^\star|)$ could be sufficient
to rectify matters in many cases.

It would be desirable to go still further, by describing all
quasiextremals for very general incidence manifolds,
as Theorem~\ref{maintheorem} does for one example.
In certain other contexts, one would like
quasiextremals to correspond to appropriate subalgebraic sets in phase space.

\begin{comment}
The main thrust here is not the precise formulation of the conjectures,
but rather their general spirit; any statements roughly along these
lines would be welcome.
\end{comment}

\begin{remark}
It is informative to consider
Young's convolution inequality
\begin{equation} \label{eq:young}
\Big|\iint_{\reals^{2}}f(x)g(y)h(x-y)\,dx\,dy\Big|
\le C\|f\|_p\|g\|_q\|h\|_r, 
\end{equation}
where $p\rp+q\rp+r\rp=2$,
from this perspective, even though
\eqref{eq:young} is not an inequality of precisely the type under consideration
here, partly because it concerns a trilinear rather than a bilinear form,
but primarily because it lacks an appropriate analogue of the
$L^p$-improving property; natural choices of the associated vector fields 
in the incidence manifold form Abelian Lie algebras.
Let $\delta>0$ be small.
Taking $f,g,h$ to be intervals
of some common length $\delta$, 
centered at the origin,
produces subalgebraic quasiextremals. 
But for large $N$, taking each function to be an $N^{-1}\delta$-neighborhood of
$\{N^{-1}n: n\in\integers \text{ and } |n|\le N\}$
produces equally optimal quasiextremals,
uniformly in $N,\delta$ so long as $0<\delta\le\tfrac14$.
While these sets are subalgebraic, their
complexity tends to infinity with $N$,
provided that $\delta$ and $N$ are coupled so that $\delta\to 0$
as $N\to\infty$.
Thus subalgebraic almost-extremals and even quasiextremals exist,
but it is not true that any quasiextremal 
has a large subalgebraic subpair with appropriate complexity bounds. 
Subalgebraic sets of controlled complexityof controlled complexity are not the appropriate class for such Abelian inequalities. 
\end{remark}

Finite lattices also arise as quasiextremals for the Szemer\'edi-Trotter
inequality concerning incidences between discrete sets of lines and points in $\reals^2$. 

\begin{remark}
There is an analogy with a result in discrete 
combinatorics, in which subalgebraic sets are replaced by finite
arithmetic multiprogressions. 
Let $A,B$ be sets of integers of cardinalities comparable to $k$.
Let $S\subset A\times B$ have cardinality comparable
to $k^2$. Suppose that the cardinality of $\{a+b: (a,b)\in S\}$
is comparable to $k$.
Then there exists a subset $A'\subset A$ of cardinality
comparable to $k$, which is contained in a finite arithmetic
multiprogression of uniformly bounded rank, 
whose cardinality is comparable to $k$.
This is a direct consequence of theorems of Balog-Szemer\'edi and Freiman;
see \cite{nathanson}.
\end{remark}

\begin{comment}
\begin{remark}
Any convex set in $\reals^d$ of finite volume
is contained in a convex set of comparable volume which
is defined by a {\it uniformly 
bounded} number of linear inequalities.  This fact played a role in the proof of
Theorem~\ref{maintheorem}. If some version of such a result 
were available for subalgebraic
sets of arbitrary higher (fixed) degree, it should be helpful
in proving the first conjecture.
\end{remark}
\end{comment}

\end{document}